\newtheorem{thm}{Theorem}[section]
\newtheorem{prop}[thm]{Proposition}
\newtheorem{cor}[thm]{Corollary}
\newtheorem{Def}[thm]{Definition}
\newtheorem{lemma}[thm]{Lemma}
\newtheorem{remark}[thm]{Remark}
\newcommand{\ve}{{\varepsilon}}
\newcommand{\N}{{\mathbb{N}}}
\newcommand{\R}{{\mathbb{R}}}
\numberwithin{equation}{section}
\title{Long time well-posedness and full justification of a Whitham-Green-Naghdi system} 
\author[L. Emerald]{Louis Emerald}
\author[M. Oen Paulsen]{Martin Oen Paulsen}
\address{Department of Mathematics, Nazarbayev University, Nur-Sultan 010000, Kazakhstan}
\email{Louisemerald76@gmail.com}
\address{Department of Mathematics\\ University of Bergen\\ Postbox 7800\\ 5020 Bergen\\ Norway}
\email{Martin.Paulsen@UiB.no}
\date{\today}
\keywords{Fully dispersive Green-Naghdi system; Rigorous justification; bathymetry}
\subjclass[2010]{Primary: 35Q35; Secondary: 76B15}
\begin{document}

    \begin{abstract} 
        We establish the full justification of a \lq\lq Whitham-Green-Naghdi\rq\rq\:  system  modeling the propagation of surface gravity waves with bathymetry in the shallow water regime. It is an asymptotic model of the water waves equations with the same  dispersion relation.  The model under study is a nonlocal quasilinear symmetrizable hyperbolic system without surface tension.   We prove the consistency of the general water waves equations with our system at the order of precision $O(\mu^2 (\ve + \beta))$, where $\mu$ is the shallow water parameter, $\ve$ the nonlinearity parameter, and $\beta$ the topography parameter. Then we prove the long time well-posedness on a time scale $O(\frac{1}{\max\{\ve,\beta\}})$. Lastly, we show the convergence of the solutions of the Whitham-Green-Naghdi system to the ones of the water waves equations on the later time scale.
    \end{abstract}
    \maketitle
    \section{Introduction}
  
    In this article, we study a full dispersion Green-Naghdi system that describes strongly dispersive surface waves over a variable bottom. The system under consideration is described in terms of the unknowns $\zeta$, $v$, and $b$. Here $\zeta(t,x)\in \R$  denotes the surface elevation, $v(t,x) \in \R$ is related to the velocity field described by the full Euler equations, and $b$ is the elevation of the bathymetry. The system reads,
    \begin{align}\label{W-G-N}
    	\begin{cases}
    		\partial_t \zeta + \partial_x(hv) = 0
    		\\ 
    		(h  + \mu h \mathcal{T}[h,\beta  b]) \big{(}\partial_t v + \ve v \partial_x v\big{)} + h \partial_x \zeta +  \mu \varepsilon h(\mathcal{Q}[h,v] + \mathcal{Q}_b[h,b,v]) = 0,
    	\end{cases}
    \end{align}
    where $h = 1+ \ve \zeta - \beta b$ and
    \begin{align}\label{opT}
    	\mathcal{T} [h,\beta b] v
    	& = 
    	-\frac{1}{3h} \partial_x \mathrm{F}^{\frac{1}{2}} \big{(} h^3  \mathrm{F}^{\frac{1}{2}}\partial_x v\big{)} 
    	+
    	 \frac{1}{2h} \big{(} \partial_x \mathrm{F}^{\frac{1}{2}} (h^2 (\beta \partial_x b)v) - h^2(\beta \partial_x b)   \mathrm{F}^{\frac{1}{2}}  \partial_x  v\big{)} 
    	 \\
    	 & \hspace{0.5cm} \notag
    	 +
    	 (\beta \partial_x b)^2 v,
    \end{align}
    and
    \begin{align}\label{Q}
    	\mathcal{Q}[h,v] 
    	& =
    	\frac{2}{3h} \partial_x \mathrm{F}^{\frac{1}{2}}\big{(} h^3  (\mathrm{F}^{\frac{1}{2}}\partial_xv)^2\big{)} 
    	\\
    	\label{B}
    	\mathcal{Q}_b[h,\beta b,v]
    	 & =   
    	h (\mathrm{F}^{\frac{1}{2}} \partial_x  v)^2 (\beta \partial_x b) 
    	+
    	\frac{1}{2h} \partial_x \mathrm{F}^{\frac{1}{2}} (h^2 v^2 \beta \partial_x^2 b) 
    	+
    	v^2(\beta \partial_x^2 b)(\beta \partial_x b),
    \end{align}
    with $\mathrm{F}^{\frac{1}{2}}$ being a Fourier multiplier associated with the dispersion relation of the water waves system. Specifically, if we let $\hat{f}(\xi)$ be the Fourier transform of $f$, then the symbol is defined in frequency by
    \begin{equation}\label{F}
    	\widehat{\mathrm{F}^{\frac{1}{2}} f} (\xi)=  \sqrt{\frac{3}{\mu \xi^2}\Big{(} \frac{\sqrt{\mu}\xi}{\tanh(\sqrt{\mu}\xi)}-1\Big{)}} \hat{f}(\xi).
    \end{equation}
    The parameters $\mu, \ve$, and $\beta$ are defined by the comparison between characteristic quantities of the system under study. Among those are the characteristic water depth $H_0$, the characteristic wave amplitude $a_{s}$, the characteristic bathymetry amplitude $a_b$, and the characteristic wavelength $L$. From these comparisons appear three adimensional parameters of main importance:
    \begin{itemize}
        \item $\mu := \frac{H_0^{2}}{L^{2}}$ is the shallow water parameter,
        \item $\varepsilon  := \frac{a_{\mathrm{s}}}{H_0}$  is the nonlinearity parameter,
        \item $\beta:= \frac{a_b}{H_0}$  is the bathymetry parameter.
    \end{itemize}
    Replacing the Fourier multiplier $\mathrm{F}^{\frac{1}{2}}$ by identity in system \eqref{W-G-N} we retrieve the classical Green-Naghdi system. The later system is proved to be consistent with the water waves equations, in the sense of Definition 5.1 in \cite{WWP}, at the order of precision $O(\mu^2)$ for parameters $(\mu,\varepsilon, \beta)$ in the shallow water regime: 
    \begin{Def}
        Let $\mu_{\max} > 0$, then we define the shallow water regime to be 
            \begin{align*}
                \mathcal{A}_{\mathrm{SW}} := \{ (\mu,\varepsilon,\beta) : \mu \in (0,\mu_{\max}], \varepsilon \in [0,1], \beta \in [0,1] \}.
            \end{align*}
    \end{Def}
    Taking $\ve$ to be zero in \eqref{W-G-N}, we get the linearized water waves equations around the rest state with the following dispersion relation
    \begin{equation}\label{Disp relation WW}
        \omega_{WW}(\xi)^2 = \xi^2\frac{\tanh(\sqrt{\mu}\xi)}{\sqrt{\mu}\xi}. 
    \end{equation}
    This is why we say that system \eqref{W-G-N} is a full dispersion Green-Naghdi model. Moreover, it is proved in the present paper that the water waves equations are consistent, in the sense of Proposition \ref{Concictency of new model}, with system \eqref{W-G-N} at the order of precision $O(\mu^2(\varepsilon+\beta))$. The improved precision compared to the classical Green-Naghdi system allows for a change in the propagation of the waves. Such occurrences have been studied in the Dingemans experiments \cite{Dingemans94}. In these experiments, they investigated a long wave passing over a submerged obstacle. They observed that waves tend to steepen due to a compression effect from the bottom,  where high harmonics generated by topography-induced nonlinear interactions are freely released behind the obstacle. This last phenomenon makes it natural that one wants to improve the frequency dispersion of the classical shallow water models. Deriving such models has been the subject of active research. Here are some references in the case of the Boussinesq model \cite{GobbiKirby00, MadsenBingham02, ChazelBenoit09}. In the case of the Green-Naghdi model, one can consult \cite{WeiKirbyGrilliEtAl95} and \cite{ChazelLannes11}, where the authors compared the classical Green-Naghdi model with one-parameter and three-parameters Green-Naghdi models in one case of the Dingemans experiments for which the propagation and interaction of highly dispersive waves are under study. By tuning the parameters, they are able to describe the dispersion relation of the water waves equations for a larger set of frequencies. As an example, the dispersion relation of the three-parameter model is
    \begin{align}\label{Disp relation GN}
        \omega_{GN}(\xi)^2 = \xi^2 \frac{(1 + \mu \frac{\theta + \gamma}{3} \xi^2)(1 + \mu \frac{\alpha -1}{3}\xi^2)}{(1+ \mu \frac{\gamma}{3}\xi^2)(1 + \mu \frac{\alpha + \theta}{3}\xi^2)},
    \end{align}
    where the parameters $\alpha, \gamma$ and $\theta$ are chosen such that \eqref{Disp relation GN} approximates well the dispersion relation of the water waves equations, \eqref{Disp relation WW}, for higher frequencies. In particular, for $(\theta, \alpha ,\gamma) = (-1, 1,1)$ we obtain the original Green-Naghdi system. Moreover, in the case $(\theta, \alpha, \gamma) = (0.207, 1,  0.071 )$ it was demonstrated in \cite{ChazelLannes11}, that \eqref{Disp relation GN} is a better approximation of \eqref{Disp relation WW}  (see Figure \ref{fig: disp}). This improvement allowed the authors to describe strongly dispersive waves with uneven bathymetry accurately.
    \begin{figure}[h!]\label{fig: disp}
		\hspace{-0.9cm}
		\includegraphics[scale=0.38]{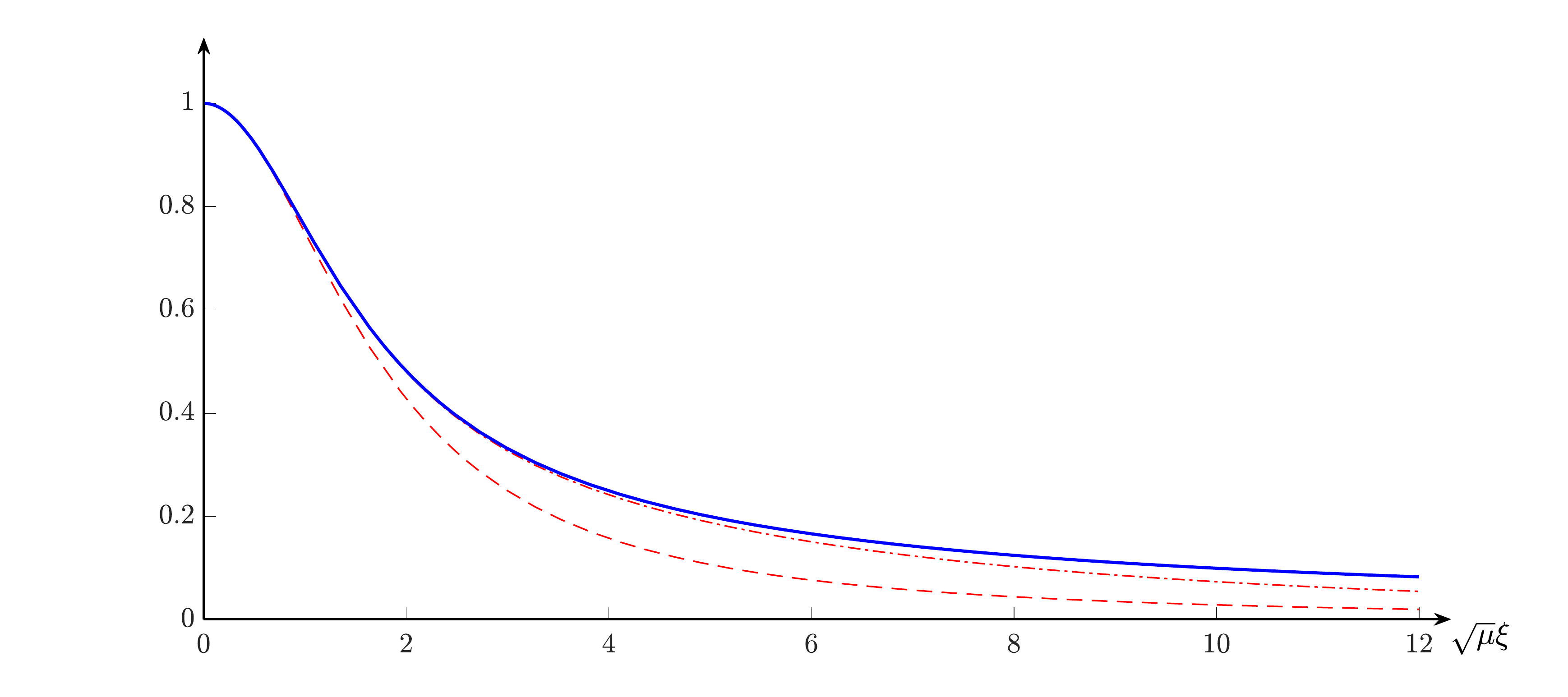}
		\centering
		\caption{\small The blue curve is a plot of $\omega_{WW}(\xi)/\xi^2$  (line). The red curves plots $\omega_{GN}(\xi)/\xi^2$ in the case $(\theta,\alpha,\gamma)= (-1,1,1)$ (dash) and $(\theta,\alpha,\gamma)= (0.207,1,0.071)$ (dash-dots).}
	\end{figure}
    In fact, in the case where high frequencies are dominant, the improved Green-Naghdi models tend to describe the propagation of the waves more correctly. However, in general, one can expect to have even higher frequency interactions for which one needs to keep the full dispersion relation of the water waves equations.

    The first full dispersion model, called the Whitham equations, was introduced by Whitham in \cite{Whitham67} to study breaking waves and Stokes waves of maximal amplitude. The existence of these phenomena for this model has been proved in the recent papers \cite{EhrnstromWahlen19, Hur17, SautWang22, TruongWahlen22}. The Whitham is a classical model in oceanography and can be seen as a modified version of the Kordeweg-de Vries equations with lower frequency dispersion. In addition, the existence of periodic waves was proved in \cite{EhrstromKalisch09}, and the existence of Benjamin-Feir instabilities was demonstrated in \cite{HurJohnson15, SanfordJodamaKalisch14}. See also the series of papers on the stability of traveling waves \cite{Arnesen16,EhrnstromGroves12,JohnsonDouglas20, StefanovWright20},

    The study of bidirectional full dispersion models for a flat bottom has also been the subject of active research. One class of such systems is the Whitham-Boussinesq ones. They are the full dispersion versions of the Boussinesq system, meaning they have the same dispersion relation as the water waves equations \eqref{Disp relation WW}. Like the Whitham equation, these type of systems features solitary waves \cite{DinvayNilsson21, NilssonWang19}, Benjamin-Feir instabilities \cite{HurPandey16,Pandey19, VargasMaganaPanayotaros16}, high-frequency instabilities of small-amplitude periodic traveling waves \cite{EhrnstromJohnson19}. See also some comparative studies between the Boussinesq and the Whitham-Boussinesq models \cite{Carter18,DinvayDutykhKalisch19}.

    The full dispersion Whitham-Green-Naghdi models are next order approximations of the water waves equations when compared to the Whitham-Boussinesq systems. These systems were recently derived in \cite{Emerald21} for a flat bottom and extended to include bathymetry in \cite{DucheneMMWW21}. See also \cite{DucheneIsrawiTalhouk16} where the authors derived a two-layers Whitham-Green-Naghdi system. There is still a lot of research left to be done on the study of qualitative properties of these systems, but we mention the work of Duchene et. al \cite{DucheneNilssonWahlen18}, which proved the existence of solitary waves where they consider both surface gravity waves and internal waves. 

    An important part of the study of the full dispersion systems is the full justification as an asymptotic model of the water waves equations in the shallow water regime. To be more precise, we say a model is fully justified if the following points are proven:
    \begin{itemize}
        \item The solutions of the water waves equations exist on the scale $O(\frac{1}{\max\{\ve,\beta\}})$.
        \item The solutions of the asymptotic model exist on the scale $O(\frac{1}{\max\{\ve,\beta\}})$.
        \item Solutions of the water waves equations solve the asymptotic model up to remainder terms of a specified order of precision in terms of the adimensional parameters $\mu,\varepsilon$, and $\beta$. This last point is called the consistency of the water waves equations with respect to the asymptotic model.
        \item By virtue of the previous points, 
        one has to show that the difference between the solutions of the water waves equations and the asymptotic model satisfies an error estimate depending polynomially on $\mu, \ve$ and $\beta$. 
    \end{itemize}
    If we can verify these four points, then we can compare solutions of the water waves equations with solutions of the asymptotic models up to times of order $O(\frac{1}{\max\{\ve,\beta\}})$. The first point is proved by Alvarez-Samaniego and Lannes in \cite{Alvarez-SamaniegoLannes08a}.

    The three remaining points are specific to the asymptotic model. For instance, in the case of the Whitham equation, the local well-posedness in the relevant time scale follow by classical arguments on hyperbolic systems. The consistency of the water waves equations with this model has been recently proved in \cite{Emerald21b} at the order of precision $O(\mu\varepsilon)$ in the unidirectional case, but the method supposes well-prepared initial conditions. In the bidirectional case, the author proved an order of precision $O(\mu\varepsilon + \varepsilon^2)$ and doesn't suppose well-prepared initial conditions. In conclusion, we have the full justification of the Whitham equation at the order of precision $O(\mu\varepsilon)$ in the unidirectional case under the restriction of well-prepared initial conditions. In the bidirectional case, the order of precision is $O(\mu\varepsilon + \varepsilon^2)$. 

    Regarding the Whitham-Boussinesq systems for flat bottoms, the consistency of the water waves equations with the later models has been proved in \cite{Emerald21} with an order of precision $O(\mu\varepsilon)$ in the shallow water regime. When nonflat bottoms are considered, it has been proved in \cite{DucheneMMWW21} to be consistent with the water waves with a precision $O(\mu(\ve + \beta))$. With respect to the second point of the justification, it has been proved for a large class of Whitham-Boussinesq systems with flat bottoms \cite{Paulsen22, Emerald22}, to be well-posed on the time scale $O(\frac{1}{\ve})$. Lastly, we also mention earlier results on the local-well posedness on a fixed time scale given in \cite{Dinvay19, Dinvay20, DinvaySelbergTesfahun19}. 

    For the Whitham-Green-Naghdi systems, it is proved in \cite{Emerald21} that for a flat bottom, the water waves equations are consistent with the later systems at the order of precision $O(\mu^2\varepsilon)$ in the shallow water regime. Moreover, in the case of uneven bathymetry, it has been proved in \cite{DucheneMMWW21} that the precision order is $O(\mu^2(\varepsilon + \beta))$. In \cite{DucheneIsrawiTalhouk16}, the authors proved the local well-posedness with a relevant time scale for a two-layer full dispersion Green-Naghdi model with surface tension. This system can be seen as a generalization of \eqref{W-G-N}. However, their method relies on adding surface tension, where the time of existence tends to zero as the surface tension parameter goes to zero. Moreover, this system has only been proved to be consistent with the water waves equations at the order of precision $O(\mu^2)$ even if, based on numerical experiments, the expected seems to be $O(\mu^2\varepsilon)$. 

    %
    %
    %
    In the present paper, we  prove the full justification of the Whitham-Green-Naghdi system without surface tension \eqref{W-G-N} as an asymptotic model of the water waves equations at the order of precision $O(\mu^2(\varepsilon+\beta))$.

    \subsection{Definition and notations}
	\begin{itemize}
		\item We  let $c$ denote a positive constant independent of $\mu, \ve, \beta$ that may change from line to line. Also, as a shorthand, we use the notation $a \lesssim b$ to mean $a \leq c\: b$.

        \item Let $s \in \R$ then the function $\lceil s \rceil$ returns the smallest integer greater than or equal to $s$.

		\item Let $L^2(\R)$ be the usual space of square integrable functions with norm $|f|_{L^2} = \sqrt{\int_{\R} |f(x)|^2 \: dx}$. Also, for any $f,g \in L^2(\R)$ we denote the scalar product by $\big{(} f,g \big{)}_{L^2} = \int_{\R} f(x) \overline{g(x)} \: dx$.
		

		\item Let $f:\mathbb{R} \to \mathbb{R}$ be a tempered distribution, let $\hat{f}$ or $\mathcal{F}f$ be its Fourier transform. Let $G:\mathbb{R}\to\mathbb{R}$ be a smooth function. Then the Fourier multiplier associated with $G(\xi)$ is denoted $\mathrm{G}$ and defined by the formula:
        \begin{align*}
            \widehat{\mathrm{G}f}(\xi) = G(\xi)\hat{f}(\xi).
        \end{align*}

		\item For any $s \in \mathbb{R}$ we call the multiplier $ \widehat{\mathrm{D}^{s}f} (\xi)= |\xi|^s \hat{f}(\xi)$ the Riesz potential of order $-s$. 

		\item For any $s \in \mathbb{R}$ we call the multiplier $\mathrm{J}^s = (1+\mathrm{D}^2)^{\frac{s}{2}} = \langle \mathrm{D} \rangle^s$ the Bessel potential of order $-s$.

            \item The Sobolev space $H^s(\mathbb{R})$  is equivalent to the weighted $L^2-$space with $|f|_{H^s} = |\mathrm{J}^s f|_{L^2}$. 
            
            \item For any $s\geq 1$ we will denote $\dot{H}^s(\mathbb{R})$ the Beppo Levi space  with $|f|_{\dot{H}^s} = |\mathrm{J}^{s-1}\partial_x f|_{L^2}$. 
    
        %
        %
        %
        %
        \item Let $k\in\mathbb{N}, l\in\mathbb{N}$ and $m \in \mathbb{N}$. A function $R$ is said to be of order $O(\mu^k(\varepsilon^l+ \beta^m))$, denoted $R=O(\mu^k(\varepsilon^l+\beta^m)),$ if divided by $\mu^k(\varepsilon^l+\beta^m)$ this function is uniformly bounded with respect to $(\mu,\varepsilon,\beta) \in \mathcal{A}_{\mathrm{SW}}$ in the Sobolev norms $|\cdot|_{H^{s}}$, $s \geq 0$.

		\item We say $f$ is a  Schwartz function $\mathscr{S}(\mathbb{R})$, if $f \in C^{\infty}(\mathbb{R})$ and satisfies for all $j,k  \in \mathbb{N}$,
		\begin{equation*}
			\sup \limits_{x} |x^{j} \partial_x^{k} f | < \infty.
		\end{equation*}
		
		\item If  $A$ and $B$ are two operators, then we denote the commutator between them to be $[A,B] = AB - BA$.	
	\end{itemize}

    \subsection{Main results}
    Throughout this paper, we will always make the following fundamental assumption. 
    \begin{Def}[Non-cavitation assumption]\label{nonCavitation}  Let $s >\frac{1}{2}$, $\ve \in (0,1)$ and $\beta\geq 0$. We say the initial surface elevation $\zeta_0  \in H^s(\mathbb{R})$ and  the bottom profile $b\in L^{\infty}(\R)$ satisfies the \lq\lq non-cavitation assumption\rq\rq\: if there exist $h_0\in(0,1)$ such that 
		\begin{equation}\label{NonCav}
			1+\ve\zeta_0(x) - \beta b(x) \geq h_0, \quad \text{for all} \: \: \: x\in \mathbb{R}.
		\end{equation}
	\end{Def}

    Next, before we state the main results, we define the energy space associated to \eqref{W-G-N}.
 	
	\begin{Def}\label{Function space}
		We define the complete function space  $Y^s_{\mu}(\mathbb{R}^d) = H^s(\R) \times X^s_{\mu}(\R)$, where $X^s_{\mu}(\R)$ is a subspace of $H^{s+\frac{1}{2}}(\R)$ equipped with the norm
            \begin{equation*}
                |v|_{X^s_{\mu}}^2 := |v|_{H^s}^2 + \sqrt{\mu} |\mathrm{D}^{\frac{1}{2}} v|_{H^{s}}^2,
            \end{equation*}
            and we make the definition
		\begin{equation*}
			|(\zeta, v)|_{Y^s_{\mu}}^2: = |\zeta|_{H^s}^2 + |v|_{X^s_{\mu}}^2.
		\end{equation*}
            
	\end{Def}

    The following Theorem is one of the main results of the paper and concerns the local well-posedness of \eqref{W-G-N} on the relevant time scale $O(\frac{1}{\max\{\ve,\beta\}})$ in the energy space.
 
    \begin{thm}[Well-posedness]	\label{W-P W-G-N} Let $s> \frac{3}{2}$ and $(\mu, \ve, \beta) \in \mathcal{A}_{\mathrm{SW}}$. Assume that $(\zeta_0,v_0) \in Y^s_{\mu}(\mathbb{R})$ satisfies the non-cavitation condition \eqref{NonCav} and $ b \in H^{s+2}(\R)$. Then there exists $T = c \big{(}|(\zeta_0, v_0)|_{Y^s_{\mu}}\big{)}^{-1}$ such that \eqref{W-G-N} admits a unique solution
		$$(\zeta, v) \in C([0,\frac{T}{\max\{\ve, \beta\}}] : Y^s_{\mu}(\mathbb{R})) \cap C^1([0, \frac{T}{\max\{\ve, \beta\}}] : Y^{s-1}_{\mu}(\mathbb{R})),$$
		that satisfies
		\begin{equation}\label{bound on solution thm}
			\sup\limits_{t \in [0,\frac{T}{\max\{\ve, \beta\}} ]} |(\zeta,v)|_{Y^s_{\mu}} \lesssim | (\zeta_0,v_0) |_{Y^s_{\mu}}.
		\end{equation}

		Furthermore, there exists a neighborhood  of $(\zeta_0, v_0)$ such that the flow map
		%
		%
		\begin{equation*}
			 : Y_{\mu}^s(\R) \: \rightarrow \: C([0,\tfrac{T}{2\max\{\ve, \beta\}}]; Y^s_{\mu}(\R)), \quad 
			(\zeta_0,v_0) \: \: \mapsto \: \: (\zeta, v),
		\end{equation*}
		is continuous.

    \end{thm}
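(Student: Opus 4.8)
I would follow the standard route for nonlocal quasilinear hyperbolic systems in the water-waves setting: recast \eqref{W-G-N} as an evolution equation governed by an invertible, coercive dispersion operator, symmetrize, derive energy estimates in $|\cdot|_{Y^s_\mu}$ that lose a factor $\max\{\ve,\beta\}$, and then deduce existence, uniqueness and continuity of the flow from a regularization/compactness scheme together with a Bona-Smith argument.

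\emph{Step 1: the dispersion operator.} I would set $\mathfrak{T}[h,\beta b] := h + \mu h\,\mathcal{T}[h,\beta b]$, so that the second line of \eqref{W-G-N} reads $\mathfrak{T}[h,\beta b]\big(\partial_t v + \ve v\partial_x v\big) + h\partial_x\zeta + \mu\ve h(\mathcal{Q}+\mathcal{Q}_b) = 0$. Since $\mathrm{F}^{\frac12}$ is a bounded self-adjoint Fourier multiplier, an integration by parts shows that $\mathfrak{T}[h,\beta b]$ is self-adjoint on $L^2(\R)$ and that $\big(h\mathcal{T}[h,\beta b]v,v\big)_{L^2}\geq 0$ (it is a sum of squares after completing the square in the $\beta\partial_x b$-terms); hence, under the non-cavitation condition \eqref{NonCav},
\[
\big(\mathfrak{T}[h,\beta b]v,v\big)_{L^2}\ \sim\ |v|_{L^2}^2 + \sqrt{\mu}\,|\mathrm{D}^{\frac12}v|_{L^2}^2\ =\ |v|_{X^0_\mu}^2
\]
uniformly on $\mathcal{A}_{\mathrm{SW}}$, with constants depending only on $h_0$, $|b|_{H^{s+2}}$ and $|\zeta|_{H^s}$ (the point being that the symbol of $\sqrt{\mu}\,\mathrm{F}^{\frac12}\partial_x$ is bounded by $(1+\sqrt{\mu}|\xi|)^{1/2}$, matching the $X^0_\mu$-norm). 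Consequently $\mathfrak{T}[h,\beta b]^{-1}$ is bounded on $H^\sigma$ and on $X^\sigma_\mu$ with norm uniform in $\mu$, so \eqref{W-G-N} is equivalent to an evolution equation $\partial_t U = \Phi[U]$ for $U=(\zeta,v)$, with $\Phi$ well defined on the subset of $Y^s_\mu$ satisfying \eqref{NonCav}.

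\emph{Step 2: energy estimate.} I would define $E^s(U)^2 := |\zeta|_{H^s}^2 + \big(\mathfrak{T}[h,\beta b]\langle\mathrm{D}\rangle^s v,\langle\mathrm{D}\rangle^s v\big)_{L^2}$, which is equivalent to $|U|_{Y^s_\mu}^2$ by Step 1. Applying $\langle\mathrm{D}\rangle^s$ to the two equations and pairing the first with $\langle\mathrm{D}\rangle^s\zeta$ and the second with $\langle\mathrm{D}\rangle^s v$ in $L^2$, the coupling between $\partial_x(hv)$ and $h\partial_x\zeta$ cancels the top-order contribution up to the commutator $[\langle\mathrm{D}\rangle^s,h]$ and the term $(\partial_x h)\langle\mathrm{D}\rangle^s v$, both of which carry a factor $\ve$ or $\beta$ because $\partial_x h = \ve\partial_x\zeta-\beta\partial_x b$; the self-adjointness of $\mathfrak{T}[h,\beta b]$ produces $\tfrac12\tfrac{d}{dt}E^s(U)^2$ plus $\tfrac12\big((\partial_t\mathfrak{T})\langle\mathrm{D}\rangle^s v,\langle\mathrm{D}\rangle^s v\big)_{L^2}$, which is of order $\ve$ since $\partial_t h=-\ve\partial_x(hv)$. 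I would bound the remaining commutator and source terms — those coming through $\mathcal{T}$, through the transport term $\ve v\partial_x(\cdot)$, and through $\mu\ve h(\mathcal{Q}+\mathcal{Q}_b)$ — by Kato-Ponce commutator and Moser product estimates adapted to the order-zero multiplier $\mathrm{F}^{\frac12}$, using repeatedly that $\sqrt{\mu}\,\mathrm{F}^{\frac12}\partial_x v$ is controlled by $|v|_{X^s_\mu}$, that $\mathcal{Q}_b$ and the $\beta\partial_x b$ parts of $\mathcal{T}$ carry a factor $\beta$, and that $\mathcal{Q}$ carries $\ve$; here $s>\tfrac32$ is used to place $\partial_x\zeta,\partial_x v$ in $L^\infty$. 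This yields
\[
\tfrac{d}{dt}E^s(U)^2\ \lesssim\ \max\{\ve,\beta\}\;C\big(E^s(U)\big)\,E^s(U)^2,
\]
with $C(\cdot)$ nondecreasing and depending on $h_0^{-1}$ and $|b|_{H^{s+2}}$, from which a continuation argument yields \eqref{bound on solution thm} and the time scale $T=c\,(|(\zeta_0,v_0)|_{Y^s_\mu})^{-1}$ divided by $\max\{\ve,\beta\}$.

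\emph{Step 3: existence, uniqueness, continuity.} I would construct solutions by a Friedrichs scheme, regularizing the nonlinearities by a mollifier $\mathrm{J}_\delta = \chi(\delta\mathrm{D})$ ($\chi\in C_c^\infty$, $\chi(0)=1$) that is self-adjoint and commutes with Fourier multipliers, so the cancellations — and hence the estimate of Step 2 — hold uniformly in $\delta$; the regularized system is an ODE in $Y^s_\mu$, its maximal solution $U^\delta$ reaches $[0,\tfrac{T}{\max\{\ve,\beta\}}]$ with a uniform bound, and a difference estimate in $Y^{s-1}_\mu$ (the same computation one derivative lower) shows $(U^\delta)_\delta$ is Cauchy in $C([0,\tfrac{T}{\max\{\ve,\beta\}}];Y^{s-1}_\mu)$. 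Its limit $U$ is in $L^\infty$ in time with values in $Y^s_\mu$, solves \eqref{W-G-N}, and lies in $C(\,\cdot\,;Y^s_\mu)\cap C^1(\,\cdot\,;Y^{s-1}_\mu)$ by the equation together with a Bona-Smith argument for strong time-continuity at the top regularity, which also gives \eqref{bound on solution thm}. Uniqueness follows from the $Y^{s-1}_\mu$ (in fact $Y^0_\mu$) difference estimate and Grönwall; and continuous dependence of the solution map on a neighborhood of $(\zeta_0,v_0)$ follows, again by Bona-Smith, by approximating the data in $Y^s_\mu$ and combining the uniform bound, the difference estimate and interpolation.

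\emph{Main obstacle.} The heart of the proof is Steps 1--2: establishing coercivity and invertibility of $\mathfrak{T}[h,\beta b]$ with constants uniform in $\mu\in(0,\mu_{\max}]$ — the nonlocal, $\mu$-dependent multiplier $\mathrm{F}^{\frac12}$ supplies only the half-derivative control $\sqrt{\mu}\,|\mathrm{D}^{\frac12}\cdot|$ present in $|\cdot|_{X^s_\mu}$, in contrast with the full $\sqrt{\mu}\,|\mathrm{D}\,\cdot|$ of the classical Green-Naghdi operator — and then proving the commutator and product estimates involving $\mathrm{F}^{\frac12}$ and $\mathfrak{T}[h,\beta b]^{-1}$ sharply enough that the energy estimate closes in $|\cdot|_{Y^s_\mu}$ with a clean $\max\{\ve,\beta\}$ factor on every non-cancelled term. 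Tracking the bathymetry contributions of $\mathcal{T}$ and $\mathcal{Q}_b$, which require $b\in H^{s+2}$, is the remaining bookkeeping difficulty.
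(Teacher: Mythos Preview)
Your proposal is correct and tracks the paper's strategy closely: the coercivity/invertibility of $\mathfrak{T}[h,\beta b]$ (your Step~1) is exactly the paper's Proposition~\ref{Inverse of T}, the symmetrized energy $E^s$ and the differential inequality with the $\max\{\ve,\beta\}$ factor (your Step~2) match Proposition~\ref{Four: Energy estimate s}, and the Bona--Smith argument for continuity of the flow is what the paper does in Step~5 of Section~\ref{Proof - WP}. The identification of the ``main obstacle'' --- that $\mathrm{F}^{\frac12}$ only supplies the $\sqrt{\mu}\,|\mathrm{D}^{\frac12}\cdot|$ control encoded in $X^s_\mu$, so the commutator and product estimates must be tuned to this weaker norm --- is precisely the content of Subsections~\ref{PropF}--\ref{PropT}.

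The one genuine methodological difference is your construction scheme. You propose a Friedrichs mollifier regularization (replace nonlinearities by $\chi(\delta\mathrm{D})$-smoothed versions, obtain an ODE in $Y^s_\mu$, pass to the limit). The paper instead uses \emph{parabolic regularization}: it adds a dissipative term $-\nu\,\mathrm{J}^{\alpha}\mathbf{U}^{\nu}$ with $\alpha\in(1,\tfrac32]$ to the right-hand side, solves the resulting system by a Duhamel fixed-point argument using the smoothing of the semigroup $e^{-\nu\langle\mathrm{D}\rangle^{\alpha}t}$, and then shows the energy estimate survives uniformly in $\nu$ by absorbing the extra term via its sign (their Step~2 and Remark~\ref{Energy for parabolic s=0}). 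Both routes are standard for quasilinear hyperbolic systems; your Friedrichs scheme is conceptually lighter (no semigroup machinery) but requires checking that the symmetry cancellations survive when $\chi(\delta\mathrm{D})$ is inserted, while the paper's parabolic approach trades that verification for the extra work of controlling the dissipative term against the $\mathscr{T}$-weighted energy. A minor further difference: you run the Cauchy-sequence argument for the approximants at level $Y^{s-1}_\mu$, whereas the paper does it at level $Y^0_\mu$ (Proposition~\ref{Compactness L2}, estimate~\eqref{Energy 1}) and interpolates; either works.
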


    \begin{remark}
        For the sake of simplicity, we restrict our study to the one-dimensional setting. We comment on the possible extension to two dimensions at the end of Section 3. 
    \end{remark}

    For the next Theorem, we will state the full justification of \eqref{W-G-N} as a water waves model. To give the result, we first state the water waves equations:  
    \begin{align}\label{WW}
        \begin{cases}
            \partial_t \zeta - \frac{1}{\mu}\mathcal{G}^{\mu}[\varepsilon\zeta]\psi = 0 \\
            \partial_t \psi + \zeta + \frac{\varepsilon}{2}(\partial_x\psi)^2-\frac{\mu\varepsilon}{2}\frac{(\frac{1}{\mu}\mathcal{G}^{\mu}[\varepsilon\zeta]\psi + \varepsilon\partial_x\zeta\cdot\partial_x\psi)^2}{1+\varepsilon^2\mu(\partial_x\zeta)^2} = 0,
        \end{cases}
    \end{align}
    where $\mathcal{G}^{\mu}[\varepsilon\zeta]$ stands for the Dirichlet-Neumann operator and $\psi$ is the trace at the surface of the velocity potential $\Phi$, see \cite{WWP} for more information. 
    To compare solutions between the water waves equations and system \eqref{W-G-N}, we define the vertical average of the horizontal component of the velocity field through the formula
    \begin{align}\label{VbarDef}
        \overline{V} = \frac{1}{h}\int_{-1+\beta b}^{\varepsilon\zeta} \partial_x \Phi \: \mathrm{d}z, 
    \end{align}
    where $\Phi$ stands for the velocity potential in the water domain $\Omega_t := \{ (x,z)\in \mathbb{R}^2, -1+\beta b \leq z \leq \varepsilon\zeta \}$. It is the solution of the following elliptic problem
    \begin{align}\label{EllipticProblem}
        \begin{cases} 
            \partial_z^2\Phi + \mu \partial_x^2 \Phi = 0, \ \ \mathrm{in} \ \ \Omega_t \\
            \Phi|_{z=\varepsilon\zeta} = \psi, \ \ \partial_n \Phi|_{z=-1+\beta b} = 0,
        \end{cases}
    \end{align}
    where $\partial_n \Phi|_{z=-1+\beta b} = \partial_z \Phi - \mu \beta \partial_x b \partial_x \Phi$. We may now state the final result of this paper. 

    \begin{thm}[Full justification] \label{thm Justification} Let $s \in \N $ such that $s\geq 4$ and $(\mu, \ve, \beta) \in \mathcal{A}_{\mathrm{SW}}$. Then for $b\in H^{s+2}(\R)$ and any initial data $(\zeta_0, \psi_0) \in  H^{s}(\R)\times \dot{H}^{s}(\R)$ satisfying the non-cavitation assumption \eqref{NonCav},  there exist a unique classical solution of the water waves equations \eqref{WW} given by 
    $$\mathbf{U} = (\zeta,  \psi) \in C([0,\frac{\tilde{T}}{\max\{\ve, \beta\}}] : H^{s}(\R)\times \dot{H}^{s}(\R)),$$ 
    from which we define $\overline{V} \in C([0,\frac{\tilde{T}}{\max\{\ve, \beta\}}] : H^{s}(\R))$ through \eqref{EllipticProblem} and \eqref{VbarDef}.
    Moreover, if we let $v_0^{\text{\tiny   WGN} } = \overline{V}|_{t=0}$. Then $v_0^{\text{\tiny   WGN} } \in  X^s_{\mu}(\R)$ and there exist a unique classical solution, denoted by
    $$\mathbf{U}^{\text{\tiny WGN} } = (\zeta^{\text{\tiny WGN} }, v^{\text{\tiny   WGN} }) \in C([0,\frac{T}{\max\{\ve, \beta\}}] : Y^s_{\mu}(\mathbb{R})),$$
    of the Whitham-Green-Naghdi system \eqref{W-G-N}  sharing the same initial data
    $$\mathbf{U}^{\text{\tiny   WGN} }|_{t=0} =  (\zeta , \overline{V})|_{t=0}.$$
    Comparing the two solutions, we have that for $s\in \N$ large enough such that  for all  $0 \leq \max\{\ve, \beta\}t \leq \min\{\tilde{T},T\}$ there holds
    \begin{equation*}
        |\mathbf{U}  - \mathbf{U}^{\text{\tiny   WGN} }|_{L^{\infty}([0,t] \times \R) } \leq C(|\zeta_0|_{H^s},|\overline{V}|_{t=0}|_{H^s}, |b|_{H^{{s+2} }}) \mu^2(\ve + \beta)t,
    \end{equation*}
    with $\tilde{T}, T, C$ positive constants uniform with respect to $(\mu, \ve, \beta)\in \mathcal{A}_{SW}$.  
        
    \end{thm}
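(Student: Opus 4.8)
The plan is to obtain this theorem by assembling three facts that are already at our disposal --- the long time existence of the water waves solution from \cite{Alvarez-SamaniegoLannes08a}, the long time well-posedness of \eqref{W-G-N} from Theorem~\ref{W-P W-G-N}, and the consistency of \eqref{WW} with \eqref{W-G-N} from Proposition~\ref{Concictency of new model} --- and then gluing them together by a stability estimate for the flow of \eqref{W-G-N}.

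First I would fix $s\in\N$ large enough and invoke \cite{Alvarez-SamaniegoLannes08a} to produce the unique classical solution $\mathbf U=(\zeta,\psi)\in C([0,\tilde T/\max\{\ve,\beta\}];H^s(\R)\times\dot H^s(\R))$ of \eqref{WW}, with norm and existence time $\tilde T>0$ controlled by $|\zeta_0|_{H^s}$, $|\psi_0|_{\dot H^s}$ and $|b|_{H^{s+2}}$. Solving the elliptic problem \eqref{EllipticProblem} and using the standard elliptic estimates in the fluid domain (uniform in $\mu\in(0,\mu_{\max}]$, see \cite{WWP}) gives the velocity potential $\Phi$, hence $\overline V$ through \eqref{VbarDef}, in $C([0,\tilde T/\max\{\ve,\beta\}];H^s(\R))$; the same estimates, which encode the $\sqrt\mu$-smoothing reflected in the multiplier $\mathrm F^{\frac12}$, moreover place $\overline V$ in $X^s_\mu(\R)$ at each time with $|\overline V|_{X^s_\mu}$ bounded by the data. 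Thus $v_0^{\text{\tiny WGN}}:=\overline V|_{t=0}\in X^s_\mu(\R)$, and applying Theorem~\ref{W-P W-G-N} with initial data $(\zeta_0,v_0^{\text{\tiny WGN}})$ yields $\mathbf U^{\text{\tiny WGN}}=(\zeta^{\text{\tiny WGN}},v^{\text{\tiny WGN}})\in C([0,T/\max\{\ve,\beta\}];Y^s_\mu(\R))$ satisfying \eqref{bound on solution thm}. Since $\tilde T$ and $T$ depend only on data-controlled quantities, $\min\{\tilde T,T\}$ is bounded below uniformly in $(\mu,\ve,\beta)\in\mathcal A_{\mathrm{SW}}$, so both solutions live on a common interval of the advertised length.

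Next I would record the consistency input: by Proposition~\ref{Concictency of new model}, the pair $(\zeta,\overline V)$ built above is an exact solution of the system obtained from \eqref{W-G-N} by adding to the second equation a residual $R$ with $\sup_t|R(t)|_{X^s_\mu}\le C(|\zeta_0|_{H^s},|\overline V|_{t=0}|_{H^s},|b|_{H^{s+2}})\,\mu^2(\ve+\beta)$ on $[0,\min\{\tilde T,T\}/\max\{\ve,\beta\}]$; the derivative loss in this estimate is exactly what forces $s$ to be large, the missing derivatives being supplied by the higher regularity control of $\mathbf U$ (time derivatives traded for space derivatives via \eqref{WW}). Then, setting $W:=(\zeta-\zeta^{\text{\tiny WGN}},\overline V-v^{\text{\tiny WGN}})$ and subtracting the exact and approximate versions of \eqref{W-G-N}, $W$ solves the system linearized around $(\zeta,\overline V)$ with two sources --- one equal to $R$, of size $O(\mu^2(\ve+\beta))$, and one equal to $\max\{\ve,\beta\}$ times a bounded quasilinear operator applied to $W$, arising from the differences of the nonlinear terms $\ve v\partial_x v$, $\mu\ve h\mathcal Q$ and of the bathymetric contributions in $\mathcal T$, $\mathcal Q_b$, each of which carries an explicit factor $\ve$ or $\beta$. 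Running the energy method from the proof of Theorem~\ref{W-P W-G-N} on this linear equation --- the symmetrizer, the coercive $Y^s_\mu$-energy functional $E(\cdot)$ (coercivity coming from the positivity of $h+\mu h\mathcal T$ and the $\sqrt\mu$-gain of $\mathrm F^{\frac12}$), and the attendant nonlocal commutator estimates --- I expect
\begin{equation*}
    \frac{d}{dt}E(W)\;\lesssim\;\max\{\ve,\beta\}\,E(W)\;+\;\mu^2(\ve+\beta)\sqrt{E(W)}.
\end{equation*}
Since $W|_{t=0}=0$, Gronwall's inequality gives $\sqrt{E(W)(t)}\lesssim\mu^2(\ve+\beta)\,t\,e^{c\max\{\ve,\beta\}t}$, hence $|W(t)|_{Y^s_\mu}\lesssim\mu^2(\ve+\beta)\,t$ for $0\le\max\{\ve,\beta\}t\le\min\{\tilde T,T\}$, and the embedding $H^s(\R)\hookrightarrow L^\infty(\R)$ (valid since $s\ge4$) converts this into the stated $L^\infty([0,t]\times\R)$ error bound, with the second component of the water waves solution understood as $\overline V$, the quantity $v^{\text{\tiny WGN}}$ is designed to approximate.

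The hard part is the stability estimate: although it amounts to running the energy method of Theorem~\ref{W-P W-G-N} on the difference of two solutions with an extra source, two points must be checked with care. All nonlocal commutators between $\mathrm F^{\frac12}$, $\langle\mathrm D\rangle^s$ and the variable coefficients $h$ and $\beta\partial_x b$ have to be controlled \emph{with the correct powers of $\mu$}, so that no spurious negative power of $\mu$ degrades the $O(\mu^2(\ve+\beta))$ rate; and every term in $\tfrac{d}{dt}E(W)$ that is not paired with the residual $R$ must genuinely carry a factor $\ve$ or $\beta$, which is what produces the long $O(1/\max\{\ve,\beta\})$ time scale rather than a fixed one. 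Both features are already present in the proof of Theorem~\ref{W-P W-G-N}, so modulo bookkeeping this theorem is essentially a corollary of the results established earlier in the paper.
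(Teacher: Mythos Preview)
Your proposal is correct and follows essentially the same route as the paper: existence for \eqref{WW} from \cite{Alvarez-SamaniegoLannes08a}, existence for \eqref{W-G-N} from Theorem~\ref{W-P W-G-N}, consistency from Proposition~\ref{Concictency of new model}, and a stability estimate on the difference $W$ closed by Gr\"onwall and Sobolev embedding. The only refinement worth noting is that the paper does not rerun the a~priori estimates of Theorem~\ref{W-P W-G-N} directly but instead invokes the dedicated difference estimate of Proposition~\ref{Compactness L2} (specifically \eqref{Energy 2}, \eqref{equiv 2} and Remark~\ref{divergence est}) at a \emph{lower} regularity level $r$ with $r+1<s$, so that the $N(r+1)$ appearing in the energy inequality is controlled by the $H^s$ bounds on the solutions; this is exactly the derivative-loss mechanism you allude to, made explicit.
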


    \begin{remark}
        In the statement of the theorem, we simply let $s$ be large enough. The reason is due to the consistency result given by Theorem $10.5$ in \cite{DucheneMMWW21}, which links the water waves equations with a similar Whitham-Green-Naghdi system. However, it is possible to have a precise range of $s$ if one reproves this theorem and carefully tracks the \lq\lq loss of derivatives\rq\rq. See Section \ref{Consistency} for more on this point. 
    \end{remark}

    \subsection{Outline}

    In Section \ref{Preliminary results}, we state the technical estimates that will be used throughout the paper. In Subsection \ref{Classical estimates}, we state some classical estimates. In Subsection \ref{PropF} we study the properties of the Fourier multiplier $\mathrm{F}^{\frac{1}{2}}$. Lastly, in Subsection \eqref{PropT} we establish the properties related to the operator $\mathcal{T}[h,\beta b]$ defined by \eqref{opT}.

    In Section \ref{Consistency} we prove the consistency of the water waves equations with system \eqref{W-G-N} at the order of precision $O(\mu^2(\varepsilon+\beta))$ in the shallow water regime $\mathcal{A}_{\mathrm{SW}}$. The starting point of this proof is the full dispersion Green-Naghdi system derived \cite{DucheneMMWW21} where the precision with respect to the water waves equations \eqref{WW} is proved to be $O(\mu^2(\varepsilon+\beta))$.

    Sections,  \ref{AprioriEstimates} and \ref{Estimates on the diff} are about establishing the energy estimates with uniform bounds on the solutions. Then as a result of the energy estimates provided in the aforementioned sections, we are in the position to prove Theorem \ref{W-P W-G-N} in Section \ref{Proof - WP}. The proof relies on classical hyperbolic theory for quasilinear systems.

    In Section \ref{Justification}, we prove the full justification result of system \eqref{W-G-N} resulting from all previous sections.

    \section{Preliminary results}\label{Preliminary results}

    \subsection{Classical estimates}\label{Classical estimates}

    In this section, we state some classical results that will be used throughout the paper. 
    %
	First, recall the embedding results (see, for example, \cite{LinaresPonce15}).
	\begin{prop}[Sobolev embedding]
		Let $f \in \mathscr{S}(\R)$ and $s\in (0,\frac{1}{2})$. Then $H^s(\R) \hookrightarrow L^p(\R)$ with $p = \frac{2}{1-2s}$, and there holds
		\begin{equation}\label{Sobolev embedding s small}
			| f |_{L^p} \lesssim | \mathrm{D}^s f |_{L^2}.
		\end{equation}
		Moreover, In the case $s>\frac{1}{2}$, then $H^s(\R)$ is continuously embedded in $L^{\infty}(\R)$. 
		
	\end{prop}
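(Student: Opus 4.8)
The plan is to prove the two embeddings by a direct Fourier-analytic argument. For the first claim, with $s\in(0,\tfrac12)$ and $p = \frac{2}{1-2s}$, I would deduce the inequality $|f|_{L^p}\lesssim |\mathrm{D}^s f|_{L^2}$ from the Hardy–Littlewood–Sobolev inequality applied to the Riesz potential. Concretely, for $f\in\mathscr{S}(\R)$ write $f = \mathrm{D}^{-s}(\mathrm{D}^s f)$; since the kernel of $\mathrm{D}^{-s}$ in one dimension is (a constant multiple of) $|x|^{s-1}$, one has $f(x) = c_s\int_\R |x-y|^{-(1-s)}(\mathrm{D}^s f)(y)\,dy$. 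Apply the Hardy–Littlewood–Sobolev inequality with exponents $q=2$ and the target exponent $p$ determined by $\frac1p = \frac12 - s$ (equivalently $p = \frac{2}{1-2s}$, which indeed lies in $(2,\infty)$ for $s\in(0,\tfrac12)$): this yields $|f|_{L^p}\lesssim |\mathrm{D}^s f|_{L^2}$, which is exactly \eqref{Sobolev embedding s small}.

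For the second claim, when $s>\tfrac12$ I would use the Fourier inversion formula together with Cauchy–Schwarz. For $f\in\mathscr{S}(\R)$, since $\hat f\in L^1(\R)$ we have $f(x) = \frac{1}{2\pi}\int_\R e^{ix\xi}\hat f(\xi)\,d\xi$, hence
\begin{equation*}
|f(x)| \leq \frac{1}{2\pi}\int_\R |\hat f(\xi)|\,d\xi = \frac{1}{2\pi}\int_\R \langle\xi\rangle^{-s}\,\langle\xi\rangle^{s}|\hat f(\xi)|\,d\xi \leq \frac{1}{2\pi}\Big(\int_\R \langle\xi\rangle^{-2s}\,d\xi\Big)^{1/2}\Big(\int_\R \langle\xi\rangle^{2s}|\hat f(\xi)|^2\,d\xi\Big)^{1/2}.
\end{equation*}
The first integral converges precisely because $2s>1$, and the second equals $c\,|f|_{H^s}^2$; taking the supremum over $x$ gives $|f|_{L^\infty}\lesssim |f|_{H^s}$, i.e. the continuous embedding $H^s(\R)\hookrightarrow L^\infty(\R)$.

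In both cases the extension from $\mathscr{S}(\R)$ to all of $H^s(\R)$ (resp. to the completion giving the stated embedding) follows by the density of Schwartz functions in $H^s$ and the continuity of the estimates. The only genuine analytic input is the Hardy–Littlewood–Sobolev inequality for the first part; the second part is an elementary Cauchy–Schwarz computation. Hence the main (and essentially only) obstacle is citing/invoking HLS with the correct one-dimensional Riesz-kernel normalization and the correct relation $p=\frac{2}{1-2s}$ between the exponents — there is no quasilinear or dispersive difficulty here, this being a standard harmonic-analysis fact recalled for later use.
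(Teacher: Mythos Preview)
Your proof is correct and follows the standard route: Hardy--Littlewood--Sobolev for the fractional embedding and Fourier inversion plus Cauchy--Schwarz for the $L^\infty$ embedding. Note, however, that the paper does not actually prove this proposition; it is stated as a classical fact with a reference to \cite{LinaresPonce15}, so there is no ``paper's own proof'' to compare against. Your argument is exactly the kind of proof one would find in such a reference.
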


	Next, we state the Leibniz rule for the Riesz potential.
	
	\begin{prop}[Fractional Leibniz rule \cite{KenigPonceVega93}]  Let $\sigma = \sigma_1 + \sigma_2 \in (0,1)$ with $\sigma_i\in[0,\sigma]$ and $p,p_1,p_2\in(1,\infty)$ satisfy $\frac{1}{p} = \frac{1}{p_1} + \frac{1}{p_2}$. Then, for $f,g \in \mathscr{S}(\R)$
		\begin{equation}\label{FracLeibnizRule}
			|\mathrm{D}^{\sigma}(fg) - f\mathrm{D}^{\sigma}g - g \mathrm{D}^{\sigma}f |_{L^p} \lesssim | \mathrm{D}^{\sigma_1} f |_{L^{p_1}} |\mathrm{D}^{\sigma_2}g|_{L^{p_2}}.
		\end{equation}
		Moreover, the case $\sigma_2 = 0$, $p_2 = \infty$ is also allowed. 
	\end{prop}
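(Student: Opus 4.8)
The plan is to turn the estimate into a pointwise bilinear inequality by means of the singular-integral representation of the Riesz potential, and then to read off the $L^p$ bound from the Strichartz square-function description of the homogeneous fractional Sobolev spaces.

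First I would recall that for $0<\sigma<1$ and $h\in\mathscr{S}(\R)$ one has the absolutely convergent representation
\begin{equation*}
  \mathrm{D}^{\sigma}h(x)=c_\sigma\int_{\R}\frac{h(x)-h(x-y)}{|y|^{1+\sigma}}\,dy,\qquad c_\sigma=\Big(\int_{\R}\frac{1-\cos y}{|y|^{1+\sigma}}\,dy\Big)^{-1}>0,
\end{equation*}
which follows from computing the Fourier multiplier $\int_{\R}(1-e^{-iy\xi})|y|^{-1-\sigma}\,dy=c_\sigma^{-1}|\xi|^{\sigma}$. Inserting this for $\mathrm{D}^{\sigma}(fg)$, $\mathrm{D}^{\sigma}f$ and $\mathrm{D}^{\sigma}g$ (the last two multiplied by the pointwise factors $g(x)$ and $f(x)$ respectively) and using the elementary identity
\begin{equation*}
  (fg)(x)-(fg)(x-y)-f(x)\big(g(x)-g(x-y)\big)-g(x)\big(f(x)-f(x-y)\big)=-\big(f(x)-f(x-y)\big)\big(g(x)-g(x-y)\big)
\end{equation*}
leads to the representation
\begin{equation*}
  \mathrm{D}^{\sigma}(fg)-f\,\mathrm{D}^{\sigma}g-g\,\mathrm{D}^{\sigma}f=-c_\sigma\int_{\R}\frac{\big(f(x)-f(x-y)\big)\big(g(x)-g(x-y)\big)}{|y|^{1+\sigma}}\,dy.
\end{equation*}

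Next, in the case $\sigma_1,\sigma_2>0$, I would use $\sigma_1+\sigma_2=\sigma$ to split the weight as $1+\sigma=\tfrac{1+2\sigma_1}{2}+\tfrac{1+2\sigma_2}{2}$ and apply the Cauchy--Schwarz inequality in the $y$ variable, obtaining the pointwise bound
\begin{equation*}
  \big|\mathrm{D}^{\sigma}(fg)-f\,\mathrm{D}^{\sigma}g-g\,\mathrm{D}^{\sigma}f\big|(x)\le c_\sigma\,\mathcal{S}_{\sigma_1}f(x)\,\mathcal{S}_{\sigma_2}g(x),\qquad \mathcal{S}_{\tau}h(x):=\Big(\int_{\R}\frac{|h(x)-h(x-y)|^{2}}{|y|^{1+2\tau}}\,dy\Big)^{1/2}.
\end{equation*}
Hölder's inequality in $x$ with exponents $p_1,p_2$ then gives $\lesssim\|\mathcal{S}_{\sigma_1}f\|_{L^{p_1}}\|\mathcal{S}_{\sigma_2}g\|_{L^{p_2}}$, and the Strichartz characterisation $\|\mathcal{S}_{\tau}h\|_{L^{q}}\simeq\|\mathrm{D}^{\tau}h\|_{L^{q}}$ (valid for $0<\tau<1$, $1<q<\infty$) turns this into the claimed right-hand side.

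It remains to handle the degenerate cases $\sigma_1=0$ or $\sigma_2=0$ --- in particular the endpoint $\sigma_2=0$, $p_2=\infty$ --- in which $\mathcal{S}_0$ is no longer defined. For these I would rewrite the left-hand side as $[\mathrm{D}^{\sigma},g]f-f\,\mathrm{D}^{\sigma}g$ and run a Littlewood--Paley decomposition $f=\sum_j\Delta_jf$, $g=\sum_k\Delta_kg$: the bilinear symbol $m(\xi,\eta)=|\xi+\eta|^{\sigma}-|\xi|^{\sigma}-|\eta|^{\sigma}$ is, on each of the three frequency regions $|\eta|\ll|\xi|$, $|\xi|\ll|\eta|$, $|\xi|\sim|\eta|$ and after the appropriate dyadic rescaling, comparable to $|\xi|^{\sigma_1}|\eta|^{\sigma_2}$ times a symbol of Coifman--Meyer type, so that applying the Coifman--Meyer multiplier theorem block by block and summing over dyadic scales gives the bound $\lesssim\|\mathrm{D}^{\sigma_1}f\|_{L^{p_1}}\|\mathrm{D}^{\sigma_2}g\|_{L^{p_2}}$ on each region. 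I expect this last step to be the main obstacle: $m$ is not a Coifman--Meyer symbol globally (its rescaled derivatives degenerate near the two frequency axes), so the dyadic decomposition, the uniform symbol bounds on each block, and the summation at $p_2=\infty$ must be carried out carefully; this is precisely the content of \cite{KenigPonceVega93}, on which one may also simply rely.
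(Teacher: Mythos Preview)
The paper does not give its own proof of this proposition; it is stated as a known result with a citation to \cite{KenigPonceVega93} and used as a black box. Your sketch therefore goes beyond what the paper does. For the nondegenerate case $\sigma_1,\sigma_2\in(0,\sigma)$ your argument is the standard one and is correct: the singular-integral representation of $\mathrm{D}^{\sigma}$, the algebraic identity yielding a product of first differences, the Cauchy--Schwarz split $|y|^{-1-\sigma}=|y|^{-(1+2\sigma_1)/2}|y|^{-(1+2\sigma_2)/2}$, and the Strichartz square-function characterisation $\|\mathcal{S}_\tau h\|_{L^q}\simeq\|\mathrm{D}^\tau h\|_{L^q}$ for $0<\tau<1$, $1<q<\infty$ all combine exactly as you describe.

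For the endpoint $\sigma_2=0$, $p_2=\infty$ your outline is more schematic: the Coifman--Meyer theorem does not directly cover $p_2=\infty$, and the summation over dyadic blocks at that endpoint is precisely where the work in \cite{KenigPonceVega93} lies. You correctly identify this as the main obstacle and defer to the reference, which is also what the paper does, so there is no gap relative to the paper's treatment.
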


    \begin{cor} Let $r \in (\frac{1}{2},1)$,  $f\in H^{r}(\R)$ and $g\in H^{\frac{1}{2}}(\R)$. Then
    \begin{equation}\label{Holder + sobolev}
        |fg|_{L^2} \lesssim |\mathrm{D}^{r-\frac{1}{2}}f|_{L^2} |g|_{H^{\frac{1}{2}}}   
    \end{equation}
    and 
    \begin{equation}\label{D_half - fracLeib - Sob}
            |\mathrm{D}^{\frac{1}{2}}(fg)|_{L^2} \lesssim |f|_{H^{r}}|g|_{H^{\frac{1}{2}}}.
        \end{equation}
    \end{cor}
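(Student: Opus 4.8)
The plan is to prove \eqref{Holder + sobolev} by pairing H\"older's inequality with the sub-critical Sobolev embedding \eqref{Sobolev embedding s small}, and then to deduce \eqref{D_half - fracLeib - Sob} from the fractional Leibniz rule \eqref{FracLeibnizRule} together with \eqref{Holder + sobolev}, \eqref{Sobolev embedding s small}, and the embedding $H^{r}(\R)\hookrightarrow L^{\infty}(\R)$ (valid since $r>\tfrac12$).

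First I would establish \eqref{Holder + sobolev}. Set $s=r-\tfrac12\in(0,\tfrac12)$; by \eqref{Sobolev embedding s small} one has $|f|_{L^{p}}\lesssim|\mathrm{D}^{r-1/2}f|_{L^{2}}$ with $p=\frac{2}{1-2s}=\frac{1}{1-r}$. Choose $q$ by $\frac1q=\frac12-\frac1p=r-\frac12\in(0,\tfrac12)$, so that H\"older gives $|fg|_{L^{2}}\le|f|_{L^{p}}\,|g|_{L^{q}}$; since $\frac12-\frac1q=1-r\in(0,\tfrac12)$, a second application of \eqref{Sobolev embedding s small} yields $|g|_{L^{q}}\lesssim|\mathrm{D}^{1-r}g|_{L^{2}}\le|g|_{H^{1/2}}$, where the last inequality uses $1-r<\tfrac12$. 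Chaining these bounds gives \eqref{Holder + sobolev}; note that the hypotheses $r>\tfrac12$ and $r<1$ are exactly what keeps $p,q$ finite and $>1$.

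Next I would prove \eqref{D_half - fracLeib - Sob}. Apply \eqref{FracLeibnizRule} with $\sigma=\tfrac12$, $\sigma_{1}=\sigma_{2}=\tfrac14$, $p=2$, $p_{1}=p_{2}=4$, writing $\mathrm{D}^{1/2}(fg)=f\,\mathrm{D}^{1/2}g+g\,\mathrm{D}^{1/2}f+R$ with $|R|_{L^{2}}\lesssim|\mathrm{D}^{1/4}f|_{L^{4}}\,|\mathrm{D}^{1/4}g|_{L^{4}}$. Applying \eqref{Sobolev embedding s small} with $s=\tfrac14$ (so $p=4$) to $\mathrm{D}^{1/4}f$ and to $\mathrm{D}^{1/4}g$ gives $|\mathrm{D}^{1/4}f|_{L^{4}}\lesssim|\mathrm{D}^{1/2}f|_{L^{2}}\le|f|_{H^{r}}$ and $|\mathrm{D}^{1/4}g|_{L^{4}}\lesssim|\mathrm{D}^{1/2}g|_{L^{2}}\le|g|_{H^{1/2}}$, hence $|R|_{L^{2}}\lesssim|f|_{H^{r}}|g|_{H^{1/2}}$. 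For the first boundary term, $|f\,\mathrm{D}^{1/2}g|_{L^{2}}\le|f|_{L^{\infty}}|\mathrm{D}^{1/2}g|_{L^{2}}\lesssim|f|_{H^{r}}|g|_{H^{1/2}}$ by $H^{r}\hookrightarrow L^{\infty}$. For the second boundary term, I would argue exactly as in the proof of \eqref{Holder + sobolev}, now with $\mathrm{D}^{1/2}f$ in the role of $f$: by H\"older, $|g\,\mathrm{D}^{1/2}f|_{L^{2}}\le|g|_{L^{p}}|\mathrm{D}^{1/2}f|_{L^{q}}$ with $\frac1q=1-r$, where \eqref{Sobolev embedding s small} applied to $\mathrm{D}^{1/2}f\in\dot H^{r-1/2}$ gives $|\mathrm{D}^{1/2}f|_{L^{q}}\lesssim|\mathrm{D}^{r}f|_{L^{2}}\le|f|_{H^{r}}$, and $\frac1p=r-\tfrac12$ gives $|g|_{L^{p}}\lesssim|\mathrm{D}^{1-r}g|_{L^{2}}\le|g|_{H^{1/2}}$. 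Summing the three contributions yields \eqref{D_half - fracLeib - Sob}.

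There is no substantial obstacle here; the only point requiring care is the bookkeeping of Lebesgue exponents, so that every invocation of \eqref{Sobolev embedding s small} has parameter strictly in $(0,\tfrac12)$ and every H\"older pairing is admissible. This is precisely why $r$ must lie in the open interval $(\tfrac12,1)$, and it is also why the term $g\,\mathrm{D}^{1/2}f$ — in which $\mathrm{D}^{1/2}f$ carries only $r-\tfrac12<\tfrac12$ derivatives and so cannot be placed in $L^{\infty}$ — has to be controlled by the same H\"older--Sobolev mechanism as in \eqref{Holder + sobolev} rather than by a crude $L^{\infty}$ estimate.
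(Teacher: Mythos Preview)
Your proof is correct and follows essentially the same route as the paper: H\"older plus the sub-critical Sobolev embedding for \eqref{Holder + sobolev}, and the triangle-inequality decomposition via \eqref{FracLeibnizRule} for \eqref{D_half - fracLeib - Sob}. The only cosmetic difference is that you split the Leibniz remainder with $(\sigma_1,\sigma_2)=(\tfrac14,\tfrac14)$ and $p_1=p_2=4$, whereas the paper takes $(\sigma_1,\sigma_2)=(\tfrac12,0)$ and reuses the same H\"older pairing as for the term $g\,\mathrm{D}^{1/2}f$; both choices work.
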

    \begin{proof}
        To prove \eqref{Holder + sobolev}, we first let $\nu  \in (0,\frac{1}{2})$ to be fixed later.  Then combine Hölder's inequality with the conjugate pair $\frac{1}{p_1} + \frac{1}{p_2} = \nu + \frac{1-2\nu}{2}=\frac{1}{2}$ and \eqref{Sobolev embedding s small} to get that
        \begin{align*}
            |fg|_{L^2} 
             \lesssim
            |f|_{L^{\frac{1}{\nu}}} |g|_{L^{\frac{2}{1-2\nu}} } 
             \lesssim 
            |\mathrm{D}^{\frac{1-2\nu}{2}}f|_{L^2}|\mathrm{D}^{\nu}g|_{L^2}.
        \end{align*}
        However, for any $r \in (\frac{1}{2},1)$ we observe that we may choose $\nu$ such that $\frac{1-2\nu}{2} = r-\frac{1}{2}$, and the proof follows.

        Next, we prove \eqref{D_half - fracLeib - Sob}.  We will use Hölder's inequality, \eqref{FracLeibnizRule} with $(\sigma_1,\sigma_2) = (\frac{1}{2},0)$, and $\frac{1}{2} = \nu + \frac{1- 2\nu}{2}$ with $\nu \in (0,\frac{1}{2})$ as above to deduce
        \begin{align*}
             |\mathrm{D}^{\frac{1}{2}} (f g) |_{L^2} 
                &
                \leq
                |\mathrm{D}^{\frac{1}{2}} (f g) - f\mathrm{D}^{\frac{1}{2}}g- g\mathrm{D}^{\frac{1}{2}}f |_{L^2} 
                +
                |f\mathrm{D}^{\frac{1}{2}}g|_{L^2} 
                +
                |g\mathrm{D}^{\frac{1}{2}}f  |_{L^2}
                \\
                &
                \lesssim
               |\mathrm{D}^{\frac{1}{2}} f |_{L^{\frac{1}{\nu}}} | g|_{L^{\frac{2}{1-2\nu}}}
                +
                |f|_{L^{\infty}} |\mathrm{D}^{\frac{1}{2}}g |_{L^2}
                \\
                & \lesssim 
                |f|_{H^r} |g|_{H^{\frac{1}{2}}},
        \end{align*}
        where we used \eqref{Sobolev embedding s small} in the last line with $\frac{1-2\nu}{2} = r -\frac{1}{2}$, and the Sobolev embedding $H^{r}(\R) \hookrightarrow L^{\infty}(\R) $.  
        
    \end{proof}

    \begin{Def}\label{definition order of a Fourier multiplier}
    Let $d=1,2$ We say that a Fourier multiplier $\mathrm{G}(\mathrm{D})$ is of order $s$ $(s\in\mathbb{R})$ and write $\mathrm{G} \in \mathcal{S}^s$ if $\xi \in \mathbb{R}^d \mapsto G(\xi) \in \mathbb{C}$ is smooth and satisfies
    \begin{align*}
        \forall \xi \in \mathbb{R}^d, \forall\beta\in\mathbb{N}^d, \ \ \sup_{\xi\in\mathbb{R}^d} \langle\xi\rangle^{|\beta|-s}|\partial^{\beta}G(\xi)| < \infty.
    \end{align*}
    We also introduce the seminorm
    \begin{align*}
        \mathcal{N}^s(\mathrm{G}) = \sup_{\beta\in\mathbb{N}^d,|\beta|\leq2+d+\lceil\frac{d}{2}\rceil} \sup_{\xi\in\mathbb{R}^d} \langle\xi\rangle^{|\beta|-s}|\partial^{\beta}G(\xi)|.
    \end{align*}
    \end{Def}

    \begin{prop}
        Let $d = 1,2$, $t_0 > d/2$, $s \geq 0$ and $\mathrm{G} \in \mathcal{S}^s$. If $f\in H^s\cap H^{t_0+1}(\mathbb{R}^d)$ then, for all $g \in H^{s-1}(\mathbb{R}^d)$,
        \begin{align}\label{Commutator estimates}
            |[\mathrm{G},f]g|_2 \leq \mathcal{N}^s(\mathrm{G})|f|_{H^{\max\{t_0+1,s\}}}|g|_{H^{s-1}}. 
        \end{align}
    \end{prop}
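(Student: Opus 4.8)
The plan is to pass to the Fourier side and exploit that the symbol difference $G(\xi)-G(\xi-\eta)$ decays one order better than $G$ itself. Writing $c_d$ for a fixed constant, one has
\begin{equation*}
\widehat{[\mathrm{G},f]g}(\xi)\;=\;c_d\int_{\mathbb{R}^d}\hat f(\eta)\,\big(G(\xi)-G(\xi-\eta)\big)\,\hat g(\xi-\eta)\,d\eta ,
\end{equation*}
so by Plancherel $|[\mathrm{G},f]g|_2=|\widehat{[\mathrm{G},f]g}|_{L^2}$. Because $\mathrm{G}\in\mathcal{S}^s$ and $\mathcal{N}^s(\mathrm{G})$ controls at least the first derivatives of $G$, one has $|G(\xi)|\le\mathcal{N}^s(\mathrm{G})\langle\xi\rangle^s$ and, integrating $\nabla G$ along the segment from $\xi-\eta$ to $\xi$,
\begin{equation*}
|G(\xi)-G(\xi-\eta)|\;\lesssim\;\mathcal{N}^s(\mathrm{G})\,|\eta|\sup_{0\le t\le1}\langle\xi-\eta+t\eta\rangle^{s-1},
\end{equation*}
all constants below being allowed to depend on $s,d,t_0$.

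For each fixed $\xi$ I would split the $\eta$-integral into the low region $\mathcal{R}_{\mathrm{low}}=\{2\langle\eta\rangle\le\langle\xi-\eta\rangle\}$ and its complement $\mathcal{R}_{\mathrm{high}}$. On $\mathcal{R}_{\mathrm{low}}$ one checks that $\langle\xi-\eta\rangle\ge2$, that $\langle\xi\rangle\sim\langle\xi-\eta\rangle$, and that $\langle\xi-\eta+t\eta\rangle\sim\langle\xi-\eta\rangle$ uniformly in $t\in[0,1]$, so the symbol-difference bound becomes $|G(\xi)-G(\xi-\eta)|\lesssim\mathcal{N}^s(\mathrm{G})|\eta|\langle\xi-\eta\rangle^{s-1}$; using $|\hat g(\xi-\eta)|=\langle\xi-\eta\rangle^{1-s}|\widehat{\mathrm{J}^{s-1}g}(\xi-\eta)|$, the $\mathcal{R}_{\mathrm{low}}$-contribution to $|\widehat{[\mathrm{G},f]g}(\xi)|$ is $\lesssim\mathcal{N}^s(\mathrm{G})\,\big(|\cdot|\,|\hat f|\big)*\big(|\widehat{\mathrm{J}^{s-1}g}|\big)(\xi)$, and Young's inequality together with $\int_{\mathbb{R}^d}|\eta|\,|\hat f(\eta)|\,d\eta\lesssim|f|_{H^{t_0+1}}$ (Cauchy–Schwarz, the weight being integrable exactly because $t_0>d/2$) yields the bound $\lesssim\mathcal{N}^s(\mathrm{G})\,|f|_{H^{t_0+1}}\,|g|_{H^{s-1}}$.

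On $\mathcal{R}_{\mathrm{high}}$, where $\langle\xi-\eta\rangle\le2\langle\eta\rangle$ and $\langle\xi\rangle\lesssim\langle\eta\rangle$, only the crude bound $|G(\xi)-G(\xi-\eta)|\lesssim\mathcal{N}^s(\mathrm{G})\langle\eta\rangle^s$ is available. If $0\le s\le1$, then $\langle\xi-\eta\rangle^{1-s}\lesssim\langle\eta\rangle^{1-s}$ on $\mathcal{R}_{\mathrm{high}}$, so the kernel is $\lesssim\mathcal{N}^s(\mathrm{G})\langle\eta\rangle|\hat f(\eta)|$ and one concludes as above, now using $\int\langle\eta\rangle|\hat f(\eta)|\,d\eta\lesssim|f|_{H^{t_0+1}}$ and $\max\{t_0+1,s\}=t_0+1$. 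If $s>1$ — the only genuinely delicate case, since here the commutator provides no gain — I would discard the cutoff and bound the $\mathcal{R}_{\mathrm{high}}$-contribution by $\mathcal{N}^s(\mathrm{G})\int_{\mathbb{R}^d}\langle\eta\rangle^s|\hat f(\eta)|\,|\hat g(\xi-\eta)|\,d\eta$; removing Fourier phases, this equals (up to a constant) $\mathcal{N}^s(\mathrm{G})\,|\widehat{F\,g^{\#}}(\xi)|$ for functions $F,g^{\#}$ with nonnegative Fourier transforms and $|F|_{H^{\sigma}}=|f|_{H^{s+\sigma}}$, $|g^{\#}|_{H^{\sigma}}=|g|_{H^{\sigma}}$. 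Taking $L^2$ in $\xi$ and applying the classical Sobolev product estimate $|uv|_2\lesssim|u|_{H^{\sigma_1}}|v|_{H^{\sigma_2}}$ (valid for $\sigma_1,\sigma_2\ge0$ with $\sigma_1+\sigma_2>d/2$) with $\sigma_2=s-1$ and $\sigma_1=\max\{t_0+1-s,0\}$ — so that $s+\sigma_1=\max\{t_0+1,s\}$ and $\sigma_1+\sigma_2=\max\{t_0,s-1\}>d/2$ — gives $\lesssim\mathcal{N}^s(\mathrm{G})\,|f|_{H^{\max\{t_0+1,s\}}}\,|g|_{H^{s-1}}$. Summing the two regions proves \eqref{Commutator estimates}.

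I expect the step for $\mathcal{R}_{\mathrm{high}}$ with $s>1$ to be the main obstacle: when $f$ carries the top frequency there is no smoothing from the commutator, a naive use of Young's inequality loses $d/2$ derivatives on $f$ or on $g$, and it is the Sobolev product estimate — the point where the hypothesis $t_0>d/2$ is used a second time — that both saves the estimate and forces the regularity $H^{\max\{t_0+1,s\}}$ on $f$ appearing in the statement. An alternative organization that isolates this difficulty is to write $\mathrm{G}=\mathrm{m}\,\mathrm{J}^s$ with $\mathrm{m}\in\mathcal{S}^0$ and $\mathcal{N}^0(\mathrm{m})\lesssim\mathcal{N}^s(\mathrm{G})$, and to use the identity $[\mathrm{G},f]=\mathrm{m}\,[\mathrm{J}^s,f]+[\mathrm{m},f]\,\mathrm{J}^s$ together with the $L^2$-boundedness of $\mathrm{m}$: this reduces \eqref{Commutator estimates} to the classical Kato–Ponce commutator estimate for $\mathrm{J}^s$ and to the order-zero case $s=0$, the latter handled by the two-region Fourier argument above with no product estimate needed.
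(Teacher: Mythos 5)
The paper does not prove this proposition itself: it defers to Appendix~B.2 of Lannes's monograph \cite{WWP}. Your argument is therefore a self-contained replacement rather than a reproduction of the paper's own proof, and I have checked it line by line: it is correct. The Fourier-side kernel identity, the two symbol bounds ($|G(\xi)|\le\mathcal{N}^s(\mathrm{G})\langle\xi\rangle^s$ and the mean-value bound using $|\nabla G(\omega)|\le\mathcal{N}^s(\mathrm{G})\langle\omega\rangle^{s-1}$, both of which are indeed controlled by $\mathcal{N}^s$ since $\lceil d/2\rceil+d+2\ge 4>1$ for $d=1,2$), and the split $\{2\langle\eta\rangle\le\langle\xi-\eta\rangle\}$ versus its complement are all in order. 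On $\mathcal{R}_{\mathrm{low}}$ the comparability $\langle\xi-\eta+t\eta\rangle\sim\langle\xi-\eta\rangle$ does hold uniformly in $t\in[0,1]$ (both the upper and the lower bound survive because $\langle\xi-\eta\rangle\ge2$ forces $|\xi-\eta|\ge\frac{\sqrt3}{2}\langle\xi-\eta\rangle>|\eta|$), and Young's inequality combined with $\int|\eta||\hat f|\,d\eta\lesssim|f|_{H^{t_0+1}}$ (Cauchy--Schwarz against the integrable weight $\langle\eta\rangle^{-2t_0}$, which is where $t_0>d/2$ enters the first time) closes that region. On $\mathcal{R}_{\mathrm{high}}$ with $s\le1$ the same Young argument works, and with $s>1$ the reduction to the Sobolev product estimate $|uv|_{L^2}\lesssim|u|_{H^{\sigma_1}}|v|_{H^{\sigma_2}}$ is applied with $\sigma_1=\max\{t_0+1-s,0\}\ge0$, $\sigma_2=s-1>0$, $\sigma_1+\sigma_2=\max\{t_0,s-1\}>d/2$; this product estimate is valid under exactly those hypotheses (choose H\"older exponents $\tfrac1{p_1}\in(\max\{0,\tfrac12-\tfrac{\sigma_1}{d}\},\min\{\tfrac12,\tfrac{\sigma_2}{d}\})$, an interval which is nonempty precisely when $\sigma_1+\sigma_2>d/2$), and it produces the exponent $\max\{t_0+1,s\}$ on $f$, correctly accounting for the regularity in the statement.

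Since the paper's ``proof'' is a citation, a comparison of approaches against the paper text is not possible; what you supply is effectively the content of the cited reference in condensed form. Your closing remark --- that the genuinely delicate configuration is $s>1$ with $f$ at high frequency, where the commutator structure gives no gain and the hypothesis $t_0>d/2$ must be used a second time through a product estimate --- correctly isolates where the constant $\max\{t_0+1,s\}$ comes from. The alternative factorization $\mathrm{G}=\mathrm{m}\,\mathrm{J}^s$ with $[\mathrm{G},f]=\mathrm{m}[\mathrm{J}^s,f]+[\mathrm{m},f]\mathrm{J}^s$ is also a legitimate route, though one should be aware that Kato--Ponce for $[\mathrm{J}^s,f]$ in $L^2$ again needs a H\"older/Sobolev pairing on the term $|\mathrm{J}^s f|_{L^{p_3}}|g|_{L^{p_4}}$, so the $s>1$ difficulty is relocated rather than avoided. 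One cosmetic point: your bound, like any proof along these lines, carries an implicit constant $C(s,d,t_0)$; the proposition as typeset suppresses it, which you should read as a harmless abuse of notation.
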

    \begin{proof}
        See Appendix B.2 in \cite{WWP} for the proof of this proposition.
    \end{proof}
	Next, we will need the following results to run  the Bona-Smith argument (provided in the classical paper \cite{BonaSmith75}) on the multiplier $\chi_{\delta}(\mathrm{D})$ defined by:
\begin{Def}\label{regularisation}
	Let $\chi \in \mathscr{S}(\mathbb{R})$ be a real valued function such that $\chi(0) = 1$ and $\int_{\mathbb{R}} \chi (\xi) \mathrm{d} \xi = 1$. Then for $\delta>0$ we define the
	regularisation operators $\mathrm{\chi}_{\delta}(\mathrm{D})$ in frequency by
	\begin{align*}
		\forall f \in L^2(\mathbb{R}), \quad \forall \xi \in \mathbb{R}, \quad \widehat{\mathrm{\chi}_{\delta} f}(\xi) : = \chi(\delta \xi) \hat{f}(\xi).
	\end{align*}
\end{Def}
\noindent
We give the version of the regularisation estimates as presented in \cite{LinaresPonce15} (Proposition $9.1$).
\begin{prop}\label{Rate of Decay in norm} Let $s>0$, $\delta>0$ and $f \in \mathscr{S}(\mathbb{R})$. Then
	%
	%
	%
	\begin{equation}\label{reg 3}
		|\chi_{\delta} (\mathrm{D}) f|_{H^{s+\alpha}} \lesssim \delta^{-\alpha}| f|_{H^s}, \quad \forall \alpha \geq 0,
	\end{equation}
	and
	\begin{equation}\label{reg 2}
		|\chi_{\delta} (\mathrm{D}) f - f|_{H^{s-\beta}} \lesssim \delta^{\beta} |f|_{H^s}, \quad \forall \beta \in [0,s].
	\end{equation}
	Moreover, there holds
	%
	%
	%
	\begin{equation}\label{reg 4}
		|\chi_{\delta}(\mathrm{D})  f - f|_{H^{s-\beta}} \underset{\delta \rightarrow 0}{=}o(\delta^{\beta} ), \: \: \: \quad \forall \beta \in [0,s].
	\end{equation}
\end{prop}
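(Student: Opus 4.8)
The three estimates \eqref{reg 3}, \eqref{reg 2}, \eqref{reg 4} are all consequences of elementary Fourier-side pointwise bounds on the symbol $\chi(\delta\xi)$, combined with the fact that $\chi\in\mathscr S(\mathbb R)$ so that $\chi$ and all its derivatives decay faster than any polynomial. The plan is to pass to the Fourier side in each case and estimate the relevant multiplier in $L^\infty_\xi$ after absorbing the appropriate power of $\langle\xi\rangle$.

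For \eqref{reg 3}: write $|\chi_\delta(\mathrm D)f|_{H^{s+\alpha}}^2=\int_{\mathbb R}\langle\xi\rangle^{2(s+\alpha)}|\chi(\delta\xi)|^2|\hat f(\xi)|^2\,d\xi$, so it suffices to show $\langle\xi\rangle^{\alpha}|\chi(\delta\xi)|\lesssim\delta^{-\alpha}$ uniformly in $\xi$ and $\delta>0$. Splitting into $|\xi|\le 1/\delta$ and $|\xi|\ge 1/\delta$: on the first region $\langle\xi\rangle^\alpha\lesssim\delta^{-\alpha}$ and $|\chi|$ is bounded; on the second region we use the rapid decay of $\chi$, namely $|\chi(\eta)|\lesssim\langle\eta\rangle^{-\alpha}$, to get $\langle\xi\rangle^\alpha|\chi(\delta\xi)|\lesssim\langle\xi\rangle^\alpha\langle\delta\xi\rangle^{-\alpha}\lesssim\delta^{-\alpha}$. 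Taking the sup over both regions gives the claim, and plugging back in yields \eqref{reg 3}.

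For \eqref{reg 2}: similarly it reduces to the pointwise bound $\langle\xi\rangle^{-\beta}|\chi(\delta\xi)-1|\lesssim\delta^{\beta}$ for $\beta\in[0,s]$. Here the key point is that $\chi(0)=1$, so by the fundamental theorem of calculus $|\chi(\eta)-1|=\big|\int_0^\eta\chi'(t)\,dt\big|\le|\eta|\,\|\chi'\|_{L^\infty}$, giving $|\chi(\delta\xi)-1|\lesssim\delta|\xi|$; this handles the region $|\xi|\le1/\delta$ (where $\delta|\xi|\le(\delta|\xi|)^\beta$ is false for $\beta<1$, so instead one writes $|\chi(\delta\xi)-1|\lesssim\min\{1,\delta|\xi|\}\le(\delta|\xi|)^\beta$ since $\delta|\xi|\le1$ there, and $(\delta|\xi|)^\beta\le\delta^\beta\langle\xi\rangle^\beta$). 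On $|\xi|\ge1/\delta$ we bound $|\chi(\delta\xi)-1|\le|\chi(\delta\xi)|+1\lesssim1\le(\delta|\xi|)^\beta\lesssim\delta^\beta\langle\xi\rangle^\beta$. In both regions $\langle\xi\rangle^{-\beta}|\chi(\delta\xi)-1|\lesssim\delta^\beta$, so \eqref{reg 2} follows by inserting into $|\chi_\delta(\mathrm D)f-f|_{H^{s-\beta}}^2=\int\langle\xi\rangle^{2(s-\beta)}|\chi(\delta\xi)-1|^2|\hat f|^2\,d\xi$.

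For \eqref{reg 4}: the refinement from $O(\delta^\beta)$ to $o(\delta^\beta)$ is obtained by a dominated-convergence argument rather than a pointwise symbol bound. Write $\delta^{-2\beta}|\chi_\delta(\mathrm D)f-f|_{H^{s-\beta}}^2=\int_{\mathbb R}\langle\xi\rangle^{2(s-\beta)}\,\delta^{-2\beta}|\chi(\delta\xi)-1|^2\,|\hat f(\xi)|^2\,d\xi$. For each fixed $\xi$, $\delta^{-2\beta}|\chi(\delta\xi)-1|^2\le C|\xi|^{2\beta}$ by the bound $|\chi(\delta\xi)-1|\lesssim\min\{1,\delta|\xi|\}$ used in \eqref{reg 2} (so the integrand is dominated by $C\langle\xi\rangle^{2s}|\hat f|^2\in L^1$), and the integrand tends to $0$ pointwise as $\delta\to0$ since $\chi$ is continuous with $\chi(0)=1$, so $|\chi(\delta\xi)-1|=o(1)$ and hence $\delta^{-2\beta}|\chi(\delta\xi)-1|^2\to0$ for $\xi\neq0$ (it also goes to $0$ at $\xi=0$, where the integrand vanishes identically). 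Dominated convergence then gives that the integral tends to $0$, which is exactly \eqref{reg 4}.

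The only mildly delicate point is the case $\beta<1$ in \eqref{reg 2}, where one must not lose a full power of $\delta|\xi|$ but only the fractional power $(\delta|\xi|)^\beta$; this is handled by interpolating the two trivial bounds $|\chi(\delta\xi)-1|\lesssim1$ and $|\chi(\delta\xi)-1|\lesssim\delta|\xi|$ via $\min\{1,t\}\le t^\beta$ for $t\ge0$ and $\beta\in[0,1]$. Everything else is a routine Plancherel computation, and since the paper attributes these estimates to Proposition 9.1 of \cite{LinaresPonce15} one may alternatively just cite that reference; the argument above is included for completeness.
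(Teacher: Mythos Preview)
The paper does not prove this proposition; it merely cites \cite{LinaresPonce15}, Proposition~9.1. Your self-contained Fourier-side argument is the standard one and is correct for \eqref{reg 3}, and for \eqref{reg 2}--\eqref{reg 4} when $\beta\in[0,1]$.

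There is, however, a genuine gap when $\beta>1$. The inequality you invoke, $\min\{1,t\}\le t^{\beta}$ for $t\in[0,1]$, holds only for $\beta\in[0,1]$; for $\beta>1$ and $t<1$ one has $t^{\beta}<t$, so the bound goes the wrong way. The same issue breaks your dominated-convergence proof of \eqref{reg 4}: the claimed majorant $\delta^{-2\beta}|\chi(\delta\xi)-1|^{2}\le C|\xi|^{2\beta}$ is false for $\beta>1$ in the region $\delta|\xi|\ll1$. In fact, under the sole hypothesis $\chi(0)=1$ of Definition~\ref{regularisation}, the estimates \eqref{reg 2}--\eqref{reg 4} are \emph{not true} for $\beta>1$ in general: if $\chi'(0)\neq0$ and one takes $f\in\mathscr S(\mathbb R)$ with $\hat f$ a smooth bump supported near a fixed nonzero frequency, then $|\chi_{\delta}(\mathrm D)f-f|_{H^{s-\beta}}\sim\delta\,|f|_{H^{s}}$, which is not $O(\delta^{\beta})$ for $\beta>1$. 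To obtain the full range $\beta\in[0,s]$ one needs additional structure on $\chi$ (for instance $\chi\equiv1$ in a neighbourhood of the origin, or sufficiently many derivatives of $\chi$ vanishing at $0$), which is the choice implicitly made in the Bona--Smith constructions in the literature. This does not affect the application in the present paper, where only $\alpha,\beta\le1$ are actually needed (cf.\ Remark~\ref{divergence est}); but you should either restrict the stated range of $\beta$, or add a hypothesis on $\chi$ and use a higher-order Taylor expansion at $0$ in place of the first-order bound $|\chi(\eta)-1|\lesssim|\eta|$.
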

    
	Lastly, we need an interpolation inequality. In particular, for any $s,r,t \in \R$ such that $r < t$ and $s\in(r,t)$, then for $\theta \in (0,1)$ given by $ \theta + (1-\theta) = \frac{t-s}{t-r} + \frac{s-r}{t-r} $, we have by Plancherel's identity and Hölder's inequality that
    \begin{align}\label{Interpolation}
        |f|_{H^s}
        \leq 
         |f|_{H^{r}}^{\theta} |f|_{H^{t}}^{(1-\theta)}.
    \end{align} \\

    \subsection{Properties of $\mathrm{F}$}\label{PropF} In this section, we prove estimates concerning the dispersive properties of the equation. 

        \begin{prop} Let $s\in \R$ and $f \in \mathscr{S}(\R)$, then there exist $c>0$ such that
            \begin{equation}\label{Equivalence of norms}
               c^{-1}|f|_{X^s_{\mu}}^2 
 \leq |f|_{H^s}^2 + \mu |\mathrm{F}^{\frac{1}{2}}\partial_x f|_{H^s}^2 \leq c|f|_{X^s_{\mu}}^2,
            \end{equation}
        \begin{equation}\label{F -1/2 }
            |\mathrm{F}^{-\frac{1}{2}}f|_{H^s}\leq c |f|_{X_{\mu}^s},
        \end{equation}

        \begin{equation}\label{Kill a derivative}
            \sqrt{\mu} |\mathrm{F}^1f|_{H^s} \leq c |f|_{H^{s-1}},
        \end{equation}

		 \begin{equation}\label{Kill half derivative}
			\mu^{\frac{1}{4}} |\mathrm{F}^{\frac{1}{2}}f|_{H^s} \leq c |f|_{H^{s-\frac{1}{2}}}.
		\end{equation}
        \end{prop}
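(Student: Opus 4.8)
The plan is to reduce all four inequalities to elementary two-sided bounds on the Fourier symbol and then conclude with Plancherel's identity. Set $m(y):=\frac{3}{y^2}\big(\frac{y}{\tanh y}-1\big)$ for $y\neq0$ and $m(0):=1$, so that under the substitution $y=\sqrt{\mu}\,\xi$ one has $F^{\frac12}(\xi)^2=m(\sqrt{\mu}\,\xi)$. First I would record the basic properties of $m$. Using the expansion $\frac{y}{\tanh y}=1+\frac{y^2}{3}+O(y^4)$ near $0$ and the elementary inequality $0<\tanh y<y$ for $y>0$ (together with evenness of $\frac{y}{\tanh y}$), the function $m$ extends to a continuous, even, strictly positive function on $\R$. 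Moreover $\langle y\rangle\,m(y)$ is continuous and positive, equals $1$ at $y=0$, and tends to $3$ as $|y|\to\infty$ (since $\frac{y}{\tanh y}-1\sim|y|$), hence by the extreme value theorem there exist constants $0<c_0\le C_0$ with
\[
\frac{c_0}{\langle y\rangle}\ \le\ m(y)\ \le\ \frac{C_0}{\langle y\rangle},\qquad y\in\R .
\]
In particular $m(y)\le C_0$, $|y|\,m(y)\le C_0$, and $y^2 m(y)=3\big(\frac{y}{\tanh y}-1\big)\asymp \frac{y^2}{\langle y\rangle}$.

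Next I would prove \eqref{Equivalence of norms}. By Plancherel, $|f|_{X^s_{\mu}}^2=\int_{\R}(1+\sqrt{\mu}|\xi|)\langle\xi\rangle^{2s}|\hat f(\xi)|^2\,d\xi$, while $|f|_{H^s}^2+\mu|\mathrm{F}^{\frac12}\partial_x f|_{H^s}^2=\int_{\R}\big(1+\mu\xi^2 m(\sqrt{\mu}\xi)\big)\langle\xi\rangle^{2s}|\hat f(\xi)|^2\,d\xi$. Since $\mu\xi^2=y^2$ with $y=\sqrt{\mu}\xi$, it suffices to check the uniform pointwise equivalence $1+y^2 m(y)\asymp 1+|y|$, which is immediate from the bound $y^2 m(y)\asymp\frac{y^2}{\langle y\rangle}$ above together with the elementary fact $1+\frac{y^2}{\langle y\rangle}\asymp 1+|y|$ (split into $|y|\le1$ and $|y|\ge1$).

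For \eqref{F -1/2 }, \eqref{Kill a derivative}, \eqref{Kill half derivative} I would use two further symbol bounds. First, $m(y)^{-1}\le c_0^{-1}\langle y\rangle=c_0^{-1}\sqrt{1+\mu\xi^2}\le c_0^{-1}(1+\sqrt{\mu}|\xi|)$, so that by Plancherel $|\mathrm{F}^{-\frac12}f|_{H^s}^2=\int_{\R} m(\sqrt{\mu}\xi)^{-1}\langle\xi\rangle^{2s}|\hat f|^2\,d\xi\le c_0^{-1}|f|_{X^s_{\mu}}^2$, which is \eqref{F -1/2 }. Second, using $\mu\le\mu_{\max}$ and the bounds on $m$,
\[
\sqrt{\mu}\,\langle\xi\rangle\, m(\sqrt{\mu}\xi)=\sqrt{\mu+\mu\xi^2}\;m(y)\le(\sqrt{\mu}+|y|)\,m(y)\le \sqrt{\mu_{\max}}\,C_0+C_0=:c .
\]
Then \eqref{Kill a derivative} follows because $\mu\langle\xi\rangle^2 m(\sqrt{\mu}\xi)^2=\big(\sqrt{\mu}\langle\xi\rangle m(\sqrt{\mu}\xi)\big)^2\le c^2$ gives $\sqrt{\mu}|\mathrm{F}^{1}f|_{H^s}^2=\mu\int_{\R} m(\sqrt{\mu}\xi)^2\langle\xi\rangle^{2s}|\hat f|^2\le c^2\int_{\R}\langle\xi\rangle^{2s-2}|\hat f|^2=c^2|f|_{H^{s-1}}^2$; and \eqref{Kill half derivative} follows because $\sqrt{\mu}\langle\xi\rangle m(\sqrt{\mu}\xi)\le c$ gives $\sqrt{\mu}|\mathrm{F}^{\frac12}f|_{H^s}^2=\sqrt{\mu}\int_{\R} m(\sqrt{\mu}\xi)\langle\xi\rangle^{2s}|\hat f|^2\le c\int_{\R}\langle\xi\rangle^{2s-1}|\hat f|^2=c\,|f|_{H^{s-1/2}}^2$.

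The only genuine content is the two-sided symbol estimate $m(y)\asymp\langle y\rangle^{-1}$; once this is in hand, everything else is Plancherel plus elementary manipulation of symbols, and I expect no real obstacle beyond bookkeeping. The mildly delicate point is controlling $\frac{y}{\tanh y}$: that it is smooth with value $1$ at the origin (so $m$ extends continuously and remains positive) and grows linearly at infinity (so $\langle y\rangle m(y)$ has a finite positive limit). I would also emphasize that uniformity in $\mu\in(0,\mu_{\max}]$ is automatic, since after the substitution $y=\sqrt{\mu}\xi$ the symbol inequalities for \eqref{Equivalence of norms} and \eqref{F -1/2 } are $\mu$-free, and in \eqref{Kill a derivative}, \eqref{Kill half derivative} the only use of $\mu$ is the bound $\mu\le\mu_{\max}$.
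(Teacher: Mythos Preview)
Your proof is correct and follows essentially the same approach as the paper: both reduce everything to the low/high-frequency behavior of the symbol $F^{\frac12}(\xi)^2=m(\sqrt{\mu}\,\xi)$ and conclude by Plancherel. Your packaging via the substitution $y=\sqrt{\mu}\,\xi$ and the single two-sided bound $m(y)\asymp\langle y\rangle^{-1}$ is slightly cleaner than the paper's separate asymptotic statements and frequency splitting (and makes the uniformity in $\mu\in(0,\mu_{\max}]$ more transparent), but the underlying argument is the same.
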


        \begin{proof}
            The behaviour at low frequency of the three Fourier multipliers $\mathrm{F}^{\frac{1}{2}}, \mathrm{F}^{-\frac{1}{2}}$ and $\mathrm{F}^1$ at low frequency is
            \begin{align*}
                F^{\frac{1}{2}}(\xi), F^{-\frac{1}{2}}(\xi), F^1(\xi) \underset{0}{\sim} 1. 
            \end{align*}
            At high frequency, their respective behavior is
            \begin{align*}
                F^{\frac{1}{2}}(\xi) \underset{\infty}{\sim} \dfrac{1}{\mu^{\frac{1}{4}}\sqrt{|\xi|}}, \ \ F^{-\frac{1}{2}}(\xi) \underset{\infty}{\sim} \mu^{\frac{1}{4}}\sqrt{|\xi|}, \ \ F^1(\xi) \underset{\infty}{\sim} \dfrac{1}{\sqrt{\mu}|\xi|}.
            \end{align*}
            This gives us \eqref{F -1/2 }, \eqref{Kill a derivative}, \eqref{Kill half derivative}, and the right-hand side inequality of \eqref{Equivalence of norms}. It only remains to prove the left-hand side inequality of \eqref{Equivalence of norms}:
            \begin{align*}
                | f |_{X^{s}_{\mu}}^2 = | f |_{H^s}^2 + \sqrt{\mu}|\mathrm{D}^{\frac{1}{2}}f |_{H^s}^2 
            \end{align*}
            Now,  let $\mathcal{F}(\mathbbm{1}_{\{\sqrt{\mu}|\mathrm{D}|\leq 1 \}}f) (\xi) = \mathbbm{1}_{\{\sqrt{\mu}|\xi|\leq 1\}}\hat{f}(\xi)$ where $\mathbbm{1}_{\{\sqrt{\mu}|\xi|\leq 1\}}$ is the usual indicator function supported on the frequencies $\sqrt{\mu}|\xi| \leq 1$. Then we get that
            \begin{align*}
                \sqrt{\mu}|\mathrm{D}^{\frac{1}{2}}f |_{H^s}^2 &= \sqrt{\mu}| \mathbbm{1}_{\{\sqrt{\mu}|\mathrm{D}|\leq 1 \}} \mathrm{D}^{\frac{1}{2}}\mathrm{J}^s f|_{L^2}^2 + \sqrt{\mu}| \mathbbm{1}_{\{\sqrt{\mu}|\mathrm{D}|>1 \}} \mathrm{D}^{\frac{1}{2}}\mathrm{J}^s f|_{L^2}^2\\
                &
                \lesssim 
                \sqrt{\mu}| \mathbbm{1}_{\{\sqrt{\mu}|\mathrm{D}|\leq 1 \}} \mathrm{F}^{\frac{1}{2}}\mathrm{D}^{\frac{1}{2}}\mathrm{J}^s f|_{L^2}^2 
                +
                \mu^{\frac{3}{2}} | \mathbbm{1}_{\{\sqrt{\mu}|\mathrm{D}|>1 \}} \mathrm{F}^1 \mathrm{D}^{\frac{1}{2}}\mathrm{J}^s f|_{L^2}^2\\
                &\lesssim | f|_{H^s}^2 + \mu | \mathrm{F}^{\frac{1}{2}} f |_{H^s}^2.
            \end{align*}
        \end{proof}

        \begin{prop}  Let $f,g \in \mathscr{S}(\R)$ and $t_0>\frac{1}{2}$. Then for $s\geq \frac{1}{2} $ there holds,
        \begin{equation}\label{Commutator: J_sF_half}
            |[\mathrm{J}^s\mathrm{F}^{\frac{1}{2}}, f]g|_{L^2} \lesssim |f|_{H^{\max\{t_0+1,s-\frac{1}{2}\}}} |\mathrm{F}^{\frac{1}{2}}g|_{H^{s-1}}.
        \end{equation}
    	In the case $s\geq 1$, there holds
    	\begin{equation}\label{Commutator: J Fdx}
    		|[\mathrm{J}^s, \mathrm{F}^{\frac{1}{2}}\partial_x (f \cdot)] \partial_x g |_{L^2} \lesssim |\mathrm{F}^{\frac{1}{2}} \partial_x f |_{H^s} |\partial_xg|_{H^{t_0}} 
            +
            |\mathrm{F}^{\frac{1}{2}} \partial_xg |_{H^s} |\partial_xf|_{H^{t_0}}.
    	\end{equation}
        Moreover, in the case $s=0$ we have that
        %
        %
        %
    	%
    	%
    	%
        \begin{equation}\label{Commutator: F_half}
            |[\mathrm{F}^{\frac{1}{2}},f] g |_{L^2} \lesssim |f|_{X^{t_0+1}_{\mu}} |\mathrm{F}^{\frac{1}{2}} g|_{H^{-1}}.
        \end{equation}

\
        \end{prop}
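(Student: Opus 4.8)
The first thing to record, using only the low/high frequency asymptotics $F^{\pm\frac12}(\xi)\sim\langle\sqrt\mu\,\xi\rangle^{\mp\frac12}$ and $|(\mathrm{F}^{\frac12})'(\xi)|\lesssim\sqrt\mu\,\langle\sqrt\mu\,\xi\rangle^{-3/2}$ already obtained above, is that $\mathrm{F}^{\frac12}\in\mathcal S^{0}$, $\mathrm{F}^{-\frac12}\in\mathcal S^{\frac12}$, and that the renormalised operators $\mathrm{P}:=\mathrm{J}^{\frac12}\mathrm{F}^{\frac12}$ and $\mathrm{P}^{-1}=\mathrm{J}^{-\frac12}\mathrm{F}^{-\frac12}$ belong to $\mathcal S^{\frac12}$ and $\mathcal S^{0}$ respectively, all with $\mathcal N$–seminorms bounded uniformly in $(\mu,\ve,\beta)\in\mathcal A_{\mathrm{SW}}$. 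This is what makes the general commutator estimate \eqref{Commutator estimates} applicable with a uniform constant to $\mathrm{J}^{s-\frac12}$, $\mathrm{P}$ and $\mathrm{P}^{-1}$; one also uses repeatedly the elementary identity $|\mathrm{P}h|_{H^{\sigma}}=|\mathrm{F}^{\frac12}h|_{H^{\sigma+\frac12}}$ and the conjugation rule $[\mathrm{Q},f]=-\mathrm{Q}\,[\mathrm{Q}^{-1},f]\,\mathrm{Q}$ valid for an invertible Fourier multiplier $\mathrm{Q}$.

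\textbf{Step 1: the estimate \eqref{Commutator: J Fdx}.} Since $\mathrm{J}^{s}$ and $\mathrm{F}^{\frac12}\partial_x$ are Fourier multipliers they commute, which yields the exact identity
\[
[\mathrm{J}^{s},\mathrm{F}^{\frac12}\partial_x(f\,\cdot)]\partial_x g=\mathrm{F}^{\frac12}\partial_x\big([\mathrm{J}^{s},f]\partial_x g\big),
\]
so it suffices to bound $\big|\mathrm{F}^{\frac12}\partial_x\big([\mathrm{J}^{s},f]\partial_x g\big)\big|_{L^2}$ by the (symmetric) right–hand side. I would do this with a Bony/paraproduct splitting of $[\mathrm{J}^{s},f]\partial_xg$: on the ``$f$ at low frequency'' piece the commutator is $\simeq(\partial_xf)\,\mathrm{J}^{s-1}\partial_xg$, and because multiplication by the slowly varying $\partial_xf$ nearly commutes with $\mathrm{F}^{\frac12}\partial_x\mathrm{J}^{s-1}$ one is left with $|\partial_xf|_{L^\infty}\,|\mathrm{F}^{\frac12}\partial_x g|_{\dot{H}^{s}}\lesssim|\partial_xf|_{H^{t_0}}|\mathrm{F}^{\frac12}\partial_xg|_{H^{s}}$; on the ``$f$ at high frequency'' and on the high–high pieces the operator $\mathrm{F}^{\frac12}\partial_x\mathrm{J}^{s}$ acts at the frequency carried by $f$ and lands on $f$, giving $|\mathrm{F}^{\frac12}\partial_xf|_{H^{s}}\,|\partial_xg|_{L^\infty}\lesssim|\mathrm{F}^{\frac12}\partial_xf|_{H^{s}}|\partial_xg|_{H^{t_0}}$ after an almost–orthogonal summation in the Littlewood–Paley index. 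This step is pure book-keeping and uses no idea beyond the identity above.

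\textbf{Step 2: the estimates \eqref{Commutator: J_sF_half} and \eqref{Commutator: F_half}.} Factor $\mathrm{J}^{s}\mathrm{F}^{\frac12}=\mathrm{J}^{s-\frac12}\mathrm{P}$ and split
\[
[\mathrm{J}^{s}\mathrm{F}^{\frac12},f]g=[\mathrm{J}^{s-\frac12},f]\,\mathrm{P}g+\mathrm{J}^{s-\frac12}[\mathrm{P},f]g .
\]
The first term is immediate from \eqref{Commutator estimates} applied with $\mathrm{G}=\mathrm{J}^{s-\frac12}\in\mathcal S^{s-\frac12}$ (for $s=0$ the version for negative orders), together with $|\mathrm{P}g|_{H^{s-\frac32}}=|\mathrm{F}^{\frac12}g|_{H^{s-1}}$: it produces exactly the factor $|f|_{H^{\max\{t_0+1,s-\frac12\}}}\,|\mathrm{F}^{\frac12}g|_{H^{s-1}}$ (for $s=0$ one gets $|f|_{H^{t_0+1}}\le|f|_{X^{t_0+1}_{\mu}}$). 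For the second term I would use the conjugation identity $[\mathrm{P},f]g=-\mathrm{P}\,[\mathrm{P}^{-1},f]\,\mathrm{P}g$; since $\mathrm{P}^{-1}\in\mathcal S^{0}$ the inner commutator is of order $-1$, and away from frequencies $\gtrsim\mu^{-1/2}$ the $H^{\sigma}$–form of \eqref{Commutator estimates} (obtained from $\mathrm{J}^{\sigma}[\mathrm{P}^{-1},f]=[\mathrm{J}^{\sigma}\mathrm{P}^{-1},f]-[\mathrm{J}^{\sigma},f]\mathrm{P}^{-1}$) closes the estimate with the $\mathrm{F}^{\frac12}g$–norm reappearing through $|\mathrm{P}g|_{H^{s-\frac32}}$. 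The genuinely delicate contribution is the complementary region, where $f$ oscillates faster than $g$ and $\mathrm{F}^{\frac12}$ is really smoothing: there I would estimate the non-local piece directly on the Fourier side, writing the symbol $F^{\frac12}(\xi)-F^{\frac12}(\eta)$, using on it the pointwise bound $|F^{\frac12}(\xi)-F^{\frac12}(\eta)|\le F^{\frac12}(\xi)+F^{\frac12}(\eta)\lesssim\langle\sqrt\mu\,\xi\rangle^{-1/2}+\langle\sqrt\mu\,\eta\rangle^{-1/2}$ (this $O(-\tfrac12)$ decay is what renders the kernel square–integrable), and absorbing the high frequency of $f$ by factoring $\widehat f(\xi-\eta)=\big(\langle\xi-\eta\rangle^{t_0+1}(1+\sqrt\mu|\xi-\eta|^{\frac12})\big)^{-1}\widehat F(\xi-\eta)$ with $|F|_{L^2}\lesssim|f|_{X^{t_0+1}_{\mu}}$ in the case $s=0$ (and by the half-derivative slack of $H^{s-\frac12}$ for $s\ge\frac12$) before running a Schur-type bound on the resulting kernel; this is precisely why the statement of \eqref{Commutator: F_half} carries the stronger norm $|f|_{X^{t_0+1}_{\mu}}$.

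\textbf{Main obstacle.} All the algebra localises the problem onto the single non-local object $[\mathrm{F}^{\frac12},f]g$; the crux is the frequency region where $\mathrm{F}^{\frac12}$ is genuinely smoothing and $f$ carries the top frequency, where a soft application of \eqref{Commutator estimates} is off by a factor $\mu^{-1/4}$. Recovering a constant independent of $\mu$ forces the explicit comparison $F^{\pm\frac12}(\xi)\sim\langle\sqrt\mu\,\xi\rangle^{\mp\frac12}$ and a careful dyadic/kernel analysis in which the $-\tfrac12$-order decay of $\mathrm{F}^{\frac12}$ is traded against the derivative count on $f$ — the half derivative of $H^{s-\frac12}$, respectively the $\sqrt\mu\,\mathrm{D}^{\frac12}$ component of the $X^{t_0+1}_{\mu}$–norm. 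That balancing is the only non-routine part of the proof.
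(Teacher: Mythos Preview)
Your strategy differs substantially from the paper's, which is much more direct and concrete on the Fourier side. Let me describe the discrepancies and flag the places where your sketch would need real work before it becomes a proof.

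\textbf{On \eqref{Commutator: J_sF_half}.} The paper does this in one stroke: it treats $\mathrm{J}^s\mathrm{F}^{\frac12}$ as a single multiplier of order $s-\tfrac12$, records that $\mathcal N^{s-\frac12}(\mathrm{J}^s\mathrm{F}^{\frac12})\lesssim\mu^{-1/4}$, applies \eqref{Commutator estimates}, and then converts $\mu^{-1/4}|g|_{H^{s-3/2}}$ back into $|\mathrm{F}^{\frac12}g|_{H^{s-1}}$. Your factoring $\mathrm{J}^s\mathrm{F}^{\frac12}=\mathrm{J}^{s-\frac12}\mathrm{P}$ and subsequent conjugation $[\mathrm{P},f]=-\mathrm{P}[\mathrm{P}^{-1},f]\mathrm{P}$ is a legitimate alternative, but it replaces one line by an argument that still leaves a ``delicate contribution'' to be handled by an unspecified kernel/Schur bound. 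Note also that your claim $\mathrm{P}=\mathrm{J}^{\frac12}\mathrm{F}^{\frac12}\in\mathcal S^0$ with \emph{uniform} seminorm is false (at high frequency $P(\xi)\sim\mu^{-1/4}$); you only get $\mathrm{P}\in\mathcal S^{\frac12}$ uniformly, which is why after conjugation you still have work to do.

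\textbf{On \eqref{Commutator: J Fdx}.} Your algebraic reduction $[\mathrm{J}^s,\mathrm{F}^{\frac12}\partial_x(f\cdot)]\partial_xg=\mathrm{F}^{\frac12}\partial_x\big([\mathrm{J}^s,f]\partial_xg\big)$ is correct, and a Littlewood--Paley argument can certainly be pushed through. The paper instead estimates the bilinear symbol $F^{\frac12}(\xi)|\xi|\,|\langle\xi\rangle^s-\langle\rho\rangle^s|$ directly, splitting only into $|\xi|\le|\rho|$ and $|\rho|\le|\xi|$, and using the mean value theorem together with the monotonicity of $\omega\mapsto F^{\frac12}(\omega)|\omega|$ to place the weight on the larger frequency. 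This is elementary and avoids paraproducts entirely. Your sentence ``multiplication by the slowly varying $\partial_xf$ nearly commutes with $\mathrm{F}^{\frac12}\partial_x\mathrm{J}^{s-1}$'' hides another $\mu$-dependent commutator that would itself need a uniform estimate; this is exactly the kind of step the paper's direct approach sidesteps.

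\textbf{On \eqref{Commutator: F_half}.} Here the gap in your sketch is most visible. The paper proves the pointwise kernel bound
\[
\big|F^{\frac12}(\xi)-F^{\frac12}(\rho)\big|\,F^{-\frac12}(\rho)\langle\rho\rangle\;\lesssim\;1+|\xi-\rho|\,F^{-\frac12}(\xi-\rho),
\]
case by case, and the key ingredient is the monotonicity-based inequality $\frac{|\xi|}{|\rho|}\le\big(\frac{F(\rho)}{F(\xi)}\big)^{1/2}\le1$ for $|\xi|\le|\rho|$, which extracts a factor $|\xi-\rho|$ from the symbol \emph{difference}. Your proposed bound $|F^{\frac12}(\xi)-F^{\frac12}(\eta)|\le F^{\frac12}(\xi)+F^{\frac12}(\eta)$ discards precisely this cancellation; in the regime where $g$ carries the top frequency (which you do not discuss) the triangle inequality yields no gain at all, and the $|\mathrm{F}^{\frac12}g|_{H^{-1}}$ on the right-hand side cannot be recovered. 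The appearance of the $X^{t_0+1}_\mu$-norm on $f$ in the paper's estimate comes out naturally from the kernel bound above (via the term $|\xi-\rho|F^{-\frac12}(\xi-\rho)$ landing on $\hat f$), not from a Schur argument.

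In short: your outline is a reasonable alternative \emph{programme}, but in the two places you yourself call ``delicate'' the argument is only gestured at, and for \eqref{Commutator: F_half} the specific bound you propose on the symbol difference is too crude to close the estimate in all frequency interactions. The paper's route---explicit pointwise symbol inequalities exploiting the monotonicity and exact high/low asymptotics of $F^{\frac12}$---is both shorter and avoids these pitfalls.
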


        \begin{proof} 
            To prove \eqref{Commutator: J_sF_half} we note that the Fourier multiplier $\mathrm{J}^s \mathrm{F}^{\frac{1}{2}}$ is of order $s-\frac{1}{2}$ in the sense of Definition \ref{definition order of a Fourier multiplier}. Moreover,  we observe that
            \begin{align*}
                \mathcal{N}^{s-\frac{1}{2}}(\mathrm{J}^s \mathrm{F}^{\frac{1}{2}}) \lesssim \mu^{-\frac{1}{4}}.
            \end{align*}
            Thanks to the commutator estimates of Proposition \ref{Commutator estimates} and estimate \eqref{Equivalence of norms}, we have
            \begin{align*}
                |[\mathrm{J}^s\mathrm{F}^{\frac{1}{2}}, f]g|_{L^2} \lesssim \mu^{-\frac{1}{4}}|f|_{H^{\max\{t_0+1,s-\frac{1}{2}\}}} | g |_{H^{s-\frac{3}{2}}} \lesssim |f|_{H^{\max\{t_0+1,s-\frac{1}{2}\}}} |\mathrm{F}^{\frac{1}{2}}g|_{H^{s-1}},
            \end{align*}
            and proves estimate \eqref{Commutator: J_sF_half}.\\

            For the proof of \eqref{Commutator: J Fdx}, we start by estimating the bilinear form: 
            $$\mathrm{a}(\mathrm{D})(f,g) = [\mathrm{J}^s, \mathrm{F}^{\frac{1}{2}}\partial_x (f \cdot)] \partial_x g,$$ 
            given by
            \begin{align*}
            	|\hat{a}(\xi)(f,g)|  \leq  \int_{\R} F^{\frac{1}{2}}(\xi) |\xi|\big{|} \langle \xi \rangle^s - \langle \rho \rangle^s \big{|} |\hat{f}(\xi-\rho)| \: |\widehat{\partial_x g}(\rho)| \: d\rho.
            \end{align*}
            First, if $|\xi| \leq |\rho|$ we can use the mean value theorem to deduce that
            \begin{align*}
            	|\hat{a}(\xi)(f,g)|  \lesssim  \int_{\R} F^{\frac{1}{2}}(\rho) |\rho|  \langle \rho \rangle^{s-1} |\xi - \rho| |\hat{f}(\xi-\rho)| \: |\widehat{\partial_x g}(\rho)| \: d\rho,
            \end{align*}
            since  both $\omega \mapsto \langle \omega \rangle^{s-1}$ and $\omega \mapsto F^{\frac{1}{2}}(\omega) |\omega|$ are increasing functions. By extension, we make a change of variable $\gamma = \xi - \rho$,  apply Minkowski integral inequality and Cauchy-Schwarz to find the estimate
            \begin{align*}
            	|[\mathrm{J}^s, \mathrm{F}^{\frac{1}{2}}\partial_x (f \cdot)] \partial_x g |_{L^2}
            	&
            	\lesssim 
            	\Big{|}
            	\int_{\R} F^{\frac{1}{2}}(\cdot - \gamma ) |\cdot - \gamma|  \langle \cdot - \gamma \rangle^{s-1} |\gamma| |\hat{f}(\gamma)| \: |\widehat{\partial_x g}(\cdot - \gamma)| \: d\gamma
            	\Big{|}_{L^2_{\xi}}
            	\\
            	& 
            	\lesssim 
            	| \mathrm{F}^{\frac{1}{2}}  \partial_x g
            	|_{H^s}
            	\int_{\R}  |\gamma| |\hat{f}(\gamma)|  \: d\gamma
            	\\
            	& 
            	\lesssim 
            	|\mathrm{F}^{\frac{1}{2}} \partial_x g |_{H^s} |\partial_xf|_{H^{t_0}}.
            \end{align*}
            On the other hand, when $|\rho| \leq |\xi|$ then we can argue similarly to find that
            \begin{align*}
            	|\hat{a}(\xi)(f,g)|  
            	& \lesssim  
            	\int_{\R} F^{\frac{1}{2}}(\xi - \rho) |\xi - \rho |  \langle \xi - \rho \rangle^{s-1} |\xi - \rho| |\hat{f}(\xi-\rho)| \: |\widehat{\partial_x g}(\rho)| \: d\rho
            	\\
            	& 
            	\hspace{0.5cm}
            	+
            	\int_{\R} F^{\frac{1}{2}}(\rho) |\rho|  \langle \rho \rangle^{s-1} |\xi - \rho| |\hat{f}(\xi-\rho)| \: |\widehat{\partial_x g}(\rho)| \: d\rho,
            \end{align*}
            and as using the estimate above we conclude in this case that
            \begin{align*}
            	|[\mathrm{J}^s, \mathrm{F}^{\frac{1}{2}}\partial_x (f \cdot)] \partial_x g |_{L^2}
            	&
            	\lesssim 
            	\Big{|}
            	\int_{\R} F^{\frac{1}{2}}(\cdot - \rho ) |\cdot - \rho|  \langle \cdot - \rho \rangle^{s-1} |\cdot - \rho| |\hat{f}(\cdot - \rho)| \: |\widehat{\partial_x g}(\rho)| \: d\rho
            	\Big{|}_{L^2_{\xi}}
            	\\
            	& 
            	\hspace{0.5cm}
            	+
            	|\mathrm{F}^{\frac{1}{2}} \partial_x g |_{H^s} |\partial_xf|_{H^{t_0}}
            	\\
            	& 
            	\lesssim 
            	|\mathrm{F}^{\frac{1}{2}} \partial_x f |_{H^s} |\partial_xg|_{H^{t_0}}.
            	+
            	|\mathrm{F}^{\frac{1}{2}} \partial_x g |_{H^s} |\partial_xf|_{H^{t_0}}.
            \end{align*}
            Adding the two cases completes the proof.\\


            Next, we prove \eqref{Commutator: F_half} by estimating the bilinear form:
            \begin{align*}
                |\hat{a}(\xi)(f,g)| \lesssim  \int_{\R} \big{|}F^{\frac{1}{2}}(\xi) - F^{\frac{1}{2}}(\rho) \big{|} |\hat{f}(\xi-\rho) |  | \hat{g}(\rho) | \: d\rho.
            \end{align*}
            Clearly, it is enough to prove that
            \begin{align}\label{Claim 1}
                k(\xi, \rho) : = \big{|}F^{\frac{1}{2}}(\xi) - F^{\frac{1}{2}}(\rho) \big{|} F^{-\frac{1}{2}}(\rho) \langle \rho \rangle \lesssim 1 + |\xi-\rho|F^{-\frac{1}{2}}(\xi-\rho).
            \end{align}
            Indeed, assuming the claim \eqref{Claim 1} and using Plancherel, Minkowski integral inequality, the Cauchy-Schwarz inequality and \eqref{F -1/2 } we obtain the desired estimate
            \begin{align*}
                | [\mathrm{F}^{\frac{1}{2}},f]g|_{L^2} 
                & \leq
                \Big{|} 
                \int_{\R}
                k(\xi,\rho) \: |\hat{f}(\xi-\rho)| \: F^{\frac{1}{2}}(\rho)\langle \rho \rangle^{-1}|\hat{g}(\rho)| \: d\rho
                \Big{|}_{L^2_{\xi}}
                \\ 
                & \lesssim
                \Big{|} 
                \int_{\R}\big{(}1 + |\gamma|F^{-\frac{1}{2}}(\gamma)\big{)}\: |\hat{f}(\gamma)| \: F^{\frac{1}{2}}(\xi-\gamma)\langle \xi-\gamma \rangle^{-1}|\hat{g}(\xi-\gamma)| \: d\gamma
                \Big{|}_{L^2_{\xi}}
                \\
                & 
                \lesssim |f|_{X^{t_0+1}_{\mu}}|\mathrm{F}^{\frac{1}{2}}g|_{H^{-1}}.
            \end{align*}
            Now, to prove the claim \eqref{Claim 1}, we consider three cases. First, in the case $|\rho|\leq 1$ it follows directly that
            \begin{align*}
                k(\xi, \rho) \lesssim 1, 
            \end{align*}
            since $\xi \mapsto F^{\frac{1}{2}}(\xi)$ and $\rho \mapsto F^{-\frac{1}{2}}(\rho) \langle \rho \rangle$ is bounded. Next, consider the case $|\rho|>1$ and $|\xi| \leq |\rho|$. Then we note that since $\xi \mapsto F^{\frac{1}{2}}(\xi)$ is decreasing for $\xi>0$, we have the estimate
            \begin{align*}
                \frac{F(\rho)}{F(\xi)} \leq 1,
            \end{align*}
            and moreover since  $\xi \mapsto \big{(}\frac{\sqrt{\mu}\xi}{\tanh(\sqrt{\mu}\xi)}-1\big{)}$ is increasing for $\xi>0$, we get  that 
            \begin{align}\label{observation 1}
                \frac{|\xi |}{ |\rho |}
                \leq 
                \bigg{(}\frac{|\xi|}{|\rho|}\bigg{)}^{\frac{1}{2}}
                \leq 
                \bigg{(}\frac{F(\rho)}{F(\xi)} \bigg{)}^{\frac{1}{2}} 
                \leq 1.
            \end{align}
            Thus, we obtain the bound 
            \begin{align*}
                k(\xi, \rho) 
                & =
                \bigg{(} 
                1 -\bigg{(}\frac{F(\rho)}{F(\xi)} \bigg{)}^{\frac{1}{2}}
                \bigg{)}
                \bigg{(}\frac{F(\xi)}{F(\rho)} \bigg{)}^{\frac{1}{2}}
                \langle \rho \rangle 
                \\
                & 
                \lesssim (|\rho| - |\xi|)
                \bigg{(}\frac{F(\xi)}{F(\rho)} \bigg{)}^{\frac{1}{2}}.
            \end{align*}
            Finally, to conclude this case, we make the observation that if $|\xi| \sim  |\rho|$, then 
            \begin{equation*}
                \bigg{(}\frac{F(\xi)}{F(\rho)} \bigg{)}^{\frac{1}{2}} \lesssim 1.
            \end{equation*}
            Otherwise, we obtain
            \begin{align*}
                \bigg{(}\frac{F(\xi)}{F(\rho)} \bigg{)}^{\frac{1}{2}}
                \lesssim 1+\mu^{\frac{1}{4}} |\rho|^{\frac{1}{2}} \lesssim 1+\mu^{\frac{1}{4}} |\rho-\xi|^{\frac{1}{2}}.
            \end{align*}
            Gathering these estimates allows us to conclude that
         \begin{align*}
                k(\xi, \rho) 
                \lesssim (|\rho| - |\xi|)
                \bigg{(}\frac{F(\xi)}{F(\rho)} \bigg{)}^{\frac{1}{2}} \lesssim 1 + |\xi-\rho| F^{-\frac{1}{2}}(\xi-\rho).
            \end{align*}
            On the other hand,  the case $|\xi| > |\rho| >1$ follows directly by changing the role of $\xi$ and $\rho$ in \eqref{observation 1}. Indeed, we obtain that
            \begin{align*}
                k(\xi, \rho) 
                & =
                \bigg{(} 
                1 -\bigg{(}\frac{F(\xi)}{F(\rho)} \bigg{)}^{\frac{1}{2}}
                \bigg{)}
                \langle \rho \rangle 
                \\
                & 
                \lesssim (|\xi| - |\rho|),
            \end{align*}
            and the proof of \eqref{Commutator: F_half} is complete.

        \end{proof}

        \begin{prop}\label{estimate F1/2}
            Let $s\geq 0$, and let $f \in H^{s+2}(\mathbb{R})$, then we have the following estimation on the Fourier multiplier $\mathrm{F}^{\frac{1}{2}}$
            \begin{align*}
                |(\mathrm{F}^{\frac{1}{2}} -1)f|_{H^s} \lesssim \mu |f|_{H^{s+2}}.
            \end{align*}
        \end{prop}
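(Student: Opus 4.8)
The plan is to reduce the estimate to a uniform pointwise bound on the Fourier symbol and then conclude by Plancherel's identity. Precisely, I would prove that
\begin{equation}\label{F half symbol bound}
    \big| F^{\frac{1}{2}}(\xi) - 1 \big| \leq c\, \mu \langle \xi \rangle^2 \qquad \text{for all } \xi \in \R \text{ and } (\mu,\ve,\beta) \in \mathcal{A}_{\mathrm{SW}},
\end{equation}
with $c$ independent of the parameters. Granting \eqref{F half symbol bound}, the proposition is immediate: since $\widehat{(\mathrm{F}^{\frac{1}{2}}-1)f}(\xi) = (F^{\frac{1}{2}}(\xi)-1)\hat f(\xi)$, Plancherel's theorem gives
\begin{align*}
    |(\mathrm{F}^{\frac{1}{2}}-1)f|_{H^s}^2
    = \int_{\R} \langle \xi \rangle^{2s} \big| F^{\frac{1}{2}}(\xi) - 1 \big|^2 |\hat f(\xi)|^2 \, d\xi
    \leq c^2 \mu^2 \int_{\R} \langle \xi \rangle^{2s+4} |\hat f(\xi)|^2 \, d\xi
    = c^2 \mu^2 |f|_{H^{s+2}}^2 .
\end{align*}

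To prove \eqref{F half symbol bound} I would rescale the frequency. Writing $y = \sqrt{\mu}\,\xi$ and setting
\begin{equation*}
    G(y) := \sqrt{ \frac{3}{y^2}\Big( \frac{y}{\tanh y} - 1 \Big) },
\end{equation*}
one has $F^{\frac{1}{2}}(\xi) = G(\sqrt{\mu}\,\xi)$. Since $\tanh y < y$ for $y > 0$ and $y/\tanh y \to 1$ as $y \to 0$, the expression under the square root extends to a smooth, even, strictly positive function of $y \in \R$ taking the value $1$ at the origin (its Taylor expansion there being $1 - \tfrac{1}{15}y^2 + O(y^4)$); hence $G \in C^\infty(\R)$ is even with $G(0) = 1$ and $G'(0) = 0$. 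Moreover $G \geq 0$, and $G$ is nonincreasing on $(0,\infty)$, exactly as already used in the proof of \eqref{Commutator: F_half}, so that $0 \leq G(y) \leq G(0) = 1$ for all $y \in \R$.

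Then I would split according to whether $\sqrt{\mu}|\xi| \leq 1$ or $\sqrt{\mu}|\xi| > 1$. In the first regime, Taylor's theorem applied to the smooth even function $G$ around $0$ (where $G(0)=1$, $G'(0)=0$) gives $|G(y) - 1| \leq \tfrac{1}{2}\big(\sup_{|y|\leq 1}|G''(y)|\big)\,y^2$, whence $|F^{\frac{1}{2}}(\xi) - 1| \lesssim \mu \xi^2 \leq \mu\langle\xi\rangle^2$. In the second regime one simply bounds $|F^{\frac{1}{2}}(\xi) - 1| = 1 - G(\sqrt{\mu}\,\xi) \leq 1 \leq \mu\xi^2 \leq \mu\langle\xi\rangle^2$. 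Adding the two cases yields \eqref{F half symbol bound}, and combined with the Plancherel step above this completes the proof.

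The argument carries no genuine difficulty — the one point deserving care is that the rescaling $y = \sqrt\mu\,\xi$ has absorbed all $\mu$-dependence into the variable $y$, so the Taylor constant $\sup_{|y|\leq 1}|G''(y)|$ is a fixed finite number independent of $\mu$; this is precisely what makes the final bound uniform over $(\mu,\ve,\beta) \in \mathcal{A}_{\mathrm{SW}}$.
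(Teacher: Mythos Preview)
Your proof is correct and follows essentially the same approach as the paper: rescale to the variable $y=\sqrt{\mu}\,\xi$, use that $G$ is smooth with $G(0)=1$, $G'(0)=0$, apply Taylor's formula to obtain the pointwise symbol bound, and conclude by Plancherel. The only cosmetic difference is that the paper observes $G''$ is bounded on all of $[0,\infty)$ and applies Taylor--Lagrange globally, whereas you split into the regimes $\sqrt{\mu}|\xi|\le 1$ and $\sqrt{\mu}|\xi|>1$; both lead to the same estimate.
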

        \begin{proof}
            First, remark that it is enough to prove the result only when $s=0$. The function defining the Fourier multiplier $\mathrm{F}^{\frac{1}{2}}$ is a smooth function on $(0,+\infty)$, continuous in $0$ with $F^{\frac{1}{2}}(0) = 1$ and its first derivative is zero. Moreover, its second derivative is bounded in $[0,+\infty)$, so that from Plancherel identity and the Taylor-Lagrange formula, we get
            \begin{align*}
                |(F^{\frac{1}{2}}(\sqrt{\mu}|\xi|)-1)\hat{f}|_2 \leq \mu| |\xi|^2\hat{f}|_{L^2}.
            \end{align*}
            In the end, we have the estimate
            \begin{align*}
                |(\mathrm{F}^{\frac{1}{2}}-1)f|_{L^2} \leq \mu|f|_{H^2}.
            \end{align*}
        \end{proof}

    \subsection{Properties of $\mathscr{T}[h,\beta b]$}\label{PropT} 
    In this section, we study an elliptic operator associated with $\mathcal{T}[h,\beta b]$ given by \eqref{opT}. The main result is given in the following proposition where the main reference is \cite{Samer11}.
    
        \begin{prop}\label{Inverse of T}
            Let $(\mu, \ve, \beta)\in \mathcal{A}_{\mathrm{SW}}$, $s\geq 0$, $\zeta \in H^{\max\{1,s\}}(\R)$, $b \in H^{s+2}(\R)$ and let $h = 1 +\ve\zeta - \beta b$ satisfy the non-cavitation condition \eqref{NonCav}. Define the application
            \begin{equation}\label{T[h]}
			\mathscr{T}[h,\beta b] 
			\:
			:
			\:
			\begin{cases}
				H^1(\R) & \rightarrow L^2(\R) \\
				v &  \mapsto hv + \mu h \mathcal{T}[h,\beta b] v
			\end{cases} 
		\end{equation}
            Then we have the following properties:
            \begin{itemize}
                \item [1.] The operator \eqref{T[h]}  is well-defined and  for $v\in H^1(\R)$  there holds,
                    \begin{equation}
                        | \mathscr{T}[h,\beta   b] v |_{L^2} \lesssim  | v |_{X^{\frac{1}{2}}_{\mu}}.
                    \end{equation}
                \item [2.] The operator \eqref{T[h]} is one-to-one and onto.\\

                \item [3.] For $s\geq 0$ and $f \in H^s(\R)$ there holds,
                    \begin{equation}\label{Inverse est T}
                        | \mathscr{T}^{-1}[h,\beta  b]f|_{X^s_{\mu}}\lesssim  |f|_{H^{s}}.
                    \end{equation}

                \item [4.] For $s \geq 1$ and $f \in H^{s-1}(\R)$ there  holds, 
                    \begin{equation}\label{Inverse est F T}
                         \sqrt{\mu} |\mathrm{F}^{\frac{1}{2}}\mathscr{T}^{-1}[h,\beta   b]f|_{H^s} \lesssim  |f|_{H^{s-1}} .
                    \end{equation}
            \end{itemize}        
        \end{prop}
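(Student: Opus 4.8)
The plan is to treat $\mathscr{T}[h,\beta b] = h + \mu h\mathcal{T}[h,\beta b]$ as a coercive, symmetric perturbation of multiplication by $h$, and to extract from $\mathcal{T}$ the underlying quadratic form. First I would expand $h\mathcal{T}[h,\beta b]v$ using \eqref{opT} and check, by integration by parts against a test function $w$, that
\begin{align*}
    \big{(}\mathscr{T}[h,\beta b]v, w\big{)}_{L^2}
    = \big{(}hv,w\big{)}_{L^2}
    + \frac{\mu}{3}\big{(}h^3 \mathrm{F}^{\frac{1}{2}}\partial_x v, \mathrm{F}^{\frac{1}{2}}\partial_x w\big{)}_{L^2}
    + \mu\, \mathcal{R}_b(v,w),
\end{align*}
where $\mathcal{R}_b$ collects all the bathymetry terms (those carrying a factor $\beta\partial_x b$) and is, after the integrations by parts built into \eqref{opT}, a \emph{symmetric} bilinear form controlled by $|\beta\partial_x b|_{H^{s+1}}\big{(}|v|_{X^0_\mu}|w|_{X^0_\mu} + \sqrt{\mu}|\mathrm{F}^{\frac{1}{2}}\partial_x v|_{L^2}|w|_{L^2} + \cdots\big{)}$; the key point is that each bathymetry term contains at most one $\mathrm{F}^{\frac{1}{2}}\partial_x$ acting on $v$ and the remaining derivatives fall on $\beta b$, so no term behaves worse than $\mu^{1/2}$ in the $X^0_\mu$ norm. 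Using the non-cavitation bound $h\geq h_0$ and \eqref{Equivalence of norms} to pass between $|\mathrm{F}^{\frac{1}{2}}\partial_x\cdot|_{L^2}$ and the $\sqrt{\mu}|\mathrm{D}^{1/2}\cdot|$ part of $|\cdot|_{X^0_\mu}$, this gives item 1 and, taking $w=v$ and absorbing the small bathymetry contribution (here $\beta$ or $|\beta b|_{H^{s+2}}$ small is what lets the $h^3\geq h_0^3$ term dominate), the coercivity estimate
\begin{align*}
    \big{(}\mathscr{T}[h,\beta b]v, v\big{)}_{L^2} \gtrsim |v|_{X^0_\mu}^2 .
\end{align*}

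Items 2 and 3 then follow from Lax--Milgram applied on the Hilbert space $X^0_\mu(\R)$ (equivalently on $H^1$ with the $\mu$-weighted inner product): the form is bounded and coercive, so for every $f\in L^2\subset (X^0_\mu)'$ there is a unique $v=\mathscr{T}^{-1}[h,\beta b]f$ with $|v|_{X^0_\mu}\lesssim |f|_{L^2}$, which is the $s=0$ case of \eqref{Inverse est T}. For $s>0$ I would run the standard elliptic-regularity bootstrap: apply $\mathrm{J}^s$ to the equation $\mathscr{T}[h,\beta b]v=f$, commute it through (the commutators $[\mathrm{J}^s,h\,\cdot\,]$, $[\mathrm{J}^s,\mathrm{F}^{\frac{1}{2}}\partial_x(h^3\cdot)]$, etc.\ are exactly what Proposition \ref{Commutator estimates} and the commutator estimates \eqref{Commutator: J_sF_half}--\eqref{Commutator: J Fdx} are designed for), test against $\mathrm{J}^s v$, and use coercivity again plus induction on $s$ to close; this is where $b\in H^{s+2}$ enters, since the worst bathymetry commutator loses two derivatives on $b$.

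Finally, item 4 is a corollary of item 3 combined with the equation itself: from $\mathscr{T}[h,\beta b]v = f$ one solves $\mu\, h\mathcal{T}[h,\beta b]v = f - hv$, and the leading term of $h\mathcal{T}$ is $-\tfrac13\partial_x\mathrm{F}^{\frac{1}{2}}(h^3\mathrm{F}^{\frac{1}{2}}\partial_x v)$, so $\sqrt\mu\,\mathrm{F}^{\frac{1}{2}}\partial_x v$ is, modulo lower-order (bathymetry and commutator) terms, $\mathrm{F}^{\frac{1}{2}}$ applied to something controlled by $\mu^{-1/2}(|f|_{H^{s-1}} + |hv|_{H^{s-1}})$; using \eqref{Kill half derivative} to absorb one half-derivative into $\mu^{1/4}$, \eqref{Inverse est T} for $|v|_{X^{s-1}_\mu}$, and \eqref{Equivalence of norms} to rewrite $\sqrt\mu\,|\mathrm{F}^{\frac{1}{2}}v|_{H^s}$ yields \eqref{Inverse est F T}. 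The main obstacle I anticipate is bookkeeping the bathymetry terms in $\mathcal{T}$: one must check that every one of them, after the integrations by parts already present in \eqref{opT}, produces a genuinely symmetric contribution that is both coercivity-subordinate (so the $h^3$ term still dominates for $\beta$ in the shallow-water regime) and regularity-subordinate (losing at most two derivatives on $b$ and a fixed number on $v$), since a single sign error or a misplaced derivative there would break either Lax--Milgram or the bootstrap. I would isolate this as a preliminary lemma on the quadratic form before invoking the abstract machinery.
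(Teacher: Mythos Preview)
Your overall strategy---write the symmetric bilinear form, apply Lax--Milgram for existence, then bootstrap regularity via commutators---matches the paper's. But there is a genuine gap in the coercivity step.

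You write that ``$\beta$ or $|\beta b|_{H^{s+2}}$ small is what lets the $h^3\geq h_0^3$ term dominate.'' The hypotheses give no such smallness: $\mathcal{A}_{\mathrm{SW}}$ allows $\beta\in[0,1]$, and $b\in H^{s+2}$ is of arbitrary size. If you treat the bathymetry cross term $-\mu\int h^2(\beta\partial_x b)\,v\,\mathrm{F}^{1/2}\partial_x v$ as a perturbation to be absorbed by Young's inequality into $\tfrac{\mu}{3}\int h^3(\mathrm{F}^{1/2}\partial_x v)^2$ and $\int hv^2$, the residual $C\mu|\beta\partial_x b|_{L^\infty}^2|v|_{L^2}^2$ need not be small compared to $h_0|v|_{L^2}^2$. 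The point you miss is that the final term $(\beta\partial_x b)^2 v$ in $\mathcal{T}$ is not a nuisance but exactly the counterterm needed: the paper completes the square,
\begin{align*}
    a(v,v) = \int h v^2 + \mu\int h\Big(\tfrac{h}{\sqrt3}\mathrm{F}^{\frac12}\partial_x v - \tfrac{\sqrt3}{2}(\beta\partial_x b)v\Big)^2 + \tfrac{\mu}{4}\int h(\beta\partial_x b)^2 v^2 \geq h_0|v|_{L^2}^2,
\end{align*}
and then splits this quantity as $(1-\nu)I+\nu I$ with $\nu$ small to extract a positive multiple of $\mu|\mathrm{F}^{1/2}\partial_x v|_{L^2}^2$ from the squared term. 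No smallness of $\beta$ enters anywhere.

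For item~4 the paper also proceeds differently from your sketch: rather than isolating $\sqrt{\mu}\,\mathrm{F}^{1/2}\partial_x v$ from the equation, it applies $\mathrm{J}^s\mathrm{F}^{1/2}$ to $\mathscr{T}v=f$, tests against $\mathrm{J}^s\mathrm{F}^{1/2}v$, and reuses coercivity together with \eqref{Kill a derivative} on $(\mathrm{F}^1\mathrm{J}^s f,\mathrm{J}^s v)$, followed by induction on $s$. This avoids having to invert $\partial_x\mathrm{F}^{1/2}(h^3\cdot)$ and deals directly with $\mathrm{F}^{1/2}v$ rather than $\mathrm{F}^{1/2}\partial_x v$; the two differ at low frequency, where your outline is vague.
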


        \begin{proof} We give the proof in four steps. \\

            \noindent
            \underline{Step 1:} The application \eqref{T[h]} is well-defined. Indeed, by assumption and Sobolev embedding $H^{\frac{1}{2}^+}(\R) \hookrightarrow L^{\infty}(\R)$ we have that $h \in L^{\infty}(\R)$. Therefore, by \eqref{Equivalence of norms} we get that
            \begin{align*}
                |\mathscr{T}[h,\beta  b] v |_{L^2} 
                & \lesssim
                |hv|_{L^2} + \mu |\partial_x \mathrm{F}^{\frac{1}{2}}(h^3\mathrm{F}^{\frac{1}{2}}\partial_x v)|_{L^2} 
                +
                \mu |\partial_x \mathrm{F}^{\frac{1}{2}} (h^2 (\beta \partial_x b)v)|_{L^2}
                \\ 
                & 
                \hspace{0.5cm}
                +
                \mu |h^2(\beta \partial_x b) \mathrm{F}^{\frac{1}{2}} \partial_x v|_{L^2}
                +
                \mu|h(\beta \partial_x b)^2 v|_{L^2}
                \\
                & 
                \lesssim 
                |h|_{L^{\infty}}|v|_{L^2} 
                +
                \mu^{\frac{3}{4}}|\mathrm{D}^{\frac{1}{2}}(h^3 \mathrm{F}^{\frac{1}{2}} \partial_x v)|_{L^2}
                +
                \mu^{\frac{3}{4}} | \mathrm{D}^{\frac{1}{2}} (h^2 (\beta \partial_x b)v)|_{L^2}
                \\ 
                & 
                \hspace{0.5cm}
                +
                \mu |h^2(\beta \partial_x b) \mathrm{F}^{\frac{1}{2}} \partial_x v|_{L^2}
                +
                \mu|h|_{L^{\infty}}|(\beta \partial_x b)^2 v|_{L^2}
                \\
                & = :
                A_1 + A_2 + A_3 + A_4 + A_5.
            \end{align*}
            To conclude, we note that $(h-1)\in H^1(\R)$ and together with Hölder's inequality, the Sobolev embedding, and \eqref{Equivalence of norms} we estimate $A_1 + A_4 + A_5$:
            \begin{align*}
                A_1 + A_4 + A_5 \leq c(|h-1|_{H^1}, |h^2-1|_{H^1}, \beta |\partial_x b|_{L^{\infty}}) |v|_{X^{\frac{1}{2}}_{\mu}}.
            \end{align*}
            The remaining terms are treated similarly, after an application of \eqref{D_half - fracLeib - Sob}, and yield the desired estimate
            \begin{align*}
                |\mathscr{T}[h,\beta  b] v |_{L^2} \lesssim |v |_{X^{\frac{1}{2}}_{\mu}}.
            \end{align*}

            \noindent
            \underline{Step 2.} The application \eqref{T[h]} is one-to-one and onto. Equivalently, we prove that there exist a unique solution $v \in H^1(\R)$ to the equation
	\begin{equation}\label{Tu=f, strong form}
		\mathscr{T}[h,\beta  b] v = f,
	\end{equation}
	for $f\in L^2(\R)$. To construct a solution, we first consider the variational formulation of \eqref{Tu=f, strong form} that is given by
	\begin{equation}\label{variational formulation}
		a(v,\varphi ) = L(\varphi),
	\end{equation}
	for any $\varphi \in C^{\infty}_{c}(\R)$ and with 
        \begin{equation*}
        \begin{cases}
            a(v,\varphi) := \big{(} v,h \varphi \big{)}_{L^2} 
            +
            \big{(}  v, \mu h\mathcal{T}[h,\beta  b]\varphi \big{)}_{L^2}
            \\
            L(\varphi ) :=\big{(} f,\varphi \big{)}_{L^2}.
        \end{cases}
        \end{equation*}
        Then, through a direct application of the Lax-Milgram lemma, we prove there exists a unique variational solution $v \in  \overline{C_c^{\infty}(\R)}^{|\cdot |_{H^{ \frac{1}{2} }}}= H^{\frac{1}{2}}(\R)$. Indeed, we observe that the application $(u,v) \mapsto a(u,v)$ is continuous on $H^{\frac{1}{2}}(\R) \times H^{\frac{1}{2}}(\R)$:
        \begin{align*}
            |a(v,\varphi)| \leq c(|h-1|_{H^1},\beta|\partial_x b|_{L^{\infty}}) |v|_{X^{0}_{\mu}}|\varphi|_{X^{0}_\mu},
        \end{align*}
        by integration by parts, Hölder's inequality and \eqref{Equivalence of norms}. Moreover, the coercivity estimate is deduced by first making the observation:
        \begin{align*}
            a(v,v) 
            & =
            \big{(} v, h v \big{)}_{L^2}
            +
            \mu 
            \Big{(}h
            \big{(}\frac{h}{\sqrt{3}}\mathrm{F}^{\frac{1}{2}}\partial_xv - \frac{\sqrt{3}}{2}(\beta \partial_x b)v\big{)},\frac{h}{\sqrt{3}}\mathrm{F}^{ \frac{1}{2} }\partial_x v - \frac{\sqrt{3}}{2} (\beta \partial_x b) v \Big{)}_{L^2}
            \\
            & 
            \hspace{0.5cm}
            +
            \frac{\mu}{4}
            \big{(} h(\beta \partial_x b) v,(\beta \partial_x b)v \big{)}_{L^2}
            \\
            & \geq 
            h_0 |v|_{L^2}^2
            +
            \mu h_0
            \Big{|} 
            \frac{h}{\sqrt{3}}\mathrm{F}^{\frac{1}{2}}\partial_xv - \frac{\sqrt{3}}{2}(\beta \partial_x b)v
            \Big{|}_{L^2}^2
            +
            \frac{\mu\beta^2}{4}|\sqrt{h}(\partial_xb)v|_{L^2}^2
            \\
            & =: 
            I.
        \end{align*}
        Now, let $\nu>0$ be chosen later and make the decomposition  $I = (1-\nu) I + \nu I$. Then the first term can be bounded below by
        \begin{align*}
            (1-\nu) I 
            \geq 
            (1-\nu ) h_0 |v|_{L^2}^2
            +
            \frac{(1-\nu)\mu\beta^2}{4}|\sqrt{h}(\partial_xb)v|_{L^2}^2.
        \end{align*}
        On the other hand, the remaining part is estimated by Cauchy-Schwarz and  Young's inequality:
        \begin{align*}
            \nu I 
            & \geq 
            \frac{\nu \mu h_0^3}{3} |\mathrm{F}^{\frac{1}{2}} \partial_x v |_{L^2}^2 - \mu \nu h_0 \beta |\sqrt{h}|_{L^{\infty}} | \sqrt{h} (\partial_x b) v |_{L^2} |\mathrm{F}^{\frac{1}{2}}\partial_x v|_{L^2}
            \\
            & \geq  
            \frac{\nu \mu h_0^3}{3} |\mathrm{F}^{\frac{1}{2}} \partial_x v |_{L^2}^2 - \mu \nu h_0 \Big(\dfrac{h_0^2}{6}|\mathrm{F}^{\frac{1}{2}}\partial_x v|_{L^2}^2 + \dfrac{3}{2h_0^2}|\sqrt{h}|_{L^{\infty}}^2\beta^2 | \sqrt{h} (\partial_x b) v |_{L^2}\Big).
        \end{align*}
        So that
        \begin{align*}
            I \geq (1-\nu)h_0|v|_{L^2}^2 + \dfrac{\nu \mu h_0^3}{6} |\mathrm{F}^{\frac{1}{2}}\partial_x v|_{L^2}^2 + \mu\beta^2 \Big(\dfrac{1}{4} - \nu\Big(\dfrac{1}{4} + \dfrac{3| \sqrt{h}|_{L^{\infty}}^2}{2h_0} \Big)\Big) | \sqrt{h} (\partial_x b) v |_{L^2}.
        \end{align*}
        Thus, to conclude, simply choose $\nu$ small enough, from which we deduce the desired estimate
        \begin{align}\label{Coereicivity}
            a(v,v) 
           \geq
           c
           |v|_{X^{0}_{\mu}}^2.
        \end{align}
        Lastly, the application $\varphi\mapsto L(\varphi)$ is continuous on $\varphi \in H^{\frac{1}{2}}(\R) $ by Cauchy-Schwarz. Consequently, we have a unique variational solution $v\in H^{\frac{1}{2}}(\R)$ satisfying \eqref{variational formulation} for any $\varphi \in H^{\frac{1}{2}}(\R)$. Let us show that this solution is in $H^1(\R)$, so that it also satisfies \eqref{Tu=f, strong form}.\\

        Let $0<\delta\leq 1 $ and take $\chi_{\delta}(D)$ as in Definition  \ref{regularisation} and define a sequence of smooth functions given by $v_{\delta} := \chi_{\delta}v \in  \cap_{s >0}  H^{s}(\R)$. Then using $\mathrm{J}^1 (\chi_{\delta})^2 v\in H^{\frac{1}{2}}(\R)$ as a test function, we get
        \begin{align*}
            a(\mathrm{J}^{\frac{1}{2}}v_{\delta},\mathrm{J}^{\frac{1}{2}}v_{\delta}) 
            &=
            a\Big(v,\mathrm{J}^1(\chi_{\delta})^2 v\Big) 
            -
            \big([\mathrm{J}^{\frac{1}{2}}\chi_{\delta},h]v,\mathrm{J}^{\frac{1}{2}}v_{\delta} \big)_{L^2} 
            -
            \frac{\mu}{3} \big([\mathrm{J}^{\frac{1}{2}}\chi_{\delta},h^3]\mathrm{F}^{\frac{1}{2}}\partial_x v, \mathrm{J}^{\frac{1}{2}}\mathrm{F}^{\frac{1}{2}}\partial_x v_{\delta} \big)_{L^2} 
            \\
            & \hspace{0.5cm} 
            +
            \frac{\mu}{2}\big([\mathrm{J}^{\frac{1}{2}}\chi_{\delta},h^2(\beta\partial_x b)]\mathrm{F}^{\frac{1}{2}}\partial_x v, \mathrm{J}^{\frac{1}{2}}v_{\delta} \big)_{L^2} 
            +
            \frac{\mu}{2}\big([\mathrm{J}^{\frac{1}{2}}\chi_{\delta},h^2(\beta\partial_x b)] v, \mathrm{J}^{\frac{1}{2}}\mathrm{F}^{\frac{1}{2}}\partial_x v_{\delta} \big)_{L^2}
            \\
            & \hspace{0.5cm} 
            -
            \mu\big([\mathrm{J}^{\frac{1}{2}}\chi_{\delta},h(\beta \partial_x b)^2]v, \mathrm{J}^{\frac{1}{2}}v_{\delta} \big)_{L^2}.
        \end{align*}
        Then using \eqref{variational formulation} and \eqref{Coereicivity}, we get
        \begin{align*}
            c|v_{\delta} |_{X^{\frac{1}{2}}_{\mu}}^2 
            &\leq 
            a(\mathrm{J}^{\frac{1}{2}}v_{\delta},\mathrm{J}^{\frac{1}{2}}v_{\delta}) 
            \\
            &=
            |\big(f,\mathrm{J}^1 (\chi_{\delta})^2 v\big)_{L^2} 
            -
            \big([\mathrm{J}^{\frac{1}{2}}\chi_{\delta},h]v,\mathrm{J}^{\frac{1}{2}}v_{\delta} \big)_{L^2} 
            -
            \frac{\mu}{3} \big([\mathrm{J}^{\frac{1}{2}}\chi_{\delta},h^3]\mathrm{F}^{\frac{1}{2}}\partial_x v, \mathrm{J}^{\frac{1}{2}}\mathrm{F}^{\frac{1}{2}}\partial_x v_{\delta} \big)_{L^2} 
            \\
            & \hspace{0.5cm} 
            +
            \frac{\mu}{2}\big([\mathrm{J}^{\frac{1}{2}}\chi_{\delta},h^2(\beta\partial_x b)]\mathrm{F}^{\frac{1}{2}}\partial_x v, \mathrm{J}^{\frac{1}{2}}v_{\delta} \big)_{L^2} 
            +
            \frac{\mu}{2}\big([\mathrm{J}^{\frac{1}{2}}\chi_{\delta},h^2(\beta\partial_x b)] v, \mathrm{J}^{\frac{1}{2}}\mathrm{F}^{\frac{1}{2}}\partial_x v_{\delta} \big)_{L^2}
            \\
            & \hspace{0.5cm} 
            - \mu\big([\mathrm{J}^{\frac{1}{2}}\chi_{\delta},h(\beta \partial_x b)^2]v, \mathrm{J}^{\frac{1}{2}}v_{\delta} \big)_{L^2}|.
        \end{align*}
        Now remark that $\mathrm{J}^{\frac{1}{2}}\chi_{\delta}(D)$ is a Fourier multiplier of order $\frac{1}{2}$ in the sense of Definition \ref{definition order of a Fourier multiplier}, and that $\mathcal{N}^{\frac{1}{2}}(\mathrm{J}^{\frac{1}{2}}\chi_{\delta}(D)) \lesssim 1$ uniformly in $\delta$. Hence, from Cauchy-Schwarz inequality, \eqref{Equivalence of norms} and the commutator estimates of Proposition \ref{Commutator estimates}, we get
        \begin{align*}
            c |v_{\delta} |_{X^{\frac{1}{2}}_{\mu}}^2 \leq c(|f |_{L^2} + |v |_{L^2}) |v_{\delta}|_{H^1}.
        \end{align*}
        We, therefore, deduce the estimate
        \begin{align}\label{Base case for step 4}
            \sqrt{\mu}c |v_{\delta}|_{H^1} 
             \lesssim       |f|_{L^2} + |v|_{L^2}.
        \end{align}
        The family $\{v_{\delta}\}_{0<\delta\leq 1}$ is uniformly bounded in $H^1(\R)$. Hence, since $H^1(\R)$ is a reflexive Banach space, there exists $V \in H^1(\R)$ and a subsequence $\{v_{\delta_n}\}_{0<\delta_n\leq 1}$ with $\delta_n \to 0$ such that $v_{\delta_n} \rightharpoonup V$. By uniqueness of the limit in $L^2(\R)$, we deduce that $v = V \in H^1(\R)$.\\
        To conclude, we may now use \eqref{variational formulation} and integration by parts to find that
	\begin{equation*}
		\big{(} \mathscr{T}[h,\beta  b]v,\varphi \big{)}_{L^2} = \big{(} f, \varphi \big{)}_{L^2},
	\end{equation*}
	for any $\varphi \in C^{\infty}_{c}(\R)$. Hence, we conclude that the variational solution also provides a unique solution of \eqref{Tu=f, strong form}.\\
 
        \noindent
        \underline{Step 3.} The estimate \eqref{Inverse est T} holds. To prove the claim, we first consider $v$, the solution of \eqref{Tu=f, strong form}. From the coercivity estimate \eqref{Coereicivity} and Cauchy-Schwarz inequality, we have
        \begin{align*}
            |v|_{X^0_{\mu}}^2 \lesssim a(v,v) = L(v) \leq |f|_{L^2}|v|_{L^2}\leq |f|_{L^2}|v|_{X^0_{\mu}},
        \end{align*}
        so that
        \begin{align}\label{Base case}
            |v|_{X^0_{\mu}} \lesssim c|f|_{L^2}.
        \end{align}
        Next, we apply $\mathrm{J}^s$ to \eqref{Tu=f, strong form} and observe that $\mathrm{J}^s v$ is a distributional solution of the equation
        \begin{align}\label{differentiated equation}
            \mathscr{T}[h, \beta b]v_s &= \mathrm{J}^s f - [\mathrm{J}^s,h]v + \frac{\mu}{3} \partial_x \mathrm{F}^{\frac{1}{2}}([\mathrm{J}^s,h^3]\mathrm{F}^{\frac{1}{2}}\partial_x v) - \frac{\mu}{2}\partial_x \mathrm{F}^{\frac{1}{2}}([\mathrm{J}^s,h^2(\beta\partial_x b)]v) \\
            & \hspace{0.5cm} + \frac{\mu}{2}[\mathrm{J}^s,h^2(\beta\partial_x b)]\mathrm{F}^{\frac{1}{2}}\partial_x v - [\mathrm{J}^s,h(\beta\partial_x b)^2]v
        \end{align}
        Moreover, from the coercivity estimate \eqref{Coereicivity}, the variational solution of \eqref{differentiated equation}, $v_s$, satisfies
        \begin{align*}
            c|v_s |_{X^0_{\mu}}^2 \leq a(v_s,v_s) &= \Big(\mathrm{J}^s f - [\mathrm{J}^s,h]v + \frac{\mu}{3} \partial_x \mathrm{F}^{\frac{1}{2}}([\mathrm{J}^s,h^3]\mathrm{F}^{\frac{1}{2}}\partial_x v) - \frac{\mu}{2}\partial_x \mathrm{F}^{\frac{1}{2}}([\mathrm{J}^s,h^2(\beta\partial_x b)]v) \\
            & \hspace{0.5cm} + \frac{\mu}{2}[\mathrm{J}^s,h^2(\beta\partial_x b)]\mathrm{F}^{\frac{1}{2}}\partial_x v - [\mathrm{J}^s,h(\beta\partial_x b)^2]v, v_s \Big)_{L^2}\\
            &= \big(\mathrm{J}^s f, v_s)_{L^2} - \big([\mathrm{J}^s,h]v,v_s\big)_{L^2} - \frac{\mu}{3} \big( [\mathrm{J}^s,h^3]\mathrm{F}^{\frac{1}{2}}\partial_x v, \mathrm{F}^{\frac{1}{2}}\partial_x v_s\big)_{L^2} \\
            &\hspace{0.5cm} + \frac{\mu}{2}\big( [\mathrm{J}^s,h^2(\beta\partial_x b)]v, \mathrm{F}^{\frac{1}{2}} \partial_x v_s\big)_{L^2} + \frac{\mu}{2} \big([\mathrm{J}^s,h^2(\beta\partial_x b)]\mathrm{F}^{\frac{1}{2}}\partial_x v, v_s\big)_{L^2} \\
            &\hspace{0.5cm} - \big([\mathrm{J}^s,h(\beta\partial_x b)^2]v, v_s \big)_{L^2}.
        \end{align*}
        Then using Cauchy-Schwarz inequality and the commutator estimates of Proposition \ref{Commutator estimates}, we get
        \begin{align*}
            |v_s|^2_{X^0_{\mu}} 
            &  \lesssim c
            (|f|_{H^s} + |v|_{X^{s-1}_{\mu}} )|v_s|_{X^s_{\mu}}.
        \end{align*}
        To conclude, we first consider $s\in \mathbb{N}$ and simply argue by induction using \eqref{Base case} as a base case noting that the distributional and variational solutions must coincide, i.e. $v_s = \mathrm{J}^s v$. Then use the interpolation inequality \eqref{Interpolation} to obtain \eqref{Inverse est T} for any $s$ real number $\geq 0$.\\

        \noindent
        \underline{Step 4.} The estimate \eqref{Inverse est F T} holds. Arguing as above, we apply $\mathrm{F}^{\frac{1}{2}}\mathrm{J}^s$ to \eqref{Tu=f, strong form} and get
        \begin{align*}
            |\mathrm{F}^{\frac{1}{2}} v |_{X^s_{\mu}} &\lesssim a(\mathrm{F}^{\frac{1}{2}} \mathrm{J}^s v, \mathrm{F}^{\frac{1}{2}} \mathrm{J}^s v) \\
            &= \big(\mathrm{F}^1 \mathrm{J}^s f, \mathrm{J}^s v)_{L^2} - \big([\mathrm{J}^s\mathrm{F}^{\frac{1}{2}},h]v,\mathrm{F}^{\frac{1}{2}} \mathrm{J}^s v\big)_{L^2} - \frac{\mu}{3} \big( [\mathrm{J}^s\mathrm{F}^{\frac{1}{2}},h^3]\mathrm{F}^{\frac{1}{2}}\partial_x v, \mathrm{F}^1\partial_x \mathrm{J}^s v\big)_{L^2} \\
            &\hspace{0.5cm} + \frac{\mu}{2}\big( [\mathrm{J}^s\mathrm{F}^{\frac{1}{2}},h^2(\beta\partial_x b)]v, \mathrm{F}^1 \partial_x \mathrm{J}^s v\big)_{L^2} + \frac{\mu}{2} \big([\mathrm{J}^s\mathrm{F}^{\frac{1}{2}},h^2(\beta\partial_x b)]F^{\frac{1}{2}}\partial_x v, \mathrm{F}^{\frac{1}{2}}\mathrm{J}^sv\big)_{L^2} \\
            &\hspace{0.5cm} - \big([\mathrm{J}^s\mathrm{F}^{\frac{1}{2}},h(\beta\partial_x b)^2]v, \mathrm{F}^{\frac{1}{2}} \mathrm{J}^s v \big)_{L^2}.
        \end{align*}
        Now using Cauchy-Schwarz inequality, the commutator estimates \eqref{Commutator: J_sF_half}  and \eqref{Kill a derivative}, we get
        \begin{align*}
            |\mathrm{F}^{\frac{1}{2}} v |_{X^s_{\mu}}^2 &\lesssim (\frac{1}{\sqrt{\mu}}|f|_{H^{s-1}} + |v|_{H^{s-1}})|v|_{H^s}.
        \end{align*}
        %
        %
        %
        %
        %
        %
        %
        %
        %
        %
        %
        %
        Moreover, for all $s\in \R$ there holds,
        \begin{align*}
            |v|_{H^s}
            & \lesssim 
             |\mathbbm{1}_{\{\sqrt{\mu}|D|\leq 1\}} v|_{H^s} 
            +
            |\mathbbm{1}_{\{\sqrt{\mu}|D|> 1\}} v|_{H^s}
            \\
            & 
            \lesssim 
             |\mathbbm{1}_{\{\sqrt{\mu}|D|\leq 1\}} \mathrm{F}^{\frac{1}{2}}v|_{H^s}
            +
            \sqrt{\mu}|\mathbbm{1}_{\{\sqrt{\mu}|D|> 1\}} \mathrm{F}^1\partial_x  v|_{H^s}
            \\
            & 
            \lesssim
            |\mathrm{F}^{\frac{1}{2}}v|_{H^s} + \sqrt{\mu}|\mathrm{F}^1\partial_x v|_{H^s} \\
            &
            \lesssim  |\mathrm{F}^{\frac{1}{2}} v |_{X^s_{\mu}}.
        \end{align*}
        Thus, by gathering these estimates we get
        \begin{align*}
            \sqrt{\mu}|\mathrm{F}^{\frac{1}{2}}v|_{X^s_{\mu}} 
            \lesssim c\big( |f|_{H^{s-1}} + \sqrt{\mu}|\mathrm{F}^{\frac{1}{2}}v|_{X^{s-1}_{\mu}}\big),
        \end{align*}
        and allows us to argue by induction for $s\in \mathbb{N}\backslash\{0\}$, where the base case reads
        \begin{align*}
            |\mathrm{F}^{\frac{1}{2}} v |_{X^0_{\mu}} \lesssim |v |_{X^0_{\mu}} \lesssim |f |_{L^2}
        \end{align*}
        %
        %
        %
        Then use \eqref{Interpolation} to conclude the proof. In the end, we have the estimate
        \begin{align*}
            \sqrt{\mu} |\mathrm{F}^{\frac{1}{2}} v |_{H^s} \leq \sqrt{\mu}|\mathrm{F}^{\frac{1}{2}} v |_{X^s_{\mu}} \leq c | f |_{H^{s-1}}.
        \end{align*}
        \end{proof}

    \section{Consistency between \eqref{W-G-N} and \eqref{WW}} \label{Consistency}
 
    To derive system \eqref{W-G-N}, we start from the full dispersion Green-Naghdi model derived in \cite{DucheneMMWW21} for which we know the order of precision with respect to the water waves equations \eqref{WW}.

    \begin{prop}[Theorem $10.5$ in \cite{DucheneMMWW21}]\label{Consistency model Vincent} 
        There exists $n \in \mathbb{N}$ and $T >0$ such that for all $s\geq0$ and $( \mu,\varepsilon, \beta)\in\mathcal{A}_{\mathrm{SW}}$, with $b\in H^{s+n}(\R)$ and for every solution $(\zeta,\psi) \in C([0,\frac{T}{ \varepsilon }];H^{s+n}(\mathbb{R})\times \dot{H}^{s+n}(\mathbb{R}))$ to the water waves equations \eqref{WW} one has
        \begin{equation}\label{FDGN}
		       \begin{cases}
			    \partial_t \zeta + \partial_x(h\overline{V}) = 0
			    \\ 
			    \partial_t (\overline{V} + \mu \mathcal{T}[h,\beta \partial_x b] \overline{V}  ) + \partial_x \zeta + \ve \overline{V} \partial_x \overline{V} + \mu\ve  \partial_x\mathcal{R}[h,\beta \partial_x b,\overline{V}] = \mu^2(\varepsilon + \beta)R,
		  \end{cases}
	    \end{equation}
         where $\overline{V}$ is defined through \eqref{EllipticProblem} and \eqref{VbarDef}, 
        and
         \begin{align*}
            \mathcal{T}[h,\beta b] \overline{V} 
            & =
            -
            \frac{1}{3h}\partial_x \mathrm{F}^{\frac{1}{2}}(h^3 \mathrm{F}^{\frac{1}{2}}\partial_x \overline{V}) 
            +
            \frac{1}{2h}
            \Big{(}
                \partial_x \mathrm{F}^{\frac{1}{2}}(h^2 (\beta \partial_x b) \overline{V})
                -
                h^2(\beta \partial_x b)  \mathrm{F}^{\frac{1}{2}} \partial_x \overline{V}
            \Big{)}
            \\
            & \hspace{10cm}
            +
            (\beta \partial_xb)^2 \overline{V},
            \\
            \mathcal{R}[h,\beta b,\overline{V}]
            & = -  \frac{\overline{V}}{3h}\partial_x \mathrm{F}^{\frac{1}{2}}\big( h^3 \mathrm{F}^{\frac{1}{2}}\partial_x \overline{V} \big)
            -
            \frac{1}{2} h^2 \big(\mathrm{F}^{\frac{1}{2}}\partial_x \overline{V} \big)^2 \\
            & \hspace{2cm} 
            +
            \frac{1}{2} \Big(\frac{\overline{V}}{h}\partial_x \mathrm{F}^{\frac{1}{2}}\big(h^2 (\beta \partial_x b) \overline{V} \big) + h (\beta \partial_x b) \overline{V} \mathrm{F}^{\frac{1}{2}}\partial_x \overline{V} + (\beta \partial_x b)^2 \overline{V}^2 \Big),
         \end{align*}
         and where $|R|_{H^s} \leq C(\frac{1}{h_0},
         \mu_{\max},|\zeta|_{H^{s+n)}},|\partial_x \psi|_{H^{s+n}}, |b|_{H^{s+n}})$. \\

         Furthermore, we say that the water waves equations are consistent with the system  \eqref{FDGN} at the order of precision $O(\mu^2(\varepsilon+\beta))$ in the shallow water regime.
    \end{prop}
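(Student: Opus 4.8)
The statement is precisely Theorem 10.5 of \cite{DucheneMMWW21}, so in the present paper it suffices to invoke that reference; for completeness I outline the strategy one would follow to prove it directly. One starts from the Zakharov--Craig--Sulem formulation \eqref{WW} and aims to rewrite it, modulo a remainder of size $\mu^2(\ve+\beta)$, in the closed form \eqref{FDGN} involving only $\zeta$ and the vertically averaged horizontal velocity $\overline V$ of \eqref{VbarDef}. The two workhorses are: (i) the exact identity $\tfrac1\mu\mathcal{G}^\mu[\ve\zeta]\psi = -\partial_x(h\overline V)$, obtained by integrating the Laplace equation in \eqref{EllipticProblem} over the vertical fibre and using the kinematic conditions at $z=\ve\zeta$ and $z=-1+\beta b$; and (ii) a shallow-water ($\mu\to 0$) expansion of the velocity potential $\Phi$ solving \eqref{EllipticProblem}, together with elliptic estimates on the strip $\Omega_t$ that are uniform in $\mu$ and handle the bathymetry-dependent Neumann condition $\partial_n\Phi|_{z=-1+\beta b}=0$.

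First, the mass equation. Substituting the identity (i) into the first line of \eqref{WW} immediately yields $\partial_t\zeta + \partial_x(h\overline V)=0$ with no error, which is the first line of \eqref{FDGN}. Second, the velocity equation. Here one differentiates the Bernoulli equation (second line of \eqref{WW}) in $x$ and must replace the ``true'' unknown $\partial_x\psi$ and the quadratic terms by expressions in $\overline V$ and $\zeta$. Using the $\mu$-expansion of $\Phi$ one writes $\partial_x\psi = \overline V + \mu(\cdots)$, expands $(\partial_x\psi)^2$ and the quotient term, and collects by powers of $\mu,\ve,\beta$. The $O(1)$ part reproduces $\partial_t\overline V + \partial_x\zeta + \ve\overline V\partial_x\overline V$; the $O(\mu)$ linear part is retained in a \emph{nonlocal} form, built from the multiplier $\mathrm F^{1/2}$ with symbol \eqref{F} dressed with powers of $h$ and $\beta\partial_x b$, i.e. through $\mathcal{T}[h,\beta b]$ (cf. \eqref{opT}), precisely so that the linearization around rest reproduces the water-waves dispersion relation \eqref{Disp relation WW}; and the $O(\mu\ve)$, $O(\mu\beta)$ contributions are computed to leading order, giving $\mu\ve\,\partial_x\mathcal{R}[h,\beta\partial_x b,\overline V]$ plus the remainder. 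The only step where one may legitimately swap an exact operator for its $\mathrm F^{1/2}$-counterpart is inside a term already carrying a prefactor $\mu\ve$ or $\mu\beta$: by Proposition \ref{estimate F1/2} the difference $\mathrm F^{1/2}-\mathrm{Id}$ has size $\mu$, so each such swap only feeds into the $O(\mu^2(\ve+\beta))$ remainder, consistently with the announced precision.

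The substantial part, and the main obstacle, is the quantitative control $|R|_{H^s}\le C(h_0^{-1},\mu_{\max},|\zeta|_{H^{s+n}},|\partial_x\psi|_{H^{s+n}},|b|_{H^{s+n}})$ uniformly on $\mathcal{A}_{\mathrm{SW}}$. This requires sharp elliptic estimates for $\Phi$ and for the correctors in its $\mu$-expansion, carried out after flattening $\Omega_t$; these are delicate because the bottom contributes terms in $\beta\partial_x b$ and because uniformity as $\mu\to 0$ must be tracked at every step. Combined with the product, commutator and Moser-type estimates of Subsection \ref{Classical estimates} (to absorb the variable coefficients $h$ and $\beta\partial_x b$), this produces the estimate but at the cost of losing a fixed number $n$ of derivatives, which is exactly why the hypotheses $b\in H^{s+n}$ and $(\zeta,\psi)\in H^{s+n}$ appear. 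A careful accounting, as noted in the remark following Theorem \ref{thm Justification}, would make $n$ explicit; for the purposes of this paper it is enough to cite \cite{DucheneMMWW21} as stated.
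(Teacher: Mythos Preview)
Your proposal is correct and matches the paper's treatment: the paper does not prove this proposition but simply cites it as Theorem~10.5 of \cite{DucheneMMWW21}, exactly as you identify in your first sentence. The additional outline you provide of the underlying strategy (the exact identity for $\mathcal{G}^\mu$, the $\mu$-expansion of $\Phi$, and the use of Proposition~\ref{estimate F1/2} to justify the $\mathrm{F}^{1/2}$ substitutions) is accurate and helpful context, though not required here.
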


    \begin{prop}\label{Concictency of new model}
        The water waves equations are consistent with the system
        \begin{align*}
        	\begin{cases}
        		\partial_t \zeta + \partial_x(h \overline{V}) = 0
        		\\ 
        		(h  + \mu h \mathcal{T}[h,\beta \partial_x b]) \big{(}\partial_t \overline{V} + \ve \overline{V} \partial_x \overline{V}\big{)} + h \partial_x \zeta +  \mu \varepsilon h(\mathcal{Q}[h,\overline{V}] + \mathcal{Q}_b[h,b,\overline{V}]) = 0,
        	\end{cases}
        \end{align*}
        at the order of precision $O(\mu^2(\varepsilon + \beta))$, where
          \begin{align}
       	\mathcal{Q}[h,\overline{V}] 
       	& =
       	\frac{2}{3h} \partial_x\mathrm{F}^{\frac{1}{2}}\big{(} h^3  (\mathrm{F}^{\frac{1}{2}}\partial_x \overline{V})^2\big{)} 
       	\\
       	\mathcal{Q}_b[h,\beta b,\overline{V}]
       & =   
       h ( \mathrm{F}^{\frac{1}{2}} \partial_x \overline{V})^2 (\beta \partial_x b) 
       +
       \frac{1}{2h}\partial_x \mathrm{F}^{\frac{1}{2}}(h^2 \overline{V}^2 \beta \partial_x^2 b) 
       +
       \overline{V}^2(\beta \partial_x^2b)(\beta \partial_x b).
       \end{align}
    \end{prop}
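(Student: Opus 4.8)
The strategy is to deduce Proposition~\ref{Concictency of new model} from Proposition~\ref{Consistency model Vincent} by an algebraic reformulation of the momentum equation. Fix a solution $(\zeta,\psi)$ of \eqref{WW} on the relevant time scale and let $\overline{V}$ be the averaged velocity attached to it through \eqref{EllipticProblem} and \eqref{VbarDef}; by Proposition~\ref{Consistency model Vincent} the pair $(\zeta,\overline{V})$ solves \eqref{FDGN} with a remainder $\mu^2(\ve+\beta)R$ whose $H^s$ norm is controlled by $C\big(\tfrac{1}{h_0},\mu_{\max},|\zeta|_{H^{s+n}},|\partial_x\psi|_{H^{s+n}},|b|_{H^{s+n}}\big)$. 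The mass equation of \eqref{FDGN} is already the first equation of the target system, so only the momentum equation must be rewritten; moreover, since $b$ is time independent, the mass equation gives the \emph{exact} relation $\partial_t h=-\ve\,\partial_x(h\overline{V})$, which we will use to trade time derivatives of $h$ for spatial ones.

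The heart of the matter is the identity
\[
 \partial_t\big(\mathcal{T}[h,\beta\partial_x b]\,\overline{V}\big)+\ve\,\partial_x\mathcal{R}[h,\beta\partial_x b,\overline{V}]
 =\mathcal{T}[h,\beta\partial_x b]\big(\partial_t\overline{V}+\ve\,\overline{V}\partial_x\overline{V}\big)+\ve\big(\mathcal{Q}[h,\overline{V}]+\mathcal{Q}_b[h,\beta\partial_x b,\overline{V}]\big)+\mu\ve\,r ,
\]
valid for every $(h,\overline{V})$ satisfying $\partial_t h=-\ve\,\partial_x(h\overline{V})$, where $r=r[h,\beta\partial_x b,\overline{V}]$ is bounded in $H^s$ by a constant depending on $|h-1|_{H^{s+2}}$, $\beta|b|_{H^{s+2}}$ and $|\overline{V}|_{H^{s+2}}$. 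Granting this, I would expand $\partial_t(\overline{V}+\mu\mathcal{T}\overline{V})=\partial_t\overline{V}+\mu\,\partial_t(\mathcal{T}\overline{V})$ in the momentum equation of \eqref{FDGN}, substitute the identity (the term $\mu\ve\,\partial_x\mathcal{R}$ then cancels), and multiply by $h$; using $h\big(1+\mu\mathcal{T}[h,\beta\partial_x b]\big)=h+\mu h\mathcal{T}[h,\beta\partial_x b]$ this produces
\[
 \big(h+\mu h\mathcal{T}[h,\beta\partial_x b]\big)\big(\partial_t\overline{V}+\ve\,\overline{V}\partial_x\overline{V}\big)+h\,\partial_x\zeta+\mu\ve\,h\big(\mathcal{Q}[h,\overline{V}]+\mathcal{Q}_b[h,\beta\partial_x b,\overline{V}]\big)=\mu^2(\ve+\beta)\,hR-\mu^2\ve\,h\,r .
\]
Finally, from the non-cavitation condition \eqref{NonCav} (so that $h$ and $\tfrac{1}{h}$ are bounded) and a Moser-type product estimate, the right-hand side is of the form $\mu^2(\ve+\beta)\tilde{R}$ with $\tilde{R}$ uniformly bounded in the Sobolev norms over $\mathcal{A}_{\mathrm{SW}}$, which is exactly the asserted consistency at order $O(\mu^2(\ve+\beta))$.

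To prove the identity, write first $\partial_t(\mathcal{T}[h,\beta\partial_x b]\overline{V})=\mathcal{T}[h,\beta\partial_x b]\partial_t\overline{V}+(\partial_h\mathcal{T}[h,\beta\partial_x b]\cdot\partial_t h)\overline{V}$, where $\partial_h\mathcal{T}\cdot\dot h$ is the linearization of the $h$-dependent coefficients of $\mathcal{T}$ in the direction $\dot h$ (legitimate because $\mathrm{F}^{\frac{1}{2}}$ is a fixed Fourier multiplier commuting with $\partial_t$). Inserting $\partial_t h=-\ve\,\partial_x(h\overline{V})$ together with $\partial_x(h\overline{V})=\overline{V}\partial_x h+h\partial_x\overline{V}$, the identity becomes a purely spatial statement in $h,\overline{V},\beta\partial_x b$, which I would verify by carrying out the computation while treating $\mathrm{F}^{\frac{1}{2}}$ as a black box that commutes with $\partial_x$ but \emph{not} with multiplication. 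On the one hand, setting $\mathrm{F}^{\frac{1}{2}}=\mathrm{Id}$ the two sides agree exactly: this is the classical equivalence of the two forms of the Green--Naghdi momentum equation over a bottom, obtained by differentiating $\mathcal{R}$ and collecting terms --- in particular the second derivatives $\beta\partial_x^2 b$ in $\mathcal{Q}_b$ are produced by $\partial_x$ falling on the factors $\beta\partial_x b$ inside $\mathcal{R}$. On the other hand, the only manipulations that cannot be performed with $\mathrm{F}^{\frac{1}{2}}$ present are those that would require $\mathrm{F}^{\frac{1}{2}}$ to cross a multiplication operator, so the discrepancy between the two sides is a finite sum of commutators of $\mathrm{F}^{\frac{1}{2}}$ (or of $\mathrm{F}^1=(\mathrm{F}^{\frac{1}{2}})^2$) with multiplications by functions built from $h,\overline{V},\beta\partial_x b$; each such commutator is $O(\mu)$ in $H^s$ by Proposition~\ref{estimate F1/2}, together with the boundedness of $\mathrm{F}^{\frac{1}{2}}$, $\mathrm{F}^{\frac{1}{2}}+1$ and $\mathrm{F}^1$ as Fourier multipliers. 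This accounts for the remainder $\mu\ve\,r$.

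The step I expect to be the real work is this last one --- organizing the spatial computation so that the classical cancellation is genuinely recovered at $\mathrm{F}^{\frac{1}{2}}=\mathrm{Id}$ and every surviving term is manifestly an $\mathrm{F}^{\frac{1}{2}}$-commutator, rather than implicitly commuting $\mathrm{F}^{\frac{1}{2}}$ past $h$ or $\overline{V}$ somewhere along the way. Keeping track of how many derivatives of $b$ and $\overline{V}$ are consumed in $r$ (and already in $R$, inherited from Proposition~\ref{Consistency model Vincent}) is what pins down the admissible range of $s$, which is why the statements downstream are only claimed for $s$ large enough.
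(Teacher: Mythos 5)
Your proposal is correct and follows essentially the same route as the paper: both start from Proposition~\ref{Consistency model Vincent}, use $\partial_t h=-\ve\,\partial_x(h\overline{V})$ to convert the commutator $[\partial_t,\mu\mathcal{T}]\overline{V}$ into an $O(\mu\ve)$ spatial term, invoke the classical Green--Naghdi equivalence (the paper cites \cite{Samer11}) as the core algebraic identity at $\mathrm{F}^{\frac{1}{2}}=\mathrm{Id}$, and use Proposition~\ref{estimate F1/2} (i.e.\ $\mathrm{F}^{\frac{1}{2}}-\mathrm{Id}=O(\mu)$) to account for the dispersive multiplier in the already $O(\mu\ve)$ terms. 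The only stylistic difference is that you package the algebra as a single stated identity with an explicit commutator-type remainder $\mu\ve\,r$, whereas the paper first strips $\mathrm{F}^{\frac{1}{2}}$ from the $O(\mu\ve)$ terms, applies the classical identity to obtain $\tilde{\mathcal{Q}},\tilde{\mathcal{Q}}_b$, and then reinserts $\mathrm{F}^{\frac{1}{2}}$; both bookkeepings produce the same $O(\mu^2\ve)$ error.
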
    
    
    \begin{proof} 
        Let us first remark that we only have to work on the second equation of system \eqref{FDGN} and that the first equation can also be written
        \begin{align}\label{first equation FDGN}
            \partial_t h = -\varepsilon\partial_x(h\overline{V}).
        \end{align}
        Then multiplying the second equation of \eqref{FDGN} by $h$ we can write 
        \begin{align*}
            h\partial_t\big(\overline{V} + \mu \mathcal{T}[h,\beta   b]\overline{V}\big) = (h + \mu h\mathcal{T}[h,\beta  b])\partial_t \overline{V} + h[\partial_t, \mu \mathcal{T}[h,\beta b]]\overline{V}.
        \end{align*}
        Now, using \eqref{first equation FDGN} we observe that the following terms are of order $\mu\ve$:
        \begin{align*}
            h[\partial_t, \mu \mathcal{T}[h,\beta b]]\overline{V} +  \mu \varepsilon h \mathcal{R}[h,v],
        \end{align*}
        and so we can use Proposition \ref{estimate F1/2} to trade the multiplier $\mathrm{F}^{\frac{1}{2}}$ with identity and terms of order $\mu^2 \ve$. Thus, following the derivation presented in \cite{Samer11} we obtain that
        \begin{align*}
        		(h  + \mu h \mathcal{T}[h,\beta \partial_x b]) \big{(}\partial_t \overline{V} + \ve \overline{V} \partial_x \overline{V}\big{)} + h \partial_x \zeta +  \mu \varepsilon h(\tilde{\mathcal{Q}}[h,\overline{V}] + \tilde{\mathcal{Q}}_b[h,b,\overline{V}]) = O(\mu^2 \ve)
        \end{align*}
        where
        \begin{align*}
       	\tilde{\mathcal{Q}}[h,\overline{V}] 
       	& =
       	\frac{2}{3h} \partial_x\big{(} h^3  (\partial_x \overline{V})^2\big{)} 
       	\\
       	\tilde{\mathcal{Q}}_b[h,\beta b,\overline{V}]
       & =   
       h (  \partial_x \overline{V})^2 (\beta \partial_x b) 
       +
       \frac{1}{2h}\partial_x (h^2 \overline{V}^2 \beta \partial_x^2 b) 
       +
       \overline{V}^2(\beta \partial_x^2 b)(\beta \partial_x b).
       \end{align*}
        To conclude, we simply apply Proposition \ref{estimate F1/2} once more to see that
        \begin{equation*}
            \tilde{\mathcal{Q}}[h,\overline{V}]  = \mathcal{Q}[h,\overline{V}]  + O(\mu)
        \end{equation*}
        and
        \begin{equation*}
            \tilde{\mathcal{Q}}_b[h, \beta b, \overline{V}]  =  \mathcal{Q}_{b}[h,\beta b, \overline{V}]  + O(\mu).
        \end{equation*}

    \end{proof} 
    \begin{remark}
        If we consider the two-dimensional case where we let $X = (x_1,x_2)$ and $\overline{V},R \in \R^2$, then system \eqref{FDGN} reads
        \begin{equation}\label{FDGN remark}
		       \begin{cases}
			    \partial_t \zeta + \nabla_X \cdot (h\overline{V}) = 0
			    \\ 
			    \partial_t (\overline{V} + \mu \mathcal{T}[h,\beta  b] \overline{V}  ) + \nabla_X \zeta + \frac{\ve}{2}\nabla_X |\overline{V}|^2 + \mu\ve  \nabla_X\mathcal{R}[h,\beta \partial_x b,\overline{V}] = \mu^2(\varepsilon + \beta)R.
		  \end{cases}
	\end{equation}
        In this case, one can exploit the observation that the quantity 
        \begin{equation}\label{U}
            U = \overline{V} + \mu \mathcal{T}[h, \beta b]\overline{V}
        \end{equation}
        approximates the gradient of the velocity potential at the free surface. Consequently, for regular solutions, one can impose the condition $\mathrm{curl} \: U|_{t=0} = 0$ and using the second equation in \eqref{FDGN remark}, we can deduce that $\mathrm{curl} \: U = 0$ whenever the solution is defined. However, this observation does not carry over to \eqref{W-G-N} since the two systems are not equivalent. On the other hand, if $\mathrm{F} = \mathrm{Id}$, then the two systems are equivalent, and one may exploit this insight to deal with the two-dimensional case.
    \end{remark}

    \begin{remark}
        The estimates in Section $2$ can be extended to two dimensions where we note that $F^{\frac{1}{2}}(\xi)$ is a radial function. Also, in light of the previous remark, it could be possible to work on system \eqref{FDGN remark} directly where we estimate the variables $\zeta$, $U$ and with $\overline{V} = \overline{V}[h, \beta b, U]$ uniquely defined by \eqref{U} (see \cite{DucheneIsrawi18} for similar observations). However, doing this change of unknowns would change the mathematical structure of the equations. So that it is not obvious that we can close the energy method in that case. 
    \end{remark}

    \section{A priori estimates}\label{AprioriEstimates}
 
        In this section, we establish \textit{a priori} bounds on the solutions of \eqref{W-G-N}. To this end, we let $\mathbf{U}  = (\zeta, v)$ and for simplicity we introduce the notation
        \begin{align*}
        	\mathscr{T}=  \mathscr{T}[h, \beta b] , \quad \mathcal{Q} = \mathcal{Q}[h, v], \quad \mathcal{Q}_b = \mathcal{Q}_b[h, \beta b, v],
        \end{align*} 
        allowing us to write \eqref{W-G-N} on the more compact form:
        \begin{equation}\label{Matrix W-G-N}
           S(\mathbf{U})(\partial_t \mathbf{U} + M_1(\mathbf{U})\partial_x \mathbf{U})+ M_2(\mathbf{U}) \partial_x\mathbf{U} 
           + 
            Q(\mathbf{U}) + Q_b(\mathbf{U})
           = \mathbf{0},
        \end{equation}
        with
        \begin{equation*}
            S(\mathbf{U}) 
            := 
            \begin{pmatrix}
                1 & 0
                \\
                0 & \mathscr{T} (\cdot)
            \end{pmatrix}
            ,
            \quad 
            M_1(\mathbf{U}) 
            : =
            \begin{pmatrix}
            	0&0
            	\\
            	0&\ve v
            \end{pmatrix}
        	,
        	\quad            
             M_2(\mathbf{U})
            : = 
            \begin{pmatrix}
                \ve v & h
                \\
                h  &   0
            \end{pmatrix},
        \end{equation*}
        and where the quadratic terms are  
        \begin{equation}
        		Q(\mathbf{U}) =
        	\begin{pmatrix}
        		0
        		\\
        		\mu \ve h \mathcal{Q}   
        	\end{pmatrix}
        	, 
        	\quad 
        	Q_b(\mathbf{U}) =
        	\begin{pmatrix}
        		-(\beta \partial_x b)v
        		\\
        		\mu \ve h  \mathcal{Q}_b 
        	\end{pmatrix},
        \end{equation}
        with $\mathcal{Q} $ as defined by  \eqref{Q} and $ \mathcal{Q}_b $ defined by  \eqref{B}.
        We may now give the energy and the energy estimate of \eqref{Matrix W-G-N}. In particular, we make the definition:
	\begin{equation}\label{Energy Es}
		E_s(\mathbf{U})  = \big{(}  \mathrm{J}^s\mathbf{U},  S(\mathbf{U}) \mathrm{J}^s \mathbf{U} \big{)}_{L^2},
	\end{equation}
        allowing us to state the following result.
	
	\begin{prop}\label{Four: Energy estimate s} Let $s> \frac{3}{2}$, $(\mu, \ve, \beta)\in \mathcal{A}_{SW}$, and $(\zeta, v) \in C([0,T]; Y^s_{\mu}(\mathbb{R}) )$ be a solution to \eqref{Matrix W-G-N} on a time interval $[0,T]$ for some $T>0 $.  Moreover, assume $   b \in H^{s+2}(\R) $ and there exist $h_0 \in (0,1)$  such that 
		\begin{equation}\label{non cav energy}
			h_0 - 1 + \beta b \leq \ve \zeta(x,t), \quad   \forall (x,t) \in \mathbb{R} \times [0,T],
		\end{equation}
            and suppose that
            \begin{equation}\label{Bound on solution}
                N(s) : = \ve \sup\limits_{t\in [0,T]} |(\zeta(t, \cdot), v(t, \cdot))|_{Y^s_{\mu}}  + \beta|b|_{H^{s+2}}\leq N^{\star},
            \end{equation}
            for some $N^{\star}\in \R^{+}$. Then, for the energy given by  \eqref{Energy Es},  there holds, 
		\begin{equation}\label{Energy estimate 3/2}
			\frac{d}{dt} E_s(\mathbf{U}) \lesssim  N(s)E_s(\mathbf{U}),
		\end{equation}
            and
		\begin{equation}\label{Equivalence energy vs norm}
			|(\zeta, v)|^2_{Y^s_{\mu}} \lesssim E_s(\mathbf{U}) \lesssim  |(\zeta, v)|^2_{Y^s_{\mu}},
		\end{equation}
		for all $0<t<T$.
	\end{prop}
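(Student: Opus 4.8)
The approach is the classical symmetrizer energy method for the quasilinear system \eqref{Matrix W-G-N}; the only genuinely new features are that the symmetrizer $S(\mathbf U)$ is operator‑valued (its lower‑right block being the nonlocal operator $\mathscr T[h,\beta b]$) and that $\mathscr T$ and the quadratic terms carry the multiplier $\mathrm F^{\frac12}$, all of which are controlled by the estimates of Section \ref{Preliminary results}.

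\emph{Norm equivalence.} Since $\mathscr T[h,\beta b]$ is self‑adjoint, $E_s(\mathbf U)=|\zeta|_{H^s}^2+a(\mathrm J^s v,\mathrm J^s v)$ with $a$ the bilinear form from the proof of Proposition \ref{Inverse of T}. Applying the continuity bound $|a(w,w)|\le c(|h-1|_{H^1},\beta|\partial_x b|_{L^\infty})|w|_{X^0_\mu}^2$ and the coercivity estimate \eqref{Coereicivity}, both to $w=\mathrm J^s v$ (so that $|w|_{X^0_\mu}=|v|_{X^s_\mu}$), yields \eqref{Equivalence energy vs norm}; the constants are uniform because $N(s)\le N^\star$ controls $|h-1|_{H^1}$ and $\beta|\partial_x b|_{L^\infty}$.

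\emph{Energy identity.} Applying $\mathrm J^s$ to \eqref{Matrix W-G-N}, pairing with $\mathrm J^s\mathbf U$ in $L^2$, and using the self‑adjointness of $S(\mathbf U)$, one obtains (writing $S,M_i$ for $S(\mathbf U),M_i(\mathbf U)$)
\[
\tfrac12\tfrac{d}{dt}E_s(\mathbf U)=-(SM_1\partial_x\mathrm J^s\mathbf U,\mathrm J^s\mathbf U)_{L^2}-(M_2\partial_x\mathrm J^s\mathbf U,\mathrm J^s\mathbf U)_{L^2}+\tfrac12((\partial_t S)\mathrm J^s\mathbf U,\mathrm J^s\mathbf U)_{L^2}-(\mathcal R_s,\mathrm J^s\mathbf U)_{L^2},
\]
where $\mathcal R_s=[\mathrm J^s,S]\partial_t\mathbf U+[\mathrm J^s,SM_1]\partial_x\mathbf U+[\mathrm J^s,M_2]\partial_x\mathbf U+\mathrm J^s(Q+Q_b)$. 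The matrix term uses symmetry of $M_2$: $(M_2\partial_x W,W)_{L^2}=-\tfrac12((\partial_x M_2)W,W)_{L^2}$, and the entries of $\partial_x M_2$ are bounded in $L^\infty$ by $\ve|v|_{H^s}+\ve|\zeta|_{H^s}+\beta|b|_{H^s}\lesssim N(s)$ (using $s>\tfrac32$), whence this term is $\lesssim N(s)E_s$. The commutator $[\mathrm J^s,M_2]\partial_x\mathbf U$ is handled by \eqref{Commutator estimates}, again gaining a factor $\ve$ or $\beta$ from $M_2-M_2(0)$; and since $Q,Q_b$ carry prefactors $\mu\ve$ (resp.\ $\beta$), \eqref{Kill half derivative} and \eqref{Equivalence of norms} give $|\mathrm J^s(Q+Q_b)|_{L^2}\lesssim N(s)|(\zeta,v)|_{Y^s_\mu}$.

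\emph{The dispersive contributions.} For $(SM_1\partial_x\mathrm J^s\mathbf U,\mathrm J^s\mathbf U)_{L^2}=\ve(v\partial_x\mathrm J^s v,\mathscr T\mathrm J^s v)_{L^2}$ I would expand $\mathscr T$ through the symmetric form of $a$ and, in the principal part, use that $\mathrm F^{\frac12}$ commutes with $\partial_x$ to write $\mathrm F^{\frac12}\partial_x(v\,\partial_x\mathrm J^s v)=v\,\partial_x(\mathrm F^{\frac12}\partial_x\mathrm J^s v)+(\text{commutators})$; after integration by parts the leading piece is $\int\partial_x(vh^3)(\mathrm F^{\frac12}\partial_x\mathrm J^s v)^2\lesssim N(s)\,\mu|\mathrm F^{\frac12}\partial_x\mathrm J^s v|_{L^2}^2\lesssim N(s)E_s$ by \eqref{Equivalence of norms}, and the commutator pieces are bounded with \eqref{Commutator: F_half} and \eqref{Commutator: J Fdx}. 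The $(\partial_t S)$ term is identical after substituting $\partial_t h=-\ve\partial_x(hv)$, which supplies the factor $\ve$. For $[\mathrm J^s,S]\partial_t\mathbf U$ one first uses the second equation of \eqref{Matrix W-G-N} and \eqref{Inverse est T} to see that $\partial_t v=-\ve v\partial_x v-\mathscr T^{-1}(h\partial_x\zeta+\mu\ve h(\mathcal Q+\mathcal Q_b))$ lies in $X^{s-1}_\mu$, with a norm bound depending only on $|(\zeta,v)|_{Y^s_\mu}$ and $|b|_{H^{s+1}}$; then, expanding $\mathscr T=h+\mu h\mathcal T$, the commutators reduce to $[\mathrm J^s,h]$, $[\mathrm J^s\mathrm F^{\frac12},\cdot]$ and $[\mathrm J^s,\mathrm F^{\frac12}\partial_x(h^3\cdot)]$ applied to $\partial_t\mathbf U$ (and likewise to $\partial_x\mathbf U$ for $[\mathrm J^s,SM_1]$), controlled by \eqref{Commutator: J_sF_half} and \eqref{Commutator: J Fdx}. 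Summing all the bounds gives $\frac{d}{dt}E_s(\mathbf U)\lesssim N(s)E_s(\mathbf U)$, and together with the norm equivalence this is the claim.

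\emph{Main obstacle.} The delicate point is not a single inequality but the bookkeeping in the last step: one must repeatedly commute $\mathrm F^{\frac12}\partial_x$ past the coefficients $h$ and $v$ and integrate by parts, verifying at every stage that each surviving term carrying a weight $\mu$ together with two derivatives can be rewritten in terms of $\mu|\mathrm F^{\frac12}\partial_x\mathrm J^s v|_{L^2}^2$ (or strictly lower order), since the solution only lies in $X^s_\mu\subset H^{s+1/2}$ and one can afford to spend at most half a derivative beyond the $X^s_\mu$ norm — this is what keeps all constants uniform over $(\mu,\ve,\beta)\in\mathcal A_{\mathrm{SW}}$.
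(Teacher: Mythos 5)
Your decomposition — applying $\mathrm J^s$ to $S\partial_t\mathbf U=-SM_1\partial_x\mathbf U-M_2\partial_x\mathbf U-Q-Q_b$, commuting $\mathrm J^s$ with $S$, $SM_1$ and $M_2$, and pairing everything against $\mathrm J^s\mathbf U$ — is a genuinely different organization from the paper's, which first inverts $S$, applies $\mathrm J^s$ to $\partial_t\mathbf U+(M_1+S^{-1}M_2)\partial_x\mathbf U+S^{-1}(Q+Q_b)=0$, and pairs with $S\mathrm J^s\mathbf U$, unpacking the resulting $[\mathrm J^s,\mathscr T^{-1}(h\,\cdot)]$ commutators with identity \eqref{Formula for T[Js,invT]}. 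Your route avoids that identity but pays with the extra commutator $[\mathrm J^s,S]\partial_t\mathbf U$, which forces you to bound $\partial_t v$ from the equation via \eqref{Inverse est T}; this is a legitimate trade and the framework can be made to close.

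There is, however, one step that fails as stated: the claim $|\mathrm J^s(Q+Q_b)|_{L^2}\lesssim N(s)|(\zeta,v)|_{Y^s_\mu}$. Take the second component of $Q$, namely $\mu\ve h\mathcal Q=\frac{2\mu\ve}{3}\partial_x\mathrm F^{\frac{1}{2}}\bigl(h^3(\mathrm F^{\frac{1}{2}}\partial_x v)^2\bigr)$. Estimate \eqref{Kill half derivative} costs a factor $\mu^{-\frac14}$ and half a derivative, yielding at best $\mu^{\frac34}\ve|h^3(\mathrm F^{\frac{1}{2}}\partial_x v)^2|_{H^{s+\frac12}}$; besides requiring $h\in H^{s+\frac12}$ (not available when $\zeta$ is only in $H^s$), this is not controlled by $\ve|v|_{X^s_\mu}^2$. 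Concretely, with $\zeta=0$, $b=0$ and $v=\chi(x)+2\mu^{s+\frac14}\cos(\mu^{-1}x)\chi(x)$ one has $|v|_{X^s_\mu}\sim1$, while the low–high cross term in $(\mathrm F^{\frac{1}{2}}\partial_x v)^2$ gives $|\mathrm J^s(\mu\ve h\mathcal Q)|_{L^2}\gtrsim\ve\mu^{-\frac14}$, which diverges relative to $N(s)|(\zeta,v)|_{Y^s_\mu}\sim\ve$ as $\mu\to0$. The repair is exactly the paper's treatment of $V_3$: do not estimate $\mathrm J^s Q$ in $L^2$; instead integrate by parts inside the pairing $(\mathrm J^s Q,\mathrm J^s\mathbf U)_{L^2}$, sending $\partial_x\mathrm F^{\frac{1}{2}}$ onto $\mathrm J^s v$ so that $\sqrt\mu|\mathrm F^{\frac{1}{2}}\partial_x\mathrm J^s v|_{L^2}\lesssim|v|_{X^s_\mu}$ absorbs the residual half derivative, and what remains, $\sqrt\mu\ve|h^3(\mathrm F^{\frac{1}{2}}\partial_x v)^2|_{H^s}\lesssim\ve|v|_{X^s_\mu}^2$, follows from the $H^s$ product rule together with $|\mathrm F^{\frac{1}{2}}\partial_x v|_{L^\infty}\lesssim|v|_{H^s}$ and $\sqrt\mu|\mathrm F^{\frac{1}{2}}\partial_x v|_{H^s}\lesssim|v|_{X^s_\mu}$. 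With this fix your argument goes through.
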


        \begin{proof}
            We first prove \eqref{Equivalence energy vs norm}. We note that the energy is similar to the bilinear form defined in \eqref{variational formulation}. Thus, the estimate is a direct consequence of Step 2. in the proof of Proposition \ref{Inverse of T} and \eqref{non cav energy}. \\
 
            Next, we prove \eqref{Energy estimate 3/2}. Using   \eqref{Matrix W-G-N}, the self-adjointness of $S(\mathbf{U})$ and the invertibility provided by Proposition \ref{Inverse of T} under assumption \eqref{non cav energy}, we obtain that
        \begin{align*}
            \frac{1}{2} \frac{d}{dt} E_s(\mathbf{U}) 
            & =
            \frac{1}{2}\big{(}  \mathrm{J}^s\mathbf{U},  (\partial_tS(\mathbf{U})) \mathrm{J}^s \mathbf{U} \big{)}_{L^2} 
            +
            \big{(}  \mathrm{J}^s\partial_t \mathbf{U},  S(\mathbf{U}) \mathrm{J}^s \mathbf{U} \big{)}_{L^2} 
            \\
            & =
            \frac{1}{2}
            \big{(}  \mathrm{J}^s\mathbf{U},  (\partial_tS(\mathbf{U})) \mathrm{J}^s \mathbf{U} \big{)}_{L^2} 
            -
            \big{(}   \mathrm{J}^s M_1(\mathbf{U})\partial_x \mathbf{U}, S(\mathbf{U})\mathrm{J}^s \mathbf{U} \big{)}_{L^2} 
            \\
            &
           \hspace{0.5cm} 
            -
           \big{(}  M_2(\mathbf{U})\partial_x \mathrm{J}^s \mathbf{U}, \mathrm{J}^s \mathbf{U} \big{)}_{L^2} 
            -
            \big{(}  [\mathrm{J}^s, (S^{-1}M_2)(\mathbf{U})]\partial_x \mathbf{U},  S(\mathbf{U}) \mathrm{J}^s \mathbf{U} \big{)}_{L^2}
            \\
            & \hspace{0.5cm}
            -
            \big{(}  \mathrm{J}^s (S^{-1}Q)(\mathbf{U}),  S(\mathbf{U}) \mathrm{J}^s \mathbf{U} \big{)}_{L^2}
            -
             \big{(}  \mathrm{J}^s (S^{-1}Q_b)(\mathbf{U}),  S(\mathbf{U}) \mathrm{J}^s \mathbf{U} \big{)}_{L^2}  
            \\
            & = :
            I + II + III + IV + V + VI.
        \end{align*}

        \noindent
        \underline{Control of $I$}. We first use the equation for $\partial_t \zeta$ in \eqref{Matrix W-G-N}, together with the Sobolev embedding $H^{s-1}(\mathbb{R}) \hookrightarrow L^{\infty}(\R)$ with $s-1>\frac{1}{2}$ and the algebra property to deduce the estimate:
        \begin{align}
            |\partial_t h|_{L^{\infty}} \notag
            & \leq
            \ve |\partial_x(hv)|_{L^{\infty}} 
            \\
            &
            \lesssim
            \ve (1+\ve |\zeta|_{H^{s}} +   \beta|b|_{H^s})|v|_{H^s}. 
            \label{L infty est dt zeta}
        \end{align}
        Therefore, by definition \eqref{T[h]} of $\mathscr{T}[h,\beta b]$, using integration by parts, Hölder's inequality, \eqref{Bound on solution}, Sobolev embedding, and \eqref{Equivalence energy vs norm} we obtain the bound
        \begin{align*}
            | I |
            & 
            \leq
            \frac{1}{2} 
            |\big{(}
            \mathrm{J}^s v, (\partial_t h) \mathrm{J}^s v
            \big{)}_{L^2}|
            + 
            \frac{ \mu}{6}
            |\big{(}
            \mathrm{F}^{\frac{1}{2}}\partial_x \mathrm{J}^s v , (\partial_t (h^3)) \mathrm{F}^{\frac{1}{2}}\partial_x \mathrm{J}^s v
            \big{)}_{L^2}|
            \\
            & 
            \hspace{0.5cm}
            +
            \frac{ \mu}{2}
            |\big{(}
            \mathrm{F}^{\frac{1}{2}}\partial_x \mathrm{J}^s v , (\partial_t (h^2))  \beta (\partial_x b)\mathrm{J}^s v
            \big{)}_{L^2}|
            +
            \mu
            |\big{(}
             \mathrm{J}^s v , (\partial_t h)  (\beta \partial_x b)^2\mathrm{J}^s v
            \big{)}_{L^2}|
            \\
            & 
            \lesssim
            N(s)E_s(\mathbf{U}),
        \end{align*}
        for $s>\frac{3}{2}$.\\

		\noindent
		\underline{Control of $II$.} By definition of $\mathscr{T}[h,\beta b]$ we must deal with the terms:
		\begin{align*}
			II
			& =
			-\ve \big{(} \mathrm{J}^s(v\partial_xv), h\mathrm{J}^sv \big{)}_{L^2} 
			+
			\frac{\mu\ve}{3} \big{(} \mathrm{J}^s(v\partial_xv), \partial_x\mathrm{F}^{\frac{1}{2}}(h^3 \partial_x\mathrm{F}^{\frac{1}{2}}\mathrm{J}^sv) \big{)}_{L^2}
                \\
                & 
                \hspace{0.5cm}
                -
			\frac{\mu\ve\beta}{2} \big{(} \mathrm{J}^s(v\partial_xv), \partial_x\mathrm{F}^{\frac{1}{2}}(h^2(\partial_x b) \mathrm{J}^sv) \big{)}_{L^2}
                 +
			\frac{\mu\ve\beta}{2} \big{(} \mathrm{J}^s(v\partial_xv), h^2 (\partial_x b)  \partial_x\mathrm{F}^{\frac{1}{2}}\mathrm{J}^sv \big{)}_{L^2}
			\\
                & 
                \hspace{0.5cm}
                -
                \frac{\mu\ve\beta}{2} \big{(} \mathrm{J}^s(v\partial_xv), h (\beta \partial_x b)^2\mathrm{J}^sv \big{)}_{L^2}
                \\
			 & = :
			II_1 + II_2 + II_3 + II_4 + II_5.
		\end{align*}
		Using integration by parts, we may decompose $II_1$ into two pieces 
		\begin{align*}
		II_1& =
		-\ve \big{(} hv\mathrm{J}^s\partial_xv, \mathrm{J}^sv \big{)}_{L^2} 
			-
			 \ve \big{(} [\mathrm{J}^s,v]\partial_xv, h\mathrm{J}^sv \big{)}_{L^2} 
			 \\
			 &
			  = 
			  \frac{\ve}{2} \big{(} (\partial_x(h v))\mathrm{J}^sv, \mathrm{J}^sv \big{)}_{L^2} 
			  -
			  \ve \big{(} [\mathrm{J}^s,v]\partial_xv, h\mathrm{J}^sv \big{)}_{L^2}. 
		\end{align*}
		Then by Hölder's inequality, Sobolev embedding, and the  commutator estimate \eqref{Commutator estimates}, we obtain the estimate:
		\begin{align*}
			|II_1| \lesssim \ve(1+|h-1|_{H^s}) |v|_{H^s}^3.
		\end{align*}
            We also note that $II_5$ can be estimated in the same way, and we obtain easily that
            \begin{align*}
                |II_5| \lesssim  \ve(1+|h-1|_{H^s}) |b|_{H^{s+1}}^2|v|_{H^s}^3.
            \end{align*}
		For $II_2$, we also use integration by parts to make the observation:
		\begin{align*}
			II_2
			& = 
			-\frac{\mu\ve}{3} \big{(} [\mathrm{J}^s,\mathrm{F}^{\frac{1}{2}}\partial_x (v\cdot)]\partial_xv, h^3 \partial_x\mathrm{F}^{\frac{1}{2}}\mathrm{J}^sv\big{)}_{L^2}
			-
			\frac{\mu\ve}{3} \big{(} \mathrm{F}^{\frac{1}{2}}\partial_x (v\mathrm{J}^s\partial_xv), h^3 \partial_x\mathrm{F}^{\frac{1}{2}}\mathrm{J}^sv\big{)}_{L^2}
			\\
			&
			= : II_2^1 + II_2^2.
		\end{align*}
		Then we treat $II_2^1$ with Hölder's inequality, Sobolev embedding, and \eqref{Commutator: J Fdx} to get
		\begin{align*}
			|II_2^1| 
			& \lesssim (1+|h^3-1|_{H^s})  |v|_{H^s}|v|_{X^s_{\mu}}^2.
		\end{align*}
		On the other hand, we need to decompose $II_2^2$ further and carefully distribute the $\mu$:
		\begin{align*}
			II_2^2 
			& =
				-\frac{\mu\ve}{3} \big{(}v \mathrm{F}^{\frac{1}{2}}\mathrm{J}^s\partial_x^2v, h^3 \partial_x\mathrm{F}^{\frac{1}{2}}\mathrm{J}^sv\big{)}_{L^2} 
				-
				\frac{\mu\ve}{3} \big{(} [\mathrm{F}^{\frac{1}{2}},v]\mathrm{J}^s\partial_x^2v, h^3 \partial_x\mathrm{F}^{\frac{1}{2}}\mathrm{J}^sv\big{)}_{L^2} 
				\\
				& 
				\hspace{0.5cm} 
				-
				\frac{\mu\ve}{3} \big{(} \mathrm{F}^{\frac{1}{2}}\big{(}(\partial_x v)\mathrm{J}^s\partial_xv\big{)}, h^3 \partial_x\mathrm{F}^{\frac{1}{2}}\mathrm{J}^sv\big{)}_{L^2}  
				\\
				& 
				=:
				II_2^{2,1} + II_2^{2,2} + II_2^{2,3}.
 		\end{align*}
 		For $II_2^{2,1}$, we simply integrate by parts and argue as we did for $II_1$ to obtain
		\begin{align*}
			|II_2^{2,1}|
			& \lesssim
			\mu\ve |\partial_x(h^3v)|_{H^{s-1}} |\mathrm{F}^{\frac{1}{2}}\mathrm{J}^s\partial_xv|_{H^s}^2
			\\
			& \lesssim 
			\ve (1+ |h^3-1|_{H^s} )|v|_{H^s} |v|_{X_{\mu}^s}^2.
		\end{align*}
		For $II_2^{2,2}$, we use Hölder's inequality, Sobolev embedding, and \eqref{Commutator: F_half} to directly obtain that
        \begin{align*}
        	|II_2^{2,2} | 
        	& 
        	\lesssim \mu  \ve (1+ |h^3-1|_{H^s})|v|_{X^s_{\mu}}  |\mathrm{F}^{\frac{1}{2}} \partial_x^2  v|_{H^{s-1}} |\mathrm{F}^{\frac{1}{2}}\partial_xv|_{H^s}
        	\\
        	& \lesssim   \ve (1+ |h^3-1|_{H^s})|v|_{X^s_{\mu}}^3.
        \end{align*}
        For $II_2^{2,3}$, we also need to be careful in the distribution of $\mu$. In fact, we need to use Plancherel, then Cauchy-Schwarz and \eqref{F -1/2 } to get
        \begin{align*}
	       | II_2^{2,3}|
	        &= 
	        \frac{\mu\ve}{3} |\big{(}\mathrm{F}^{\frac{1}{2}}\mathrm{J}^s\partial_xv,  \mathrm{F}^{-\frac{1}{2}}\Big{(}(\partial_x v)\mathrm{F}^{\frac{1}{2}}\big{(}h^3 \partial_x\mathrm{F}^{\frac{1}{2}}\mathrm{J}^sv\big{)}\Big{)}\big{)}_{L^2}  |
	        \\
	        &
	        \lesssim \ve  |v|_{X^s_{\mu}}
	        \big{(}
	        \sqrt{\mu}| (\partial_x v)\mathrm{F}^{\frac{1}{2}}\big{(}h^3 \partial_x\mathrm{F}^{\frac{1}{2}}\mathrm{J}^sv\big{)}|_{L^2} + \mu^{\frac{3}{4}}|\mathrm{D}^{\frac{1}{2}} \Big{(}(\partial_x v)\mathrm{F}^{\frac{1}{2}}\big{(}h^3 \partial_x\mathrm{F}^{\frac{1}{2}}\mathrm{J}^sv\big{)}\Big{)}|_{L^2}
	         \big{)}
	         \\
	         & =: \ve  |v|_{X^s_{\mu}} (A+B).
        \end{align*}
    	Then estimate $A$ by Hölder's inequality, the Sobolev embedding, and the boundedness of $\mathrm{F}^{\frac{1}{2}}$ on $L^2(\R)$ to get
    	\begin{align*}
    		A \lesssim |v|_{H^s}(1+ |h^3-1|_{H^s})|v|_{X^s_{\mu}},
    	\end{align*}
   		while for $B$, we also use \eqref{D_half - fracLeib - Sob} and \eqref{Kill half derivative} to get	
   		\begin{align*}
   			B 
   			& \lesssim \mu^{\frac{3}{4}} |v|_{H^s} |\mathrm{F}^{\frac{1}{2}}\big{(}h^3 \partial_x\mathrm{F}^{\frac{1}{2}}\mathrm{J}^sv\big{)}|_{H^{\frac{1}{2}}}
   			\\
   			& 
   			\lesssim 
   			\sqrt{\mu}
   			|v|_{H^s} |h^3 \partial_x\mathrm{F}^{\frac{1}{2}}\mathrm{J}^sv|_{L^2}
   			\\
   			&
   			\lesssim |v|_{H^s} (1+|h^3-1|_{H^s}) |v|_{X^s_{\mu}}.
   		\end{align*}
            Next, we use integration by parts to decompose $II_3$ into several pieces:
            \begin{align*}
                II_3 
                & = 
                -
			\frac{\mu\ve\beta}{2} \big{(} [\mathrm{J}^s,v]\partial_xv, \partial_x\mathrm{F}^{\frac{1}{2}}(h^2(\partial_x b) \mathrm{J}^sv) \big{)}_{L^2}
                +
			\frac{\mu\ve\beta}{2} \big{(} \mathrm{F}^{\frac{1}{2}}((\partial_xv)\mathrm{J}^s\partial_xv), h^2(\partial_x b) \mathrm{J}^sv \big{)}_{L^2}
                \\
                & 
                \hspace{0.5cm}
                +
                \frac{\mu\ve\beta}{2} \big{(} \mathrm{F}^{\frac{1}{2}}(v\mathrm{J}^s\partial_x^2v), h^2(\partial_x b) \mathrm{J}^sv \big{)}_{L^2}
                \\
                & = :
                II_3^1 + II_3^2 + II_3^3.
            \end{align*}
            Then for $II_3^1$, we apply \eqref{Commutator estimates}, \eqref{Kill half derivative}, and \eqref{D_half - fracLeib - Sob} to obtain that
            \begin{align*}
                |II_3^1| &  \lesssim \ve |v|_{H^s}^2 \mu^{\frac{3}{4}}|\mathrm{D}^{\frac{1}{2}}(h^2(\partial_x b) \mathrm{J}^sv)|_{L^2}
                \\ 
                & 
                \lesssim \ve (1 + |h^2-1|_{H^s})|b|_{H^{s+1}}|v|_{X^s_{\mu}}^3.
            \end{align*}
            For $II_3^2$, we argue as for $II_2^{2,3}$ to get that
            \begin{align*}
                |II_3^2| 
                & \lesssim 
                \ve \mu | \big{(}F^{\frac{1}{2}}\mathrm{J}^s\partial_xv, F^{-\frac{1}{2}}( (\partial_xv)\mathrm{F}^{\frac{1}{2}}(h^2(\partial_x b) \mathrm{J}^sv)) \big{)}_{L^2}|
                \\
                & 
                \lesssim 
                \ve |v|_{X^s_{\mu}} \sqrt{\mu}(|(\partial_xv)\mathrm{F}^{\frac{1}{2}}(h^2(\partial_x b) \mathrm{J}^sv)|_{L^2} + \mu^{\frac{1}{4}}|\mathrm{D}^{\frac{1}{2}}((\partial_xv)\mathrm{F}^{\frac{1}{2}}(h^2(\partial_x b) \mathrm{J}^sv))|_{L^2})
                \\
                & 
                \lesssim \ve |v|_{X^s_{\mu}}(1+|h^2-1|_{H^s}) |b|_{H^{s+1}}|v|_{H^s}^2.
            \end{align*}
            For $II_3^3$, we first make the decomposition
            \begin{align*}
                II_3^2 
                & = 
                \frac{\mu\ve\beta}{2} \big{(} [\mathrm{F}^{\frac{1}{2}},v]\mathrm{J}^s\partial_x^2v, h^2(\partial_x b) \mathrm{J}^sv \big{)}_{L^2} 
                +
                \frac{\mu\ve\beta}{2} \big{(} v\mathrm{F}^{\frac{1}{2}}\mathrm{J}^s\partial_x^2v, h^2(\partial_x b) \mathrm{J}^sv \big{)}_{L^2} 
                \\
                & = : 
                II_3^{2,1} + II_{3}^{2,2}.
            \end{align*}
            For $II_3^{2,1}$, we employ Hölder's inequality, Sobolev embedding, and \eqref{Commutator: F_half} to get that
            \begin{align*}
                |II_3^{2,1}| \lesssim  \mu \ve |v|_{X^s_{\mu}} |\mathrm{F}^{\frac{1}{2}}\partial_x v|_{H^{s}}|b|_{H^{s+1}}(1+|h^2-1|_{H^s})|v|_{H^s}.
            \end{align*}
            Lastly, for $II_3^{2,2}$, we use integration by parts to make the observation that
            \begin{align*}
                II_{3}^{2,2} 
                & = 
                -
                \frac{\mu\ve\beta}{2} \big{(} \mathrm{F}^{\frac{1}{2}}\mathrm{J}^s\partial_xv, (vh^2(\partial_x b)) v\mathrm{J}^s\partial_x v \big{)}_{L^2}
                -
                \frac{\mu\ve\beta}{2} \big{(} \mathrm{F}^{\frac{1}{2}}\mathrm{J}^s\partial_xv, (\partial_x (vh^2(\partial_x b)) )\mathrm{J}^sv \big{)}_{L^2}
                \\
                & =
                \frac{\mu\ve\beta}{2} \big{(} \mathrm{F}^{\frac{1}{2}}\mathrm{J}^s\partial_xv, vh^2(\partial_x b) [\mathrm{J}^s,v]\partial_x v \big{)}_{L^2} - II_4
                 -
                \frac{\mu\ve\beta}{2} \big{(} \mathrm{F}^{\frac{1}{2}}\mathrm{J}^s\partial_xv, (\partial_x (vh^2(\partial_x b)) )\mathrm{J}^sv \big{)}_{L^2} 
                .
            \end{align*}
            Then we may use Hölder's inequality, \eqref{Commutator estimates}, \eqref{Bound on solution}, \eqref{Equivalence energy vs norm}, and Sobolev embedding to get that
            \begin{align*}
                |II_3^{2,2} + II_4| \lesssim N(s) E_s(\mathbf{U}).
            \end{align*}
	   	Gathering all these estimates, using \eqref{Bound on solution} and \eqref{Equivalence energy vs norm},  allows us to conclude that
	   	\begin{equation*}
	   		|II| \lesssim N(s) E_s(\mathbf{U}).
	   	\end{equation*}

        \noindent
        \underline{Control of $III$.}
        Then by definition, we must estimate the terms:
        \begin{align*}
            III
            & =
            -\ve \big{(} v  \partial_x \mathrm{J}^s\zeta , \mathrm{J}^s \zeta \big{)}_{L^2} 
            -
             \big{(} h \partial_x \mathrm{J}^s v, \mathrm{J}^s \zeta  \big{)}_{L^2}
            -
             \big{(} h \partial_x \mathrm{J}^s \zeta, \mathrm{J}^s v  \big{)}_{L^2}
        \end{align*}
        For the estimate on these terms, we integrate by parts and apply Hölder's inequality, Sobolev embedding, and \eqref{Equivalence energy vs norm} to deduce
        \begin{align*}
            |III| 
            & 
            \leq 
            \frac{\ve}{2}|\big{(} (\partial_xv)  \mathrm{J}^s\zeta , \mathrm{J}^s \zeta \big{)}_{L^2}|
            +
             |\big{(} (\partial_xh)  \mathrm{J}^s v, \mathrm{J}^s \zeta  \big{)}_{L^2}|
            \\ 
            & 
            \lesssim 
            N(s)E_s(\mathbf{U}).
        \end{align*}\\

        \noindent
        \underline{Control of $IV$.} We decompose each term in $IV$ and estimate them separately. In particular, we must estimate the following terms,
        \begin{align*}
            IV 
            & =
            -\ve \big{(} [\mathrm{J}^s, v] \partial_x \zeta, \mathrm{J}^s \zeta \big{)}_{L^2}
            -
            \big{(} [\mathrm{J}^s, h] \partial_x v, \mathrm{J}^s \zeta \big{)}_{L^2}
            -
            \big{(} [\mathrm{J}^s, \mathscr{T}^{-1}(h \boldsymbol{\cdot})] \partial_x \zeta, \mathscr{T} \mathrm{J}^s v \big{)}_{L^2}
            \\
            & 
            =: IV_1 + IV_2 +  IV_3.
        \end{align*}
        The first two terms are easily controlled by Cauchy-Schwarz and \eqref{Commutator estimates}:
        \begin{align*}
            |IV_1| + |IV_2| \lesssim \ve  |v|_{L^{\infty}} |\zeta|^2_{H^s} + (\ve |\zeta|_{L^{\infty}} + \beta |b|_{L^{\infty}})|\zeta|_{H^s}|v|_{H^s}.
        \end{align*}
        Then use Sobolev embedding and \eqref{Equivalence energy vs norm} to conclude. However, need to decompose the remaining term further.  To do so, we make the observation that
        \begin{align}\label{Formula for T[Js,invT]}
            \mathscr{T}[\mathrm{J}^s,\mathscr{T}^{-1}(h\boldsymbol{\cdot})]f 
                &
                =
                -[\mathrm{J}^s,h]\mathscr{T}^{-1}(hf)+\frac{\mu}{3}\partial_x \mathrm{F}^{\frac{1}{2}}[\mathrm{J}^s,h^3]\partial_x\mathrm{F}^{\frac{1}{2}}\big{(}\mathscr{T}^{-1}(hf)\big{)}
                \\ 
                &\notag
                \hspace{0.5cm}
                - \frac{\mu}{2} \partial_x \mathrm{F}^{\frac{1}{2}}[\mathrm{J}^s,h^2 \beta (\partial_xb)]\mathscr{T}^{-1}(hf) 
                +
                \frac{\mu}{2} [\mathrm{J}^s,h^2 \beta (\partial_xb)]\partial_x \mathrm{F}^{\frac{1}{2}}\mathscr{T}^{-1}(hf)
                \\
                &\notag
                \hspace{0.5cm}
                +
                \mu [\mathrm{J}^s, h(\beta \partial_x b)^2]\mathscr{T}^{-1}(hf) +[\mathrm{J}^s,h]f .
        \end{align}
        Then by this identity, the self-adjointness of $\mathscr{T}[h,\beta b]$, and integration by parts, we may decompose $IV_3$ into six pieces:
        \begin{align*}
            IV_3
            & =
             \big{(}[\mathrm{J}^s,h]\mathscr{T}^{-1}(h \partial_x \zeta), \mathrm{J}^s v \big{)}_{L^2}
            +
             \frac{\mu}{3}\big{(}[\mathrm{J}^s,h^3]\partial_x\mathrm{F}^{\frac{1}{2}}\big{(}\mathscr{T}^{-1}(h\partial_x \zeta)\big{)}, \partial_x \mathrm{F}^{\frac{1}{2}}\mathrm{J}^s v \big{)}_{L^2}
            \\ 
            &
            \hspace{0.5cm}
            -
            \frac{\mu}{2}
            \big{(}
            [\mathrm{J}^s,h^2 \beta (\partial_xb)]\mathscr{T}^{-1}(h\partial_x\zeta) ,
            \partial_x \mathrm{F}^{\frac{1}{2}}
            \mathrm{J}^s v 
            \big{)}_{L^2}
            -
            \frac{\mu}{2}
            \big{(}
            [\mathrm{J}^s,h^2 \beta (\partial_xb)]\partial_x \mathrm{F}^{\frac{1}{2}}\mathscr{T}^{-1}(h\partial_x\zeta) ,
            \mathrm{J}^s v 
            \big{)}_{L^2}
            \\
            &
            \hspace{0.5cm}
            -
            \mu 
            \big{(}
            [\mathrm{J}^s, h(\beta \partial_x b)^2]\mathscr{T}^{-1}(h\partial_x \zeta), \mathrm{J}^sv 
            \big{)}_{L^2}
            -
            \big{(}[\mathrm{J}^s,h]\partial_x \zeta, \mathrm{J}^s v \big{)}_{L^2} 
            \\
            & = :
            IV_3^1 + IV_3^2 + IV_3^3 + IV_3^4 + IV_3^5 + IV_3^6.
        \end{align*}
        For $IV_3^1$, use Cauchy-Schwarz inequality, \eqref{Commutator estimates}, Sobolev embedding, \eqref{Inverse est T}, \eqref{Bound on solution}, and the algebra property of $H^{s-1}(\R)$ for $s-1>\frac{1}{2}$ to get the bound
        \begin{align*}
            |IV^1_3| 
            & \lesssim
            (\ve |\partial_x \zeta|_{L^{\infty}} + \beta |\partial_x b|_{L^{\infty}})|h \partial_x \zeta |_{H^{s-1}} |v|_{H^s} 
            \\
            & 
            \lesssim \ve |\zeta|_{H^s}^2|v|_{H^s} + \beta|\partial_x b|_{L^{\infty}}|\zeta|_{H^s}|v|_{H^s}.
        \end{align*}
        Similarly, when estimating $IV_3^2$ we also use \eqref{Equivalence of norms} and the inverse estimate \eqref{Inverse est F T} to deduce
        \begin{align*}
            |IV_3^2|
            & \lesssim 
            \ve \mu |\zeta|_{H^s}|\mathrm{F}^{\frac{1}{2}}\mathscr{T}^{-1} (h \partial_x \zeta )|_{H^s} |\partial_x \mathrm{F}^{\frac{1}{2}}\mathrm{J}^sv|_{H^s} 
            \\
            & 
            \lesssim 
            \ve |\zeta|_{H^s}^2|v|_{X^s_{\mu}}
            + \beta|\partial_x b|_{L^{\infty}}|\zeta|_{H^s}|v|_{X_{\mu}^s}.
        \end{align*}
        Next, we see that $IV_3^3 + IV_3^4 + IV_3^5$ offers no other difficulties. In fact, applying the same estimates as above, with \eqref{Bound on solution}, yields 
        \begin{align*}
            |IV_3^3| + |IV_3^4| + |IV_3^5| \lesssim  (1+\ve|\zeta|_{H^s} )\beta|\partial_x b |_{L^{\infty}}|\zeta|_{H^s} |v|_{X^s_{\mu}}.
        \end{align*}
        Lastly, $IV_3^6$ is controlled by Cauchy-Schwarz inequality, \eqref{Commutator estimates} and Sobolev emebedding:
        \begin{align*}
            |IV_3^6|\lesssim \ve |\zeta|_{H^s}^2 |v|_{H^s}.
        \end{align*} \\
    
    \noindent
    \underline{Control of $V$.} We need to make a careful decomposition of the following term
    	\begin{align*}
    		\mathscr{T}\big{(}\mathrm{J}^s\mathscr{T}^{-1}(h \mathcal{Q})\big{)} 
    		=
    		\frac{2 }{3}	\mathscr{T}
    		\Big{(}
    		\mathrm{J}^s\mathscr{T}^{-1}
    		\big{(}
    		\partial_x \mathrm{F}^{\frac{1}{2}}(h^3 (\mathrm{F}^{\frac{1}{2}} \partial_xv)^2)
    		\big{)} 
    		\Big{)}. 
    	\end{align*}
    	To do so, we use the identity 
    	\begin{align*}
    		\mathscr{T}\Big{(}\mathrm{J}^s\mathscr{T}^{-1}\big{(}\mathrm{F}^{\frac{1}{2}}\partial_x(f g)\big{)}\Big{)}  
    		 & =
    		 -[\mathrm{J}^s,\mathscr{T}]\mathscr{T}^{-1}(\mathrm{F}^{\frac{1}{2}}\partial_x(f g ))
    		 +
    		 [\mathrm{J}^s,\mathrm{F}^{\frac{1}{2}}\partial_x(f\cdot)]g
    		 \\
    		 & 
    		 \hspace{0.5cm}
    		 +
    		 \mathrm{F}^{\frac{1}{2}}\partial_x(f\mathrm{J}^s g),
    	\end{align*}
    	then use integration by parts to make the decomposition
    	\begin{align*}
    		V 
    		& = 
    		\frac{2\mu \ve}{3} \Big{[}
    		\big{(}[\mathrm{J}^s,\mathscr{T}]\mathscr{T}^{-1}(h \mathcal{Q}), \mathrm{J}^s v \big{)}_{L^2} 
    		+
    		\big{(}[\mathrm{J}^s,h^3 ] \big{(}(\mathrm{F}^{\frac{1}{2}} \partial_xv)^2 \big{)}, \mathrm{F}^{\frac{1}{2}}\partial_x\mathrm{J}^s v \big{)}_{L^2}
    		\\
    		& 
    		\hspace{0.5cm}
    		+
    		\big{(}h^3\mathrm{J}^s \big{(}(\mathrm{F}^{\frac{1}{2}} \partial_xv)^2 \big{)}, \mathrm{F}^{\frac{1}{2}}\partial_x\mathrm{J}^s v \big{)}_{L^2}
    		\Big{]}
    		\\
    		& = 
    		V_1 + V_2 + V_3.
    	\end{align*}
        We treat $V_1$ first, where we must control the following terms:
        \begin{align*}
           	V_1
            	& = \mu\ve
            	\big{(}[\mathrm{J}^s,h]\mathscr{T}^{-1}(h \mathcal{Q}), \mathrm{J}^s v \big{)}_{L^2}
            	+
            	\frac{\mu^2 \ve }{3}\big{(}[\mathrm{J}^s,h^3]\partial_x\mathrm{F}^{\frac{1}{2}}\big{(}\mathscr{T}^{-1}(h \mathcal{Q} )\big{)}, \partial_x \mathrm{F}^{\frac{1}{2}}\mathrm{J}^s v \big{)}_{L^2}
            	\\ 
            	&
            	\hspace{0.5cm}
            	-
            	\frac{\mu^2 \ve}{2}
            	\big{(}
            	[\mathrm{J}^s,h^2 \beta (\partial_xb)]\mathscr{T}^{-1}(h \mathcal{Q}) ,
            	\partial_x \mathrm{F}^{\frac{1}{2}}
            	\mathrm{J}^s v 
            	\big{)}_{L^2}
            	-
            	\frac{\mu^2\ve}{2}
            	\big{(}
            	[\mathrm{J}^s,h^2 \beta (\partial_xb)]\partial_x \mathrm{F}^{\frac{1}{2}}\mathscr{T}^{-1}(h \mathcal{Q}) ,
            	\mathrm{J}^s v 
            	\big{)}_{L^2}
            	\\
            	&
            	\hspace{0.5cm}
            	-
            	\mu^2\ve
            	\big{(}
            	[\mathrm{J}^s, h(\beta \partial_x b)^2]\mathscr{T}^{-1}(h\partial_x  \mathcal{Q}), \mathrm{J}^sv 
            	\big{)}_{L^2}
            	\\
            	& = :
            	V_1^1+V_1^2+V_1^3+V_1^4+V_1^5.
        \end{align*}
        To estimate the first term, $V_1$, we simply argue as above. Indeed, by \eqref{Commutator estimates}, the Sobolev embedding, and using that $X^{s-1}(\R) \subset H^{s-1}(\R)$ with \eqref{Inverse est T} yields
        \begin{align*}
            |V_1^1|
            &\lesssim  \mu\ve
            | [\mathrm{J}^s,h]\mathscr{T}^{-1}(h\mathcal{Q})|_{L^2}  |v|_{H^s}
            \\
            & 
            \lesssim \mu \ve
            |h\mathcal{Q}|_{H^{s-1}} |\zeta|_{H^s} |v|_{H^s}.
        \end{align*}
        Then to estimate $|h\mathcal{Q}|_{H^{s-1}}$, we 
        first observe by the interpolation inequality \eqref{Interpolation} and Young's inequality that
    	\begin{align*}
    		\sqrt{\mu}|\mathrm{F}^{\frac{1}{2}} \partial_x v|_{H^{s-{\frac{1}{2}}}}^2 
    		& \lesssim 	|\mathrm{F}^{\frac{1}{2}} \partial_x v|_{H^{s-{1}}}\sqrt{\mu}|\mathrm{F}^{\frac{1}{2}} \partial_x v|_{H^{s}}
    		\\
    		& 
    		\lesssim 
    			| v|^2_{H^{s}}
                 +
                \mu |\mathrm{F}^{\frac{1}{2}} \partial_x v|_{H^{s}}^2.
    	\end{align*}
        Thus, we may estimate $|h\mathcal{Q}|_{H^{s-1}}$
        by using \eqref{Inverse est T}, the algebra property of $H^{s-{\frac{1}{2}}}(\R)$ for $s-\frac{1}{2}>1$ and combined with \eqref{Commutator: J_sF_half} and \eqref{Kill half derivative}:
        \begin{align*}
        	\mu |h\mathcal{Q}|_{H^{s-1}} 
        	& = 
       		\frac{\mu }{3} |\partial_x\mathrm{F}^{\frac{1}{2}}\big{(}h^3((\mathrm{F}^{\frac{1}{2}}\partial_x v)^2)) |_{H^{s-1}}
       		\\
        	& \lesssim 
            \mu  |\partial_x\mathrm{F}^{\frac{1}{2}}((\mathrm{F}^{\frac{1}{2}}\partial_x v)^2)  |_{H^{s-1}}
            +
            \mu  |\partial_x\mathrm{F}^{\frac{1}{2}}\big{(}(h^3-1)\mathrm{F}^{\frac{1}{2}}((\partial_x v)^2)\big{)} |_{H^{s-1}}
            \\
            & \lesssim
            \sqrt{\mu} |\mathrm{F}^{\frac{1}{2}}\partial_xv|^2_{H^{s-\frac{1}{2}}}
            +
             \mu |[\mathrm{J}^{s}\mathrm{F}^{\frac{1}{2}},h^3] ((\mathrm{F}^{\frac{1}{2}}\partial_x v)^2)|_{L^2}
            +
            \mu 
            | (h^3-1)\mathrm{F}^{\frac{1}{2}}((\mathrm{F}^{\frac{1}{2}}\partial_x v)^2)|_{H^s} 
            \\ 
            &  
            \lesssim N(s)|v|_{H^s}^2,
        \end{align*}
        and using \eqref{Bound on solution}, we deduce that
        \begin{align*}
            |V_1^1|
            & \lesssim N(s) |v|_{H^s}^2.
        \end{align*}
        Next, we consider $V_1^2$ and observe that we can have a similar bound. Indeed, using \eqref{Bound on solution},  \eqref{Commutator estimates}, and \eqref{Commutator: J_sF_half}  we observe that
        \begin{align*}
            |V_1^2|
            & \lesssim
              \mu^2\ve |[\mathrm{J}^s,h^3] \partial_x\mathrm{F}^{\frac{1}{2}}\mathscr{T}^{-1}(h\mathcal{Q})|_{L^2}  |\mathrm{F}^{\frac{1}{2}} \partial_x v |_{H^s}
              \\
              & \lesssim
              \mu^{\frac{3}{2}} \ve  |\mathrm{F}^{\frac{1}{2}}\mathscr{T}^{-1}(h\mathcal{Q})|_{H^s} |v|_{X^s_{\mu}}
              \\
              & \lesssim   \mu\ve  |h\mathcal{Q}|_{H^{s-1}} |v|_{X^s_{\mu}},
        \end{align*}
        and we use the previous estimates to obtain that
        \begin{align*}
           | V_1^2| \lesssim N(s)|v|_{X^s_{\mu}}^2.
        \end{align*}
        Moreover, we note that it is straightforward to estimate $|V_1^3|+...+|V_1^5|$  arguing as we did for $V_1^1$ and $V_1^2$. Thus, gathering all these estimates and using \eqref{Equivalence energy vs norm} yields,
        \begin{align*}
            |V_1|\lesssim N(s)E_s(\mathbf{U}).
        \end{align*}
		Next, we estimate $V_2$ using Hölder's inequality, \eqref{Bound on solution} and \eqref{Commutator estimates} to obtain
		\begin{align*}
			|V_2|&  \lesssim
			 \mu \ve 	(1+|h^3-1|_{H^s})|\mathrm{F}^{\frac{1}{2}}v|_{H^s}^2|\partial_x \mathrm{F}^{\frac{1}{2}}v|_{H^s}
			 \\
			  & \lesssim  N(s)|v|^2_{X^s_{\mu}}.
		\end{align*}
		Lastly, for $V_3$, we inject a commutator and use Hölder's inequality, Sobolev embedding, \eqref{Bound on solution}  and \eqref{Commutator estimates} to get
		\begin{align*}
			|V_3| 
			& \lesssim
			 \mu \ve |\big{(}h^3[\mathrm{J}^s ,(\mathrm{F}^{\frac{1}{2}} \partial_xv)](\mathrm{F}^{\frac{1}{2}} \partial_xv) , \mathrm{F}^{\frac{1}{2}}\partial_x\mathrm{J}^s v \big{)}_{L^2}|
			 +\mu \ve
			  |\big{(}h^3(\mathrm{F}^{\frac{1}{2}} \partial_x v)\mathrm{J}^s \mathrm{F}^{\frac{1}{2}} \partial_xv, \mathrm{F}^{\frac{1}{2}}\partial_x\mathrm{J}^s v \big{)}_{L^2} |
			  \\
			  &
			  \lesssim \mu \ve |\mathrm{F}^{\frac{1}{2}} \partial_xv|_{H^s}|\mathrm{F}^{\frac{1}{2}} \partial_xv|_{H^{s-1}}  |\mathrm{F}^{\frac{1}{2}}\partial_x\mathrm{J}^s v  |_{L^2}
			  \\
			  &  \lesssim N(s)|v|_{X^s_{\mu}}^2.
		\end{align*}\\

        \noindent
        \underline{Control of $VI$}. To complete the proof we need to estimate the remaining part:
        \begin{align*}
            VI 
            & =
            -
            \ve \mu 
            \big{(}
            [\mathrm{J}^s, \mathscr{T}] \mathscr{T}^{-1} \mathcal{Q}_b, \mathrm{J}^sv
            \big{)}_{L^2}
            -
            \ve \mu
            \big{(}
            \mathrm{J}^s\mathcal{Q}_b, \mathrm{J}^sv
            \big{)}_{L^2}
            \\
            & 
            \hspace{0.5cm}
            -\big{(}\mathrm{J}^s((\beta \partial_x b)v),\mathrm{J}^s\zeta\big{)}_{L^2} 
            \\
            & = : VI_1 + VI_2 + VI_3.
        \end{align*}
        The estimate in $VI$ is similar to the one of $V$, where we now have to deal with the following terms
        \begin{align*}
             VI_1 
            & = 
            -\mu\ve 
            \big{(} [\mathrm{J}^s,h]\mathscr{T}^{-1}\mathcal{Q}_b,  \mathrm{J}^s v \big{)}_{L^2}
            +
            \frac{\mu^2 \ve }{3}
            \big{(} \mathrm{F}^{\frac{1}{2}} \partial_x\big{(}[\mathrm{J}^s,h^3]\mathrm{F}^{\frac{1}{2}} \partial_x(\mathscr{T}^{-1}\mathcal{Q}_b\big{)},  \mathrm{J}^s v \big{)}_{L^2}
            \\
            & 
            \hspace{0.5cm}
            -
            \frac{ \mu^2 \ve }{2}
            \big{(} \mathrm{F}^{\frac{1}{2}} \partial_x\big{(}[\mathrm{J}^s,h^2\beta (\partial_x b)]\mathscr{T}^{-1}\mathcal{Q}_b\big{)},  \mathrm{J}^s v \big{)}_{L^2}
            \\
            & 
            \hspace{0.5cm}
            +
            \frac{\mu^2 \ve }{2}
            \big{(}[\mathrm{J}^s,h^2\beta (\partial_x b)]\mathrm{F}^{\frac{1}{2}} \partial_x(\mathscr{T}^{-1}\mathcal{Q}_b),  \mathrm{J}^s v \big{)}_{L^2}
            \\ 
            & 
            \hspace{0.5cm}
            -
             \mu^2 \ve  
            \big{(} [\mathrm{J}^s,h(\beta\partial_x b)^2]\mathscr{T}^{-1}\mathcal{Q}_b,  \mathrm{J}^s v \big{)}_{L^2}
            \\
            & =:
            VI_1^1 + VI_1^2 + VI_1^3 + VI_1^4 + VI_1^5.    
        \end{align*}
        Each term is treated similarly. For instance, take $VI_1^2$, which is the term with the least margin. Arguing as above, we use Cauchy-Schwarz inequality, \eqref{Commutator estimates}, \eqref{Inverse est F T} and \eqref{Bound on solution}  to deduce that
        \begin{align*}
            |VI_1^2| \lesssim \ve \mu|\mathcal{Q}_b|_{H^{s-1}} |v|_{X^s_{\mu}},
        \end{align*}
        where use the algebra property of $H^{s-1}(\R)$ for $s>\frac{3}{2}$ to get: 
        \begin{align*}
           \mu  |\mathcal{Q}_b|_{H^{s-1}}
            & 
            \lesssim 
                \mu |  h^2 (\partial_x \mathrm{F}^{\frac{1}{2}}  v)^2 (\beta \partial_x b) |_{H^{s-1}}
                +     
                \mu |\partial_x \mathrm{F}^{\frac{1}{2}}(h^2 v^2 \beta \partial_x^2 b) |_{H^{s-1}}
                +
                \mu |  h v^2(\beta \partial_x^2 b)(\beta \partial_x b)|_{H^{s-1}}
                \\
                &
                \lesssim |v|_{H^s}^2. 
        \end{align*}
        Using similar estimates for the remaining terms, it is easy to deduce that
        \begin{equation*}
            |VI_1| \lesssim \ve |v|_{X^s_{\mu}}^3.
        \end{equation*}
        For $IV_2$,  we use integration by parts to make the decomposition:
        \begin{align*}
            VI_2
            & =
             -
            \ve \mu
            \big{(}
            \mathrm{J}^s(h^2 (\partial_x \mathrm{F}^{\frac{1}{2}}  v)^2 (\beta \partial_x b) ), \mathrm{J}^sv
            \big{)}_{L^2}
            +
            \frac{\ve \mu}{2}
            \big{(}
            \mathrm{J}^s(h^2 v^2 \beta \partial_x^2 b)  ,\partial_x \mathrm{F}^{\frac{1}{2}} \mathrm{J}^sv
            \big{)}_{L^2}
            \\
            & \hspace{0.5cm}
            -
            \ve \mu
            \big{(}
            \mathrm{J}^s(hv^2(\beta \partial_x^2 b)(\beta \partial_x b)), \mathrm{J}^sv
            \big{)}_{L^2}.
        \end{align*}
        Each term is estimated by Hölder's inequality, Sobolev embedding, the algebra property of $H^s(\R)$, and \eqref{Equivalence energy vs norm}, leaving us with the estimate 
        \begin{align*}
            |VI_2|  
            \lesssim N(s)E_s(\mathbf{U}).
        \end{align*}
        Lastly, $VI_3$ is estimated using the same estimates and gives
        \begin{equation*}
            |VI_3| \lesssim \beta|\partial_x b |_{H^s}|v|_{H^s}|\zeta|_{H^s}.
        \end{equation*}
        Consequently, we have the estimate
        \begin{equation*}
            |VI| 
            \lesssim 
            N(s)E_s(\mathbf{U}),
        \end{equation*}
        and thus completes the proof of Proposition \ref{Four: Energy estimate s}.
        
        \end{proof}

        \begin{remark}\label{Quasilinear}
            Under the provision of Proposition \ref{Four: Energy estimate s}, using the algebra property of $H^{s-1}(\R)$ for $s>\frac{3}{2}$, \eqref{Inverse est T}, suitable commutator estimates one can easily obtain that
            \begin{equation}\label{N1}
                | (M_1+S^{-1}M_2)(\mathbf{U})\partial_x\mathbf{U}|_{H^{s-1}} \lesssim |\mathbf{U}|_{H^{s}},
            \end{equation}
            and 
            \begin{equation}\label{N2}
                |(S^{-1}(Q +Q_b)(\mathbf{U}) |_{H^{s-1}} \lesssim |\mathbf{U}|_{H^{s}}.
            \end{equation}
        \end{remark}

    \section{Estimates on the difference of two solutions} \label{Estimates on the diff}
 
    We will now estimate the difference between two solutions of \eqref{W-G-N} given by $\bold{U}_1= (\zeta_1, v_1 )^T$ and $\bold{U}_2 =\ve (\zeta_2, v_2)^T$. For   convenience,  we define  $(\eta,w) = (\zeta_1 - \zeta_2, v_1-v_2)$. Then $\bold{W} = (\eta,w)^T$ solves 
	\begin{equation}\label{lin W}
		\partial_t \bold{W} +  (M_1 + S^{-1}M_2)(\bold{U}_1)\partial_x \bold{W}= \bold{F}, 
	\end{equation}
	with $S,M_1,M_2,Q,Q_b$ defined as  in \eqref{Matrix W-G-N}       and
        \begin{align*}
            \bold{F} 
            & =
            -  \Big[(M_1+(S^{-1}M_2))(\bold{U}_1) - (M_1+S^{-1}M_2)(\bold{U}_2)  \Big] \partial_x\bold{U}_2
            \\
            & 
            \hspace{0.5cm}
       		- \Big{[}(S^{-1}(Q+Q_b))(\bold{U}_1) - (S^{-1}(Q+Q_b))(\bold{U}_2)  \Big]  
      		\\
        	& = :\bold{F}_1 + \bold{F}_2. 
        \end{align*}
	The energy associated to \eqref{lin W} is given in terms of the symmetrizer $S(\bold{U}_1)$ and reads
	\begin{equation}\label{tilde Energy s}
		\tilde{E}_{s}(\bold{W}) 
		: =
		\big(\mathrm{J}^s \bold{W}, S(\bold{U}_1)\mathrm{J}^s \bold{W}\big)_{L^2}. 
	\end{equation}
	
	The main result of this section reads:
	
	\begin{prop}\label{Compactness L2}

            Let $s> \frac{3}{2}$, $(\mu, \ve, \beta)\in \mathcal{A}_{SW}$, and $(\zeta_1,v_1),(\zeta_2,v_2) \in C([0,T]; Y^s_{\mu}(\mathbb{R}) )$ be a solution to \eqref{Matrix W-G-N} on a time interval $[0,T]$ for some $T>0 $.  Moreover, assume $  b \in H^{s+2}(\R) $ and there exist $h_0 \in (0,1)$  such that  \begin{equation*}\label{cond. sol. diff}
			h_0 - 1 + \beta b \leq \ve \zeta_i(x,t), \quad   \forall (x,t) \in \mathbb{R} \times [0,T],
		\end{equation*}
            for $i=1,2$, and suppose also that
            \begin{equation}\label{cond. sol. diff}
                N(s) : = \ve \sup\limits_{t\in [0,T]} |(\zeta_i(t, \cdot), v_i(t, \cdot))|_{Y^s_{\mu}}  + \beta|b|_{H^{s+2}}\leq N^{\star},
            \end{equation}
            for some $N^{\star}\in \R^{+}$. Define the difference to be $\bold{W}=(\eta,w) = (\zeta_1 - \zeta_2, v_1 - v_2)$. Then, for the energy defined by \eqref{tilde Energy s}, there holds
		\begin{equation}\label{Energy 1}
			\frac{d}{dt} \tilde{E}_0(\bold{W}) \lesssim N(s) |(\eta,w)|_{Y^{0}_{\mu}}^2,
		\end{equation}
		and
		\begin{equation}\label{equiv 1}
			|(\eta,w)|^2_{Y^0_{\mu}} \lesssim \tilde{E}_0(\bold{W}) \lesssim |(\eta,w)|^2_{Y^0_{\mu}}.
		\end{equation}
		
		Furthermore, we have the following estimate at the  $Y^s_{\mu}-$ level:
		\begin{equation}\label{Energy 2}
			\frac{d}{dt} \tilde{E}_s(\bold{W}) \lesssim  |\big(\mathrm{J}^s \bold{F}, S(\bold{U}_1) \mathrm{J}^s \bold{W}\big)_{L^2}| +  N(s)  |(\eta,w)|_{Y^s_{\mu}}^2,
		\end{equation}
		and
		\begin{equation}\label{equiv 2}
			|(\eta,w)|^2_{Y^s_{\mu}} \lesssim  \tilde{E}_s(\bold{W}) \lesssim |(\eta,w)|^2_{Y^s_{\mu}}.
		\end{equation}
	\end{prop}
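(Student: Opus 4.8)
The plan is to reproduce, on the linearized system \eqref{lin W} with symmetrizer $S(\mathbf{U}_1)$ and source $\mathbf{F}$, the energy argument of Proposition \ref{Four: Energy estimate s}. Write $h_i=1+\varepsilon\zeta_i-\beta b$, $\mathscr{T}_i=\mathscr{T}[h_i,\beta b]$ (the nontrivial block of $S(\mathbf{U}_i)$) and $\mathcal{T}_i=\mathcal{T}[h_i,\beta b]$. First I would dispose of the norm equivalences \eqref{equiv 1}, \eqref{equiv 2}: by the coercivity estimate of Step~2 in the proof of Proposition \ref{Inverse of T} together with the non-cavitation hypothesis on $\zeta_1$, the bilinear form $(\mathbf{W},S(\mathbf{U}_1)\mathbf{W})_{L^2}$ is comparable to $|(\eta,w)|_{Y^0_\mu}^2$, and applying $\mathrm{J}^s$ gives the $Y^s_\mu$ statement, exactly as for \eqref{Equivalence energy vs norm}.

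For the differential inequalities I would differentiate \eqref{tilde Energy s},
\begin{equation*}
\tfrac12\tfrac{d}{dt}\tilde{E}_s(\mathbf{W})=\tfrac12\big(\mathrm{J}^s\mathbf{W},(\partial_tS(\mathbf{U}_1))\mathrm{J}^s\mathbf{W}\big)_{L^2}+\big(\mathrm{J}^s\partial_t\mathbf{W},S(\mathbf{U}_1)\mathrm{J}^s\mathbf{W}\big)_{L^2},
\end{equation*}
substitute $\partial_t\mathbf{W}=\mathbf{F}-M_1(\mathbf{U}_1)\partial_x\mathbf{W}-(S^{-1}M_2)(\mathbf{U}_1)\partial_x\mathbf{W}$ from \eqref{lin W}, and commute $\mathrm{J}^s$ so as to reach the same five objects as in Proposition \ref{Four: Energy estimate s}: the symmetrizer term $\tfrac12(\mathrm{J}^s\mathbf{W},(\partial_tS(\mathbf{U}_1))\mathrm{J}^s\mathbf{W})_{L^2}$, controlled as the term $I$ via $\partial_th_1=-\varepsilon\partial_x(h_1v_1)=O(N(s))$ in $L^\infty$ from \eqref{L infty est dt zeta}; the term $-(\mathrm{J}^sM_1(\mathbf{U}_1)\partial_x\mathbf{W},S(\mathbf{U}_1)\mathrm{J}^s\mathbf{W})_{L^2}$, handled as $II$ — after moving $\mathscr{T}_1$ onto $\mathrm{J}^sw$ by self-adjointness, expanding $\mathscr{T}_1$, integrating by parts and commuting $\mathrm{F}^{\frac12}$, carefully distributing the powers of $\mu$ through \eqref{Equivalence of norms}, \eqref{F -1/2 }, \eqref{Kill half derivative}, \eqref{D_half - fracLeib - Sob}, \eqref{Commutator: F_half} and \eqref{Commutator: J Fdx}; the term $-(M_2(\mathbf{U}_1)\partial_x\mathrm{J}^s\mathbf{W},\mathrm{J}^s\mathbf{W})_{L^2}$, handled as $III$ by integration by parts; and the commutator term $-([\mathrm{J}^s,(S^{-1}M_2)(\mathbf{U}_1)]\partial_x\mathbf{W},S(\mathbf{U}_1)\mathrm{J}^s\mathbf{W})_{L^2}$, handled as $IV$, where the only delicate block is $[\mathrm{J}^s,\mathscr{T}_1^{-1}(h_1\,\cdot\,)]\partial_x\eta$ paired against $\mathscr{T}_1\mathrm{J}^sw$, rewritten through the identity \eqref{Formula for T[Js,invT]} and bounded by \eqref{Commutator estimates}, \eqref{Commutator: J_sF_half}, \eqref{Inverse est T}, \eqref{Inverse est F T}. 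Each of these is $O(N(s))|(\eta,w)|_{Y^s_\mu}^2$; the fifth object, $(\mathrm{J}^s\mathbf{F},S(\mathbf{U}_1)\mathrm{J}^s\mathbf{W})_{L^2}$, I would leave unevaluated, which already gives \eqref{Energy 2}.

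To upgrade this to \eqref{Energy 1} I still have to estimate the source term at $s=0$. Splitting $\mathbf{F}=\mathbf{F}_1+\mathbf{F}_2$ as in Section \ref{Estimates on the diff}, each entry of $\mathbf{F}$ is, modulo coefficients built from $\mathbf{U}_1,\mathbf{U}_2,b$, linear in $\mathbf{W}$: the differences that occur are $h_1-h_2=\varepsilon\eta$, $v_1-v_2=w$, and $\mathscr{T}_1^{-1}-\mathscr{T}_2^{-1}=\mathscr{T}_1^{-1}(\mathscr{T}_2-\mathscr{T}_1)\mathscr{T}_2^{-1}$ with $\mathscr{T}_2-\mathscr{T}_1=(h_2-h_1)+\mu(h_2\mathcal{T}_2-h_1\mathcal{T}_1)=O(\varepsilon\eta)$. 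I would then estimate $(\mathbf{F},S(\mathbf{U}_1)\mathbf{W})_{L^2}$ by placing all $L^\infty$ (and higher Sobolev) norms on the solution factors — admissible since $s>\tfrac32$ makes $\mathbf{U}_1,\mathbf{U}_2\in W^{1,\infty}$, $b\in W^{2,\infty}$, and $\mathscr{T}_2^{-1}\colon H^{s-1}\to X^{s-1}_\mu\hookrightarrow L^\infty$ by \eqref{Inverse est T} — and the $L^2$, respectively $X^0_\mu$, norm on $\eta$, respectively $w$; the pairings against the $w$-component and against the nonlocal blocks $\mathscr{T}_1$, $\mathscr{T}_1^{-1}$ I would reorganize by self-adjointness of $\mathscr{T}_1$ (so that $\mathscr{T}_1\mathscr{T}_1^{-1}$ cancels and no derivative of $w$ past $\mathrm{F}^{\frac12}\partial_xw$, itself controlled by $\mu^{-1/2}|w|_{X^0_\mu}$ through \eqref{Equivalence of norms}, appears), integration by parts, and \eqref{F -1/2 }, \eqref{Kill half derivative}, \eqref{D_half - fracLeib - Sob}, absorbing the remaining $\mu^{1/4}$ factors with $\mu\le\mu_{\max}$. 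This gives $(\mathbf{F},S(\mathbf{U}_1)\mathbf{W})_{L^2}\lesssim N(s)|(\eta,w)|_{Y^0_\mu}^2$ and hence, with the $s=0$ case of the preceding step, \eqref{Energy 1}.

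The hard part throughout is the uniformity in $\mu$ of the dispersive terms: wherever expanding $\mathcal{T}$ or a resolvent difference produces a factor $\partial_x^2w$ (as in $v_1\partial_xw$ paired with $\mathscr{T}_1w$), it cannot be bounded crudely but must be routed through $\mathrm{F}^{\pm\frac12}$ and $\mathrm{D}^{\frac12}$ so that the combination $\sqrt\mu\,|\mathrm{F}^{\frac12}\partial_xw|_{L^2}\lesssim|w|_{X^0_\mu}$ absorbs the loss — the exact mechanism used for the subterms $II_2^{2,3}$ and $II_3^2$ in Proposition \ref{Four: Energy estimate s}, now compounded by the presence of the resolvent difference $\mathscr{T}_1^{-1}-\mathscr{T}_2^{-1}$ in $\mathbf{F}$. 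A secondary point is that for $s>0$ the term $\mathbf{F}_1$ genuinely costs one derivative of $\mathbf{U}_2$, which is exactly why \eqref{Energy 2} keeps the source unevaluated; that loss will be recovered later by running the estimate against a mollified second solution in a Bona--Smith argument.
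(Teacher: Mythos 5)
Your proposal is correct and follows essentially the same route as the paper: the equivalences come from the coercivity of $S(\mathbf{U}_1)$ (Step~2 of Proposition~\ref{Inverse of T}), the $\mathcal{I}$--$\mathcal{III}$ terms are handled exactly as $I$--$III$ of Proposition~\ref{Four: Energy estimate s}, the commutator term and the source are left explicit for general $s$ (giving \eqref{Energy 2} and the derivative loss noted in Remark~\ref{divergence est}), and \eqref{Energy 1} follows by evaluating the source at $s=0$ using the $\mu$-uniform machinery of Section~\ref{PropF}/\ref{PropT}. The only cosmetic difference is that you invoke the standard resolvent identity $\mathscr{T}_1^{-1}-\mathscr{T}_2^{-1}=\mathscr{T}_1^{-1}(\mathscr{T}_2-\mathscr{T}_1)\mathscr{T}_2^{-1}$ where the paper uses the equivalent rearranged form \eqref{id on diff op T}, which is marginally more convenient since it already places $\mathscr{T}_1$ on the left so it cancels directly against the $\mathscr{T}_1^{-1}$ produced by self-adjointness when pairing with $\mathscr{T}_1 w$; both lead to the same bounds.
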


 \begin{proof}
     We note that \eqref{equiv 1}, \eqref{Energy 2} and \eqref{equiv 2}  follow by the same arguments as in the proof of Proposition \ref{Four: Energy estimate s} and is therefore omitted. 

    To prove \eqref{Energy 1}, we use \eqref{lin W},  the self-adjointness $S(\mathbf{U})$, and Proposition \ref{Inverse of T} to obtain 
    \begin{align*}
        \frac{1}{2} \frac{d}{dt} \tilde{E}_0(\mathbf{W})
        & 
        =
        \frac{1}{2} 
        \big{(}
            \mathbf{W}, (\partial_t S(\mathbf{U}_1))\mathbf{W}
        \big{)}_{L^2}
         -
         \big{(}
        M_1(\mathbf{U}_1) \partial_x \mathbf{W}, S(\mathbf{U}_1)\mathbf{W}
        \big{)}_{L^2}  
        \\
        & 
        \hspace{0.5cm}
        -
         \big{(}
        M_2(\mathbf{U}_1) \partial_x \mathbf{W},  \mathbf{W}
        \big{)}_{L^2} 
        +
        \big{(} \mathbf{F}_1, S(\mathbf{U}_1) \mathbf{W} \big{)}_{L^2}
        +
         \big{(} \mathbf{F}_2, S(\mathbf{U}_1) \mathbf{W} \big{)}_{L^2}
        \\
        &
        =: \mathcal{I} + \mathcal{II} + \mathcal{III} + \mathcal{IV} + \mathcal{V}.
    \end{align*}

     \noindent
     \underline{Control of $\mathcal{I}$.} The estimate of $\mathcal{I}$ is a direct consequence of Hölder's inequality, \eqref{cond. sol. diff}, \eqref{L infty est dt zeta}, and \eqref{cond. sol. diff}:
     \begin{align*}
         |\mathcal{I}|
         \lesssim 
         N(s) |w|_{X^0_{\mu}}^2, 
     \end{align*}
     for $s>\frac{3}{2}$.\\

	\noindent
	\underline{Control of $\mathcal{II}$.}  By definition of $\mathcal{II}$, after performing an integration by parts, yields
	\begin{align*}
		\mathcal{II}
		& =
		\frac{\ve}{2} \big{(}( \partial_x(v_1h_1))w, w \big{)}_{L^2} 
		+
		\frac{\mu\ve}{3} \big{(} \partial_x\mathrm{F}^{\frac{1}{2}}(v_1\partial_xw), h_1^3 \partial_x\mathrm{F}^{\frac{1}{2}}w \big{)}_{L^2}
		\\
            & 
            \hspace{0.5cm}
                -
			\frac{\mu\ve\beta}{2} \big{(} v_1\partial_xw, \partial_x\mathrm{F}^{\frac{1}{2}}(h^2_1(\partial_x b) w) \big{)}_{L^2}
                 +
			\frac{\mu\ve\beta}{2} \big{(}  v_1\partial_xw, h^2_1 (\partial_x b)  \partial_x\mathrm{F}^{\frac{1}{2}}w \big{)}_{L^2}
			\\
                & 
                \hspace{0.5cm}
                +
                \frac{\mu\ve\beta}{2} \big{(} v_1\partial_xw, h_1 (\beta \partial_x b)^2w \big{)}_{L^2}
                \\		
                & = :
		      \mathcal{II}_1 + \mathcal{II}_2       
                +
                \mathcal{II}_3 + \mathcal{II}_4
                +
                \mathcal{II}_5.
	\end{align*}
	For $\mathcal{II}_1$ and $\mathcal{II}_5$, we simply use Hölders inequality and Sobolev embedding to obtain
	\begin{equation*}
		|\mathcal{II}_1| + |\mathcal{II}_5| \lesssim \ve (1+|h_1-1|_{H^s} + (1+|h_1-1|_{H^s})|b|^2_{H^{s+1}})|v_1|_{H^s}|w|_{L^2}^2.
	\end{equation*}
	For $\mathcal{II}_2$, we observe that is similar to $II_2^2$ in the proof of Proposition \ref{Four: Energy estimate s} where $w$ plays the role of $\mathrm{J}^sv$. Then reapplying the same estimates yields:
	\begin{equation*}
		|\mathcal{II}_2| \lesssim \ve (1+|h_1^3-1|_{H^s})|w|_{X^0_{\mu}}^3.
	\end{equation*}
        For $\mathcal{II}_3$, we integrate by parts to make the decomposition
        \begin{align*}
            \mathcal{II}_3
            & =
            \frac{\mu\ve\beta}{2} \big{(} \mathrm{F}^{\frac{1}{2}}((\partial_x v_1)\partial_xw), h^2_1(\partial_x b) w \big{)}_{L^2}
           +
           \frac{\mu\ve\beta}{2} \big{(} \mathrm{F}^{\frac{1}{2}}(v_1\partial_x^2w), h^2_1(\partial_x b) w \big{)}_{L^2}
           \\
           & = :
           \mathcal{II}_3^1 + \mathcal{II}_3^2.
        \end{align*}
        Here $\mathcal{II}_3^1$ is similar to $II_3^2$ in the proof of Proposition \ref{Four: Energy estimate s} and applying the estimates yields,
        \begin{align*}
            |\mathcal{II}_3^1|\lesssim \ve (1+ |h_1^2-1|_{H^s})|b|_{H^{s+1}}|v_1|_{H^s}|w|_{L^2}|w|_{X^0_{\mu}}.
        \end{align*}
        On the other hand, $\mathcal{II}_3^2$ is similar to $II_3^3$ and we observe that
        \begin{align*}
            \mathcal{II}_3^2
            & = 
            \frac{\mu\ve\beta}{2} \big{(} [\mathrm{F}^{\frac{1}{2}},v_1]\partial_x^2w, h^2_1(\partial_x b) w \big{)}_{L^2}
            -
            \frac{\mu\ve\beta}{2} \big{(} v_1\mathrm{F}^{\frac{1}{2}}\partial_xw, h^2_1(\partial_x b) \partial_x w \big{)}_{L^2}
            \\
            &
            \hspace{0.5cm}
            -
            \frac{\mu\ve\beta}{2} \big{(} v_1\mathrm{F}^{\frac{1}{2}}\partial_xw, \big{(}\partial_x(h^2_1(\partial_x b))\big{)}w \big{)}_{L^2}
            \\
            & =
            \mathcal{II}_3^{2,1} + \mathcal{II}_3^{2,2} + \mathcal{II}_3^{2,3}.
        \end{align*}
        Then we observe that $\mathcal{II}_3^{2,2} = - \mathcal{II}_4$, while for $\mathcal{II}_3^{2,1}$ and $\mathcal{II}_3^{2,3}$ we apply Hölder's inequality, \eqref{Commutator: F_half}, and Sobolev embedding to obtain the bound
        \begin{align*}
            |\mathcal{II}_3^{2,1}| + |\mathcal{II}_3^{2,3}| \lesssim \ve |v_1|_{H^s}(1+|h_1^2-1|_{H^s})|b|_{H^{s+1}}|w|_{L^2}|w|_{X^0_{\mu}}.
        \end{align*}
        Gathering these estimates and using \eqref{cond. sol. diff}  yields
        \begin{align*}
            |\mathcal{II}| \lesssim N(s)|w|_{X^0_{\mu}}^2.
        \end{align*}\\
        
	\noindent
	\underline{Control of $\mathcal{III}$.} By  definition of $\mathcal{III}$ we must estimate the terms:
	\begin{align*}
		\mathcal{III}
		& =
		-
		\ve \big{(}
		v_1 \partial_x \eta , \eta 
		\big{)}_{L^2}
		-
		\big{(}h_1 \partial_x w, \eta 
		\big{)}_{L^2}
		-
		\big{(}
		h_1 \partial_x \eta, w
		\big{)}_{L^2}
		\\
		& 
		= :
		\mathcal{III}_1 + \mathcal{III}_2 + \mathcal{III}_3.
	\end{align*}
	Starting with $\mathcal{III}_1$, we simply integrate by parts and use Hölder's inequality and Sobolev embedding to deduce
	\begin{align*}
		|\mathcal{III}_1| \lesssim \ve |v_1|_{H^s} |\eta|_{L^2}^2.
	\end{align*}
	Similarly, for $\mathcal{III}_2 + \mathcal{III}_3$ we use integration by parts, the Sobolev embedding, and \eqref{cond. sol. diff} to get that
	%
	%
	\begin{align*}
		|\mathcal{III}_2 + \mathcal{III}_3|& \lesssim  |\partial_xh_1|_{L^{\infty}}|w|_{L^2}|\eta|_{L^2}
		\\
		& \lesssim N(s)|w|_{L^2}|\eta|_{L^2}
		.
	\end{align*}
        In conclusion, we obtain the bound
        \begin{align*}
		|\mathcal{III}|\lesssim N(s)|(\eta,w)|_{Y^0_{\mu}}^2.
	\end{align*}\\

	\noindent
	\underline{Control of $\mathcal{IV}$.} First define the notation
	\begin{equation*}
		 \mathscr{T}_i = \mathscr{T}[h_i, \beta b],
	\end{equation*}
	for $i=1,2$ and consider the terms
	\begin{align*}
		\mathcal{IV}
		& = 
		-\ve \big{(} w \partial_x\zeta_2, \eta\big{)}_{L^2}
		-
		\ve \big{(} \eta \partial_xv_2, \eta\big{)}_{L^2}
		-
		 \big{(}(\mathscr{T}_1^{-1}(h_1\cdot) - \mathscr{T}_2^{-1}(h_2\cdot))\partial_x\zeta_2 , \mathscr{T}_1 w\big{)}_{L^2}
            \\
            & 
            \hspace{0.5cm}
            -
		\ve \big{(}w \partial_xv_2, \mathscr{T}_1 w\big{)}_{L^2}
		\\
		& = :
		\mathcal{IV}_1 +  \mathcal{IV}_2 +  \mathcal{IV}_3 + \mathcal{IV}_4.
	\end{align*}
	For the first two terms, we use Hölder's inequality and the Sobolev embedding to deduce the bound:
	\begin{align*}
		|\mathcal{IV}_1| + |\mathcal{IV}_2| \leq \ve | \zeta_2 |_{H^s} |w|_{L^2} |\eta |_{L^2} + \ve |v_2 |_{H^s} |\eta |_{L^2}^2,
	\end{align*}
	for $s>\frac{3}{2}$.  Next, we make the observation
	\begin{align}\label{id on diff op T}
		\mathscr{T}_1(\mathscr{T}_1^{-1} f_1 - \mathscr{T}_2^{-1}f_2)
		= (f_1 -f_2) - (\mathscr{T}_1-\mathscr{T}_2)\mathscr{T}_2^{-1}f_2.
	\end{align}
	Using \eqref{id on diff op T} and invertability of $\mathscr{T}_i$  we observe that
	\begin{align*}
		\mathcal{IV}_3 
		& =
		-
		\ve \big{(}\eta \partial_x \zeta_2, w\big{)}_{L^2} 
		+
		\frac{\mu \ve }{3}
		\big{(}\mathrm{F}^{\frac{1}{2}}\partial_x (\eta(h_1^2 + h_1h_2 + h_2^2)\partial_x \mathrm{F}^{\frac{1}{2}}\mathscr{T}_2^{-1}(h_2 \partial_x\zeta_2)), w\big{)}_{L^2}
		\\
		&
		\hspace{0.5cm}
		-
		\frac{\mu \ve }{2} 
		\big{(}
		\partial_x \mathrm{F}^{\frac{1}{2}} (\eta (h_1 + h_2) (\beta \partial_x b) \mathscr{T}_2^{-1}(h_2 \partial_x \zeta_2)), w
		\big{)}_{L^2}
		\\
		&
		\hspace{0.5cm}
		+
		\frac{\mu \ve }{2} 
		\big{(}
		\eta (h_1 + h_2) (\beta \partial_x b) \partial_x \mathrm{F}^{\frac{1}{2}}\mathscr{T}_2^{-1}(h_2 \partial_x \zeta_2), w
		\big{)}_{L^2}
		\\
		& 
		\hspace{0.5cm}
		-
		\mu \ve 
		\big{(}
		\eta  (\beta \partial_x b)^2\mathscr{T}_2^{-1}( h_2 \partial_x \zeta_2), w
		\big{)}_{L^2}
		\\
		& =:
		\mathcal{IV}_3^{1} + \mathcal{IV}_3^{2} + \mathcal{IV}_3^{3} + \mathcal{IV}_3^{4} + \mathcal{IV}_3^{5},
	\end{align*}
	where $\mathcal{IV}_3^{1} = \mathcal{IV}_1$ which is already treated. While for the second term, we use integration by parts, Hölder's inequality, Sobolev embedding, \eqref{cond. sol. diff}, and \eqref{Inverse est F T} to obtain
	\begin{align*}
		|\mathcal{IV}_3^{2}|
		&
		\leq 
		\ve | \eta |_{L^2} |(h_1^2 + h_1h_2 + h_2^2)|_{L^{\infty}} | \partial_x \mathrm{F}^{\frac{1}{2}}\mathscr{T}_2^{-1}(h_2 \partial_x\zeta_2))|_{L^{\infty}} |w|_{X^0_{\mu}}
		\\
		& 
		\lesssim 
		N(s) | \eta |_{L^2}|w|_{X^0_{\mu}}
	\end{align*}
	for $s> \frac{3}{2}$.  For $II_3^{3}$ we apply the same estimates together with \eqref{Inverse est T} to deduce
	\begin{align*}
		|\mathcal{IV}_3^{3}|
		& \lesssim 
		\ve |(h_1 + h_2) |_{L^{\infty}} |\beta \partial_x b |_{L^{\infty}} |h_2|_{L^{\infty}} | \partial_x \zeta_2 |_{L^{\infty}} |\eta|_{L^2} |w |_{X^0_{\mu}}
		\\
		& \lesssim
		N(s) | \eta |_{L^2} |w|_{X^0_{\mu}}.
	\end{align*}
	\noindent
	Next, we see that $\mathcal{IV}_3^{4}$ is estimated similarly to $\mathcal{IV}_3^{2}$ and we get that
	\begin{align*}
		|\mathcal{IV}_3^{4}|
		& 
		\lesssim N(s)|\eta|_{L^2} |w|_{X^0_{\mu}}.
	\end{align*}
	The  part $\mathcal{IV}_3^{5}$ is easily treated with Hölder's inequality and Sobolev embedding. Thus, gathering these estimates and applying \eqref{equiv 1} yields,
	\begin{align*}
		|\mathcal{IV}_3| \lesssim N(s) |(\eta,w)|_{Y^0_{\mu}}^2.
	\end{align*}
        Lastly, we deal with $\mathcal{IV}_4$:
        \begin{align*}
            \mathcal{IV}_4
            & =
            -
            \ve \big{(}w \partial_xv_2, h_1 w\big{)}_{L^2}
            +
            \frac{\mu \ve }{3}
            \big{(}w \partial_xv_2, \partial_x \mathrm{F}^{\frac{1}{2}}(h_1^3 \mathrm{F}^{\frac{1}{2}} \partial_x  w)\big{)}_{L^2}
            \\
            & 
            \hspace{0.5cm}
            -
            \frac{\mu \ve }{2}
            \big{(}w \partial_xv_2, \partial_x \mathrm{F}^{\frac{1}{2}}(h_1^2 (\beta \partial_x b) w)\big{)}_{L^2}
            +
            \frac{\mu \ve }{2}
            \big{(}w \partial_xv_2, h_1^2 (\beta \partial_x b) \partial_x \mathrm{F}^{\frac{1}{2}}w\big{)}_{L^2}
            \\
            &
            \hspace{0.5cm}
            -
            \mu \ve
            \big{(}w \partial_xv_2, h_1(\beta \partial_x b)^2w\big{)}_{L^2}
            \\
            & = :
            \mathcal{IV}_4^1 + \mathcal{IV}_4^2 + \mathcal{IV}_4^3 + \mathcal{IV}_4^4. 
        \end{align*}
        Each term is treated similarly, and we only give the details for $\mathcal{IV}_4^2$ since it is the term with the least margin. In particular, using integration by parts,  Hölder's inequality, Sobolev embedding, \eqref{cond. sol. diff}, 
        \begin{align*}
            |\mathcal{IV}_4^2|
            & \lesssim \ve \sqrt{\mu} (1+ |h_1^3 - 1|_{H^s})|\partial_x \mathrm{F}^{\frac{1}{2}}(w \partial_x v_2)|_{L^2} |w|_{X^{0}_{\mu}}
            \\
            & 
            \lesssim  N(s)\mu^{\frac{1}{4}}|w \partial_x v_2|_{H^{\frac{1}{2}}}|w|_{X^{0}_{\mu}}.
        \end{align*}
        Then we use Hölder's inequality, Sobolev embedding, and \eqref{D_half - fracLeib - Sob} to deduce that
        \begin{align*}
            \mu^{\frac{1}{4}}|w \partial_x v_2|_{H^{\frac{1}{2}}}
            & \lesssim
            \mu^{\frac{1}{4}}(|\partial_x v_2|_{L^{\infty}}|w|_{L^2} +|D^{\frac{1}{2}}(w\partial_x v_2)|_{L^2} )
            \\
            & 
            \lesssim |v_2|_{H^s}|w|_{L^2} + \mu^{\frac{1}{4}}|v_2|_{H^{r+1}}|w|_{H^{\frac{1}{2}}},
        \end{align*}
	for any $r>\frac{1}{2}$. Now choose $r$ such that $s>r+1>\frac{3}{2}$ allowing us to conclude that
        \begin{align*}
            \mu^{\frac{1}{4}}|w \partial_x v_2|_{H^{\frac{1}{2}}} \lesssim |v_2|_{H^s}|w|_{X^0_{\mu}},
        \end{align*}
        and from which we obtain:
        \begin{align*}
            |\mathcal{IV}_4^2| \lesssim N(s)|w|_{X^0_{\mu}}^2.
        \end{align*}
        To summarize this part, we can use \eqref{cond. sol. diff} to obtain the estimate
        \begin{align*}
            |\mathcal{IV}| \lesssim  N(s)|(\eta,w)|_{Y^0_{\mu}}^2.
        \end{align*}\\

	\noindent
	\underline{Control of $\mathcal{V}$.} Define the notation 
	\begin{equation*}
		\mathcal{Q}_i = \mathcal{Q}[h_i, v_i], \quad \mathcal{Q}_{b,i}= \mathcal{Q}_b[h_i, \beta b, v_i],
	\end{equation*}
	with $i = 1,2$, and using the identity \eqref{id on diff op T}, then  we obtain the following terms:
	\begin{align*}
		\mathcal{V} 
		& =
		\beta \big{(} \partial_xb w, \eta \big{)}_{L^2} 
		-
		\mu \ve \big{(} h_1\mathcal{Q}_1 - h_2 \mathcal{Q}_2  , w \big{)}_{L^2 } 
		+
		\mu \ve \big{(}  (\mathscr{T}_1-\mathscr{T}_2)\mathscr{T}_2^{-1}(h_2\mathcal{Q}_2), w \big{)}_{L^2}
		\\
		& 
		\hspace{0.5cm}
		-
		\mu \ve \big{(} h_1\mathcal{Q}_{b,1} - h_2 \mathcal{Q}_{b,2}  , w \big{)}_{L^2 } 
            +
		\mu \ve \big{(}  (\mathscr{T}_1-\mathscr{T}_2)\mathscr{T}_2^{-1}(h_2\mathcal{Q}_{b,2}), w \big{)}_{L^2}
		\\
		& 
		= :
		\mathcal{V}_1 + \mathcal{V}_2 + \mathcal{V}_3 + \mathcal{V}_4 + \mathcal{V}_5.   
	\end{align*}
	The estimate of $\mathcal{V} _1$ follows directly by Hölder's inequality:
	\begin{equation*}
		|\mathcal{V}_1| \lesssim \beta |\partial_xb|_{L^{\infty}}|w|_{L^2}|\eta|_{L^2}.
	\end{equation*}
	For $\mathcal{V}_2$, we use the definition of $Q_i$ and then integration by parts to make the following decomposition
	\begin{align*}
		\mathcal{V}_2 
		& = 
		\frac{2\mu \ve }{3}
		\big{(} \big{(}h_1^3(\partial_x\mathrm{F}^{\frac{1}{2}} v_1)^2 - h_2^3(\partial_x\mathrm{F}^{\frac{1}{2}} v_2)^2\big{)} , \partial_x \mathrm{F}^{\frac{1}{2}}w \big{)}_{L^2 }  
		\\
		& = 
		\frac{2\mu \ve }{3}
		\big{(} \big{(}\eta(h_1+h_2) \big{)}(\partial_x\mathrm{F}^{\frac{1}{2}} v_1)^2 , \partial_x \mathrm{F}^{\frac{1}{2}}w \big{)}_{L^2 } 
		\\
		& 
		\hspace{0.5cm} 
		+
		\frac{2\mu \ve }{3}
		\big{(} h_2^3\big{(}\partial_x\mathrm{F}^{\frac{1}{2}} (v_1+v_2)\big{)}(\partial_x\mathrm{F}^{\frac{1}{2}} w,)  , \partial_x \mathrm{F}^{\frac{1}{2}}w \big{)}_{L^2 }.
	\end{align*} 
	Now, estimate each term by Hölder's inequality and Sobolev embedding to obtain that
	\begin{align*}
		|\mathcal{V}_2 |
		& 
		\lesssim \mu \ve\big{(} |h_1+h_2|_{L^{\infty}} |\partial_x\mathrm{F}^{\frac{1}{2}} v_1 |_{L^{\infty}}^2 |\eta|_{L^2} |\partial_x \mathrm{F}^{\frac{1}{2}}w |_{L^2}
		+
		|h_2^3|_{L^{\infty}}|\partial_x\mathrm{F}^{\frac{1}{2}} (v_1+v_2)|_{L^{\infty}} |\partial_x \mathrm{F}^{\frac{1}{2}}w|^2_{L^2} \big{)}
		\\
		& 
		\lesssim 
		\ve \max_{i = 1,2}    
		\big{(}
		(1+|h_i-1|_{{H^s}} )| v_1|_{H^s}^2
		 |\eta|_{L^2} |w |_{X^0_{\mu}}
		+
		(1+|h_2^3-1|_{{H^s}} )| v_i |_{H^s}
		 |w|^2_{X^0_{\mu}} \big{)}.
	\end{align*}
	Then conclude this estimate by applying \eqref{cond. sol. diff}: 
	\begin{equation*}
		|\mathcal{V}_2 | \lesssim N(s) |(\eta,w)|_{Y^0_{\mu}}^2.
	\end{equation*}
	For $\mathcal{V}_3$, we use the same decomposition as for $\mathcal{IV}_3$ and find that
	\begin{align*}
		\mathcal{V}_3
		& = 
		\frac{\mu^2 \ve  }{3}
		\big{(}\mathrm{F}^{\frac{1}{2}}\partial_x \big{(}\eta(h_1^2 + h_1h_2 + h_2^2)\partial_x \mathrm{F}^{\frac{1}{2}}\mathscr{T}_2^{-1}(h_2\mathcal{Q}_2)\big{)}, w\big{)}_{L^2}
		\\
		&
		\hspace{0.5cm}
		-
		\frac{\mu^2 \ve  }{2} 
		\big{(}
		\partial_x \mathrm{F}^{\frac{1}{2}} \big{(}\eta (h_1 + h_2) (\beta \partial_x b) \mathscr{T}_2^{-1}(h_2\mathcal{Q}_2)\big{)}, w
		\big{)}_{L^2}
		\\
		& 
		\hspace{0.5cm}
		+
		\frac{\mu^2 \ve  }{2} 
		\big{(}
		\eta (h_1 + h_2) (\beta \partial_x b) \partial_x \mathrm{F}^{\frac{1}{2}}\mathscr{T}_2^{-1}(h_2\mathcal{Q}_2), w
		\big{)}_{L^2}
		-
		\mu^2 \ve 
		\big{(}
		\eta  (\beta \partial_x b)^2 \mathscr{T}_2^{-1}(h_2\mathcal{Q}_2), w
		\big{)}_{L^2}
		\\
		& = :
		\mathcal{V}_3^1 + \mathcal{V}_3^2 + \mathcal{V}_3^3 + \mathcal{V}_3^4.
	\end{align*}
	Each term is treated similarly, but the term with the least margin is $\mathcal{V}_3^1$. In fact, we use integration by parts, Hölder's inequality, the Sobolev embedding $H^{s-1}(\R) \hookrightarrow L^{\infty}(\R)$, \eqref{cond. sol. diff}, and \eqref{Inverse est F T} to get the following estimate
	\begin{align*}
		|\mathcal{V}_3^1|
		& \lesssim 
		\mu^2 \ve   
		|\eta|_{L^2}|h_1^2 + h_1h_2 + h_2^2|_{L^{\infty}}| \mathrm{F}^{\frac{1}{2}}\mathscr{T}_2^{-1}(h_2\mathcal{Q}_2)|_{H^{s}} |\mathrm{F}^{\frac{1}{2}}\partial_x w|_{L^{2}}
		\\
		& 
		\lesssim  \mu \ve  | h_2\mathcal{Q}_2|_{H^{s-1}}	|\eta|_{L^2}|w|_{X^0_{\mu}}.
	\end{align*}
	Then using \eqref{Kill half derivative} and the algebra property of $H^{s}(\R)$ and the boundedness of $\mathrm{F}^{\frac{1}{2}}$, we observe that
	\begin{align*}
		| h_2\mathcal{Q}_2|_{H^{s-1}}& \lesssim  | \mathrm{F}^{\frac{1}{2}} \partial_x (h_2^3(\mathrm{F}^{\frac{1}{2}} \partial_x v_2)^2)|_{H^{s-1}} \lesssim (1+|h_2^3-1|_{H^s})|\mathrm{F}^{\frac{1}{2}} \partial_x v_2|_{H^{s}}^2.
	\end{align*}
	Consequently, we  may gather these estimates to deduce the bound
	\begin{align*}
		|\mathcal{V}_3^1| +...+ |\mathcal{V}_3^4| \lesssim N(s)|(\eta,w)|_{X^0_{\mu}}^2,
	\end{align*}
	where $	|\mathcal{V}_3^2| +...+ |\mathcal{V}_3^4|$ are easier versions of  $\mathcal{V}_3^2$. 
	
	To conclude we must estimate $\mathcal{V}_4$ and $\mathcal{V}_5$. However, since $\mathcal{Q}_b$ contains fewer derivatives than $\mathcal{Q}$, these terms could be considered to be of lower order. In fact, $\mathcal{V}_4$ is estimated by a similar decomposition to the one of $\mathcal{V}_2$, while $\mathcal{V}_5$ is a just a simpler version of $\mathcal{V}_3$. We may therefore conclude that
	\begin{equation*}
			|\mathcal{V}| \lesssim N(s) |(\eta,w)|_{Y^0_{\mu}}^2.
	\end{equation*}
	 Gathering all these estimates, we obtain \eqref{Energy 1}, and the proof of Proposition \ref{Compactness L2} is complete.
 \end{proof}

\begin{remark}\label{divergence est}
    From the proof of the proposition, it is easy to make the rough estimate of the source term in \eqref{Energy 2}:
    \begin{equation*}
        |\big(\mathrm{J}^s \bold{F}, S(\bold{U}_1) \mathrm{J}^s \bold{W}\big)_{L^2}|\lesssim N(s+1)\: |(\eta,w)|_{Y^{s-1}_{\mu}} \Big{(}\tilde{E}_s(\mathbf{W}))^{\frac{1}{2}}
        + N(s) \tilde{E}_s(\mathbf{W}) \Big{)},
    \end{equation*}
    combining the estimates used below (see control of $II$) and using the product estimate for $H^s(\R)$. The estimate \eqref{Energy 2} serves two purposes. One is to prove the full justification of \eqref{W-G-N} as a water waves model, where we allow for a loss of derivatives (see Section \ref{Justification}).

    On the other hand, to get the continuity of the flow, one needs to compensate the norms on the right of \eqref{Energy 2}: 
     \begin{align*}
        \max \limits_{i = 1,2} |(\zeta_i,v_i)|_{Y^{s+1}_{\mu}}  |(\eta,w)|_{Y^{s-1}_{\mu}},
    \end{align*}
    and is done by regularising the initial data and a Bona-Smith argument \cite{BonaSmith75}. 
\end{remark}

    \section{Long time Well-posedness of \eqref{W-G-N}}\label{Proof - WP}

 	For the proof of Theorem \ref{W-P W-G-N} we will use the parabolic regularisation method for the existence of solutions and a Bona–Smith regularisation 
	argument \cite{BonaSmith75} to prove the continuous dependence of the solutions with respect to the initial data. This method is classical in the case of quasilinear equations and we will only outline the steps that are unique to system \eqref{W-G-N} and needed to run the argument. In particular, one can read \cite{DucheneMMWW21} for a similar argument in the case of the classical Green-Naghdi system. Lastly, the reader might also find it useful to read the  detailed proof, using these methods, in the case of the Benjamin-Ono equation in \cite{LinaresPonce15}, and likewise  in the case of  Whitham-Boussinesq systems demonstrated in \cite{Paulsen22}. \\


	\begin{proof}
    \noindent
    \underline{Step 1:} \textit{Existence of solutions for a regularised system.} Let $s>\frac{3}{2}$, $\alpha \in (1,\frac{3}{2}]$ and take $\nu>0$ small. Moreover let $\mathbf{U}_0  = (\zeta_0,v_0)^T \in Y^s_{\mu}(\R)$, $ b \in H^{s+2}(\R)$ satisfying \eqref{nonCavitation} and define $T_\nu >0$ such that
    \begin{align}\label{Time condition 1}
        T_{\nu} \searrow 0\quad \text{as} \quad  \nu \searrow 0, \quad \text{and} \quad  T_{\nu} = T_{\nu}(|(\zeta_0,v_0)|_{Y^{s}_{\mu}}),
    \end{align}
    with the property that  
    \begin{equation}\label{Time condition 2}
        \text{if} \quad a< b \quad \text{then} \quad T_{\nu}(a) > T_{\nu}(b).
    \end{equation}
    Then we claim there is a unique solution $\mathbf{U}^{\nu}  = (\zeta^{\nu},v^{\nu})^T \in C([0,T_{\nu}];Y^s_{\mu}(\R))$ associated to $\mathbf{U}_0$ that satisfy the regularised version of \eqref{Matrix W-G-N} given by,
    \begin{align}\label{system parabolic reg}
        S(\mathbf{U}^{\nu})(\partial_t \mathbf{U}^{\nu} +M_1(\mathbf{U}^{\nu}) \partial_x \mathbf{U}^{\nu})
        +
        M_2(\mathbf{U}^{\nu}) \partial_x\mathbf{U}^{\nu} 
        + 
        Q(\mathbf{U}^{\nu})+Q_b(\mathbf{U}^{\nu})
        =
        -\nu S(\mathbf{U}^{\nu}) \mathrm{J}^{\alpha}\mathbf{U}^{\nu}.
    \end{align}

    To prove the claim, we first suppose the non-cavitation condition for $\mathbf{U}^{\nu}$ and use Proposition \ref{Inverse of T} to apply the inverse of $\mathscr{T}[h,\beta b]$ on the second equation in \eqref{system parabolic reg}.  Then we study the Duhamel formulation:
    \begin{align*}
        \mathbf{U}^{\nu}(t) = \mathrm{e}^{-\nu \langle \mathrm{D} \rangle^{\alpha}t}\mathbf{U}_0 + \int_0^t \mathrm{e}^{-\nu \langle \mathrm{D} \rangle^{\alpha}(t-s)} \mathcal{N}(\mathbf{U}^{\nu})(s)\: \mathrm{d}s,
    \end{align*}
    where $\mathrm{e}^{-\nu \langle \mathrm{D} \rangle^{\alpha}t}$ is the Fourier multiplier defined by
    \begin{align*}
        \mathcal{F}(\mathrm{e}^{-\nu \langle \mathrm{D} \rangle^{\alpha}t}f)(\xi) = e^{-\nu \langle \xi \rangle^{\alpha}t}\hat{f}(\xi),
    \end{align*}
    and with
    \begin{align*}
        \mathcal{N}(\mathbf{U}^{\nu}) =
         (M_1 + S^{-1} M_2)(\mathbf{U}^{\nu}) \partial_x\mathbf{U}^{\nu} 
           + 
            (S^{-1}(Q+Q_b))(\mathbf{U}^{\nu}).
    \end{align*}
    In particular, we prove that the application
    \begin{align}\label{Map fixed pt}
        \Phi : \mathbf{U}^{\nu} \mapsto \mathrm{e}^{-\nu \langle \mathrm{D} \rangle^{\alpha}t}\mathbf{U}_0 + \int_0^t \mathrm{e}^{-\nu \langle \mathrm{D} \rangle^{\alpha}(t-s)} \mathcal{N}(\mathbf{U}^{\nu})(s)\: \mathrm{d}s,
    \end{align}
    is a contraction map on the subspace 
    \begin{align*}
        B(R,h_0)  = \Big{\{}\mathbf{U} = (\zeta, v) \in C([0,T];Y^{s}_{\mu}(\R)) \: : \: |(\zeta,v)|_{Y^{s}_{\mu}}<R,\: \: \inf \limits_{t \in (0,T)}(1+\ve \zeta^{\nu}  -\beta b) \geq h_0\Big{\}}, 
    \end{align*}
    with $R>0$ to be determined. First, observe by Plancherel's identity and then splitting in high and low frequencies that
    \begin{align*}
        |\mathrm{e}^{-\nu \langle \mathrm{D} \rangle^{\alpha}t} \mathbf{U} |_{H^s} 
        & 
        \lesssim  |\mathbf{U}|_{L^2} + (\nu t)^{-\frac{1}{\alpha}}| (\nu t)^{\frac{1}{\alpha}} |\xi |e^{-((\nu t)^{\frac{1}{\alpha}}|\xi |)^{\alpha}} \hat{\mathbf{U}}|_{H^{s-1}}
        \\
        &
        \lesssim (1+(\nu t)^{-\frac{1}{\alpha}} ) |\mathbf{U}|_{H^{s-1}},
    \end{align*}
    and trivially that
    \begin{align*}
        |\mathrm{e}^{-\nu \langle \mathrm{D} \rangle^{\alpha}t} \mathbf{U} |_{H^s} \leq  |\mathbf{U}|_{H^{s}}.
    \end{align*}
    Thus, as a consequence of these estimates and Remark \ref{Quasilinear}  we obtain that
    \begin{align*}
       \sup\limits_{t \in [0,T]} | \Phi(\mathbf{U^{\nu}})(t) |_{H^s} \leq c |\mathbf{U}_0|_{H^s} + cT^{1-\frac{1}{\alpha}}\nu^{-\frac{1}{\alpha}}|\mathbf{U}|_{H^s}.
    \end{align*}
    Now, choose $R$ to be
    \begin{align*}
        R = 2c |(\zeta_0,v_0)|_{Y^s_{\mu}}.
    \end{align*}
    Additionally, since $1-\frac{1}{\alpha}>0$ we may take $T$ positive depending on $\nu$  and $R$ on the form
    \begin{align*}
        T^{1-\frac{1}{\alpha}} \sim  \frac{\nu^{\frac{1}{\alpha}}}{R},
    \end{align*}
    small enough, and such that
    \begin{align*}
        1+\ve \zeta^{\nu}(x,t) - \beta b(x) & = h_0 + \int_{0}^t \partial_t\zeta^{\nu} (x,s) \: \mathrm{d}s \geq 
        h_0 - cT(R+R^2)\geq \frac{h_0}{2},
    \end{align*}
    using the Fundamental theorem of calculus and \eqref{L infty est dt zeta}. Then the map \eqref{Map fixed pt} is  well-defined on $B(R,\frac{h_0}{2})$, and the contraction estimate is obtained similarly after some straightforward algebraic manipulations. We may therefore conclude this step by the Banach fixed point Theorem. \\

    \begin{remark}[The blow-up alternative] 
    If we define the maximal time of existence $T_{\text{Max}}^{\nu}$ to be 
    \begin{align*}
        T_{\text{Max}}^{\nu} = \sup \big{\{}T_{\nu}>0 \: : \: \exists ! \quad \mathbf{U}^{\nu} \quad \text{solution of} \quad \eqref{system parabolic reg} \quad in \quad C([0,T_{\nu}]; Y^s_{\mu}(\R)) \big{\}},
    \end{align*}
    then by a standard contradiction argument, one can deduce that
    \begin{equation}\label{Blow-up alt}
        \text{if} \quad T^{\nu}_{\text{Max}} < \infty, \quad \text{then} \quad \lim\limits_{t \nearrow T^{\nu}_{\text{Max}}} |(\zeta^{\nu}, v^{\nu})|_{Y^{s}_{\mu}} = \infty \quad \text{or} \quad  \lim\limits_{t \nearrow T^{\nu}_{\text{Max}}}\inf\limits_{x \in \R}1+\ve \zeta^{\nu} + \beta b = 0.
    \end{equation}
    This is due to the fact that if \eqref{Blow-up alt} does not hold, one can use Step $1$. and the properties of $T^{\nu}$ given by \eqref{Time condition 1} and \eqref{Time condition 2} to extend the solution beyond the maximal time.\\
    \end{remark}

    \noindent
    \underline{Step 2:} \textit{The existence time is independent of $\nu>0$.}  
    Let $ s > \frac{3}{2}$ and $ (\zeta^{\nu}, v^{\nu}) \in C([0,T_{\text{Max}}^{\nu}); Y_{\mu}^s(\mathbb{R}))$ be a solution of \eqref{system parabolic reg} with initial data $(\zeta_0, v_0) \in Y^s_{\mu}(\mathbb{R})$,  defined on its maximal time of existence and satisfying the blow-up alternative \eqref{Blow-up alt}. Moreover, let $\zeta_0$ satisfy \eqref{nonCavitation}. Then for $\tilde{N} = |(\zeta_0, v_0)|_{Y^s_{\mu}} + | b|_{H^{s+2}}$, there  exist a time 
			\begin{equation}\label{time T0}
				T = \frac{1}{\tilde{N}},
			\end{equation}
			such that $T<T_{\text{Max}}^{\nu}$ and
			\begin{equation}\label{Bound on solution - data}
				\sup\limits_{t \in [0,\frac{T}{\max\{\ve, \beta\}}]} | (\zeta^{\nu}, v^{\nu})(t)|_{Y^s_{\mu}} \lesssim |  (\zeta_0,  v_0) |_{Y^s_{\mu}}.
			\end{equation}

                Indeed, if the solution of \eqref{system parabolic reg} also satisfies estimate \eqref{Energy estimate 3/2}, then  one could combine this estimate with \eqref{Blow-up alt} and a bootstrap argument to get the result. However, to obtain the same estimate for \eqref{system parabolic reg}, one has to take into account an additional term:
                \begin{align*}
                    \frac{d}{dt} E_s(\mathbf{U}^{\nu}) \lesssim  N(s)E_s(\mathbf{U}^{\nu})
                    -
                    \nu \big{(}\mathrm{J}^{s+\alpha}\mathbf{U}^{\nu},S(\mathbf{U}^{\nu})\mathrm{J}^s \mathbf{U}^{\nu}\big{)}_{L^2},
                \end{align*}
                appearing due to the regularisation. To control this additional term, we make the decomposition
                \begin{align*}
                    \big{(}\mathrm{J}^{s+\alpha}\mathbf{U}^{\nu},S(\mathbf{U}^{\nu})\mathrm{J}^s \mathbf{U}^{\nu}\big{)}_{L^2}
                    & = 
                    |\zeta^{\nu}|_{H^{s+\frac{\alpha}{2}}}^2 
                    +
                    \big{(}  \mathscr{T}\mathrm{J}^{s+\frac{\alpha}{2}}v^{\nu}, \mathrm{J}^{s+\frac{\alpha}{2}}v^{\nu} \big{)}_{L^2}
                    +
                    \big{(} [\mathrm{J}^{\frac{\alpha}{2}},\mathscr{T}]\mathrm{J}^sv^{\nu}, \mathrm{J}^{s+\frac{\alpha}{2}}v^{\nu} \big{)}_{L^2}
                    \\
                    & = I_1 + I_2 + I_3.
                \end{align*}
                Then the two first terms will have a positive sign, where
                \begin{align*}
                    I_2\geq c(h_0) |v^{\nu}|_{X^{s+\frac{\alpha}{2}}}^2,
                \end{align*}
                arguing as we did in the proof of Proposition \ref{Inverse of T}, step $2$. On the other hand, $I_3$ is further decomposed by using integration by parts:
                \begin{align*}
                    I_3 
                    & = 
                    -\big{(} [\mathrm{J}^{\frac{\alpha}{2}},h^{\nu}]\mathrm{J}^sv^{\nu}, \mathrm{J}^{s+\frac{\alpha}{2}}v^{\nu} \big{)}_{L^2}
                    -
                    \frac{\mu}{3}
                    \big{(} [\mathrm{J}^{\frac{\alpha}{2}},(h^{\nu})^3]\mathrm{J}^s\mathrm{F}^{\frac{1}{2}}\partial_x v^{\nu}, \mathrm{J}^{s+\frac{\alpha}{2}}\mathrm{F}^{\frac{1}{2}}\partial_x v^{\nu} \big{)}_{L^2}
                    \\
                    & 
                    \hspace{0.5cm}
                    -
                    \frac{\mu}{2}
                    \big{(} [\mathrm{J}^{\frac{\alpha}{2}},(h^{\nu})^2 (\beta \partial_x b)]\mathrm{J}^s v^{\nu}, \mathrm{J}^{s+\frac{\alpha}{2}}\mathrm{F}^{\frac{1}{2}}\partial_x v^{\nu} \big{)}_{L^2}
                    +
                    \frac{\mu}{2}
                    \big{(} [\mathrm{J}^{\frac{\alpha}{2}},(h^{\nu})^2 (\beta \partial_x b)] \mathrm{J}^sv^{\nu}, \mathrm{J}^{s+\frac{\alpha}{2}} v^{\nu} \big{)}_{L^2}
                    \\
                    & 
                    \hspace{0.5cm}
                    +
                    \mu 
                    \big{(} [\mathrm{J}^{\frac{\alpha}{2}},h^{\nu}(\beta \partial_x b)^2] \mathrm{J}^s\mathrm{F}^{\frac{1}{2}}\partial_xv^{\nu}, \mathrm{J}^{s+\frac{\alpha}{2}} v^{\nu} \big{)}_{L^2}.
                \end{align*}
                We recall that $\alpha\in (1,\frac{3}{2}]$. We may therefore estimate each term by Hölder's inequality, \eqref{Commutator estimates}, Sobolev embedding, and then use Young's inequality to deduce that
                \begin{align*}
                    |I_3| 
                    & 
                    \leq 
                    N(s)|v^{\nu}|_{X_{\mu}^s}|v^{\nu}|_{X^{s+\frac{\alpha}{2}}_{\mu}}
                    \\
                    & 
                    \leq 
                    \frac{N(s)}{c_1}|v^{\nu}|_{X_{\mu}^s}^2
                    +
                    c_1N(s)|v^{\nu}|_{X^{s+\frac{\alpha}{2}}_{\mu}}^2,
                \end{align*}
                for $c_1>0$ small enough such that
                \begin{align*}
                    -\nu \big{(}\mathrm{J}^{s+\alpha}\mathbf{U}^{\nu},\mathrm{J}^s \mathbf{U}^{\nu}\big{)}_{L^2} 
                    & =-\nu( I_1 + I_2 + I_3)
                    \lesssim N(s)|v^{\nu}|_{X_{\mu}^s}^2,
                \end{align*}
                and by extension, we obtain that
                \begin{align*}
                    \frac{d}{dt} E_s(\mathbf{U}^{\nu}) \lesssim  N(s)E_s(\mathbf{U}^{\nu}),
                \end{align*}
                allowing us to conclude this step.\\

\begin{remark} \label{Energy for parabolic s=0}
    Since $\frac{\alpha}{2} \in (\frac{1}{2}, \frac{3}{4})$, one can obtain a similar estimate on $|I_3|$ in the case $\mathrm{J}^s = \mathrm{Id}$. Indeed, there holds
    \begin{align*}
        |I_3| 
                    & 
                    \leq 
                    N(r)|v^{\nu}|_{X_{\mu}^0}|v^{\nu}|_{X^{\frac{\alpha}{2}}_{\mu}},
    \end{align*}
    for $r>\frac{3}{2}$.
    \\
\end{remark}

    \noindent
    \underline{Step 3:} \textit{Existence of solutions.} We claim that for all $0\leq s'<s$ there exists a solution $(\zeta,v) \in C([0,\frac{T}{\max\{\ve, \beta\}}]; Y_{\mu}^{s'}(\mathbb{R})) \cap L^{\infty}([0,\frac{T}{\max\{\ve, \beta\}}]; Y_{\mu}^s(\mathbb{R}))$ of \eqref{W-G-N} with $T$ defined by  \eqref{time T0}. \\

    To prove the claim, we let $0<\nu'<\nu<1$ where we take $ (\zeta^{\nu'}, v^{\nu'}),  (\zeta^{\nu}, v^{\nu})$ to be two sets of solutions to system \eqref{system parabolic reg}, obtained in Step $1$, and with the same initial data. Then define the difference to be
    $$\bold{W} = (\eta, w): = (\zeta^{\nu'} - \zeta^{\nu}, v^{\nu'}- v^{\nu}),$$ 
    with $\alpha \in (1,\frac{3}{2}]$. Observe that $(\eta, w)$ satisfies a regularised version of \eqref{lin W}:
    \begin{equation*}
			\partial_t \bold{W}+  (S^{-1}M)(\bold{U}^{\nu'}) \bold{W} = \bold{F} -\nu' \mathrm{J}^{\alpha}\mathbf{W} + (\nu - \nu')\mathrm{J}^{\alpha}\mathbf{U}^{\nu},
    \end{equation*}
    where $\mathbf{F}$ is defined by
     \begin{align*}
            \bold{F} 
            & =
            -  \Big[(M_1+(S^{-1}M_2))(\bold{U}^{\nu'}) - (M_1+S^{-1}M_2)(\bold{U}^{\nu})  \Big] \partial_x\bold{U}^{\nu}
            \\
            & 
            \hspace{0.5cm}
       		- \Big{[}(S^{-1}(Q+Q_b))(\bold{U}^{\nu'}) - (S^{-1}(Q+Q_b))(\bold{U}^{\nu})  \Big]. 
    \end{align*}
    Now, we can easily extend the estimates in Proposition \ref{Compactness L2} and use Remark \ref{Energy for parabolic s=0} to deduce the estimate
    \begin{equation*}
			\frac{d}{dt} \tilde{E}_0(\mathbf{W}) 
			 \lesssim N(s)(	\tilde{E}_0(\mathbf{W}) 
                +
                (\nu -\nu')\big{(}\mathrm{J}^{\alpha}\mathbf{U}^{\nu},S(\mathbf{U}^{\nu})\mathbf{W}\big{)}_{L^2},
    \end{equation*}
    where the last term can be bounded using the definition of $\mathscr{T}$ and the fact that $\alpha\in(1,\frac{3}{2}]$. In particular, we obtain that
    \begin{equation}\label{Energy for difference}
			\frac{d}{dt} \tilde{E}_0(\mathbf{W}) 
			 \lesssim N(s)(	\tilde{E}_0(\mathbf{W}) 
                +
    (\nu - \nu')(\tilde{E}_0(\mathbf{W}) )^{\frac{1}{2}}).
    \end{equation}
    By \eqref{Bound on solution - data} and definition of $N(s)$, we have that $N(s)\lesssim 1$. Moreover, using Grönwall's inequality on \eqref{Energy for difference} and \eqref{equiv 1} yields,
    \begin{equation*}
        \sup\limits_{t\in[0,\frac{T}{\max\{\ve, \beta\}}]}
		|(\eta,w)(t)|_{Y^{0}_{\mu}} \lesssim \nu - \nu'.
    \end{equation*}
    Then using this estimate combined with interpolation we get that
    \begin{align}
         \sup\limits_{t\in[0,\frac{T}{\max\{\ve, \beta\}}]}| (\eta,w) |_{Y^{s'}_{\mu}} 
	\lesssim	
        (\nu - \nu')^{1-\frac{s'}{s}} 
	\underset{\nu \rightarrow 0}{\longrightarrow}       0,
    \end{align}
    from which we deduce that $\{(\zeta^{\nu}, v^{\nu})\}_{0<\nu\leq 1}$ defines a Cauchy sequence  in $C([0,\frac{T}{\max\{\ve, \beta\}}]; Y^{s'}_{\mu}(\mathbb{R}))\cap L^{\infty}([0,\frac{T}{\max\{\ve, \beta\}}]; Y^s_{\mu}(\mathbb{R}))$ for  $s'\in [0,s)$. Thus, we conclude that there exists a limit by completeness. \\

    \noindent
    \underline{Step 4:} \textit{The solution is bounded by the initial data.} We claim that the solution obtained in Step $3$ satisfies \eqref{bound on solution thm}. 
    
    Indeed, using the notation from the previous step, we deduce by \eqref{Bound on solution - data} that 
    $$\{(\zeta^{\nu},u^{\nu})\}_{0<\nu\leq 1} \subset C([0,\frac{T}{\max\{\ve, \beta\}}];Y^s_{\mu}(\mathbb{R})),$$
    is a bounded sequence in a reflexive Banach space. As a result, we have by Eberlein-\u{S}mulian's Theorem that $(\zeta^{\nu},v^{\nu}) \underset{\nu \rightarrow 0}{\rightharpoonup}  (\zeta,v) $ weakly in $Y^s_{\mu}(\mathbb{R})$ for a.e. $t \in [0,\frac{T}{\max\{\ve, \beta\}}]$. In particular, we have that 
    \begin{equation}\label{bound on any initial data}
    	\sup\limits_{t \in [0,\frac{T}{\max\{\ve, \beta\}}]} |(\zeta,v)|_{V^s_{\mu}} \leq\liminf \limits_{\nu \searrow 0}\sup\limits_{t \in [0,\frac{T}{\max\{\ve, \beta\}}]} |(\zeta^{\nu},v^{\nu})|_{Y^s_{\mu}} \lesssim   | (\zeta_0,v_0) |_{Y^s_{\mu}}.
    \end{equation}

    \noindent
    \underline{Step 5:} \textit{Persistence and continuity of the flow.}  There is a solution  $(\zeta, v) \in C([0,\frac{T}{\max\{\ve, \beta\}}]; V^{s}_{\mu}(\mathbb{R}))$ of \eqref{W-G-N} that depends continuously on the initial data.

    For the proof of this step, we define a new sequence of functions $(\zeta^{\delta}, v^{\delta})$ solving \eqref{W-G-N},  with mollified initial data, i.e.
    \begin{equation*}
    	(\zeta^{\delta}_0, v^{\delta}_0) = (\chi_{\delta}(\mathrm{D})\zeta_0, \chi_{\delta}(\mathrm{D})v_0) \in H^{\infty}(\mathbb{R}): = \cap_{s >0}  H^{s}(\R).
    \end{equation*}
	Reapplying the arguments of Step 1 and Step 2, combined with Proposition \ref{Rate of Decay in norm}, one can deduce that 
	$$(\zeta^{\delta}, v^{\delta}) \in C([0,\frac{T}{\max\{\ve, \beta\}}];H^{\infty}(\mathbb{R})),$$
	 satisfying \eqref{bound on any initial data}. Now that the sequence is well-defined one can again define the difference between two solutions and use Proposition  \ref{Rate of Decay in norm}, together with Proposition \ref{Compactness L2} and Remark \ref{divergence est} to deduce the result. As mentioned above, at this stage in the proof,  the argument is classical and the details can be found in e.g. \cite{BonaSmith75,LinaresPonce15,Paulsen22}.

    \end{proof}

\section{Justification of \eqref{W-G-N} as a water waves model}\label{Justification}

We now give the proof of Theorem \ref{thm Justification}.

\begin{proof}
    First, we let $s\geq 4$ and take initial data $(\zeta_0 ,  \psi_0) \in  H^{s}(\R)\times \dot{H}^{s}(\R)$ and $b\in H^{s+2}(\R)$. Then the solutions of the water waves equations \eqref{WW}:
    $$ (\zeta, \psi) \in C([0,\frac{\tilde{T}}{\max\{\ve, \beta\}}] ; H^{s}(\R)\times \dot{H}^s(\R)),$$
    are given by Theorem $4.16$ in \cite{Alvarez-SamaniegoLannes08a}. Moreover, we can define $\overline{V} \in C([0,\frac{\tilde{T}}{\max\{\ve, \beta\}}];X^s_{\mu}(\R))$.  
    Now, use Proposition \ref{Concictency of new model} and formulation \eqref{Matrix W-G-N} to say that for some $\tilde{T}>0$ the functions $\mathbf{U}= (\zeta , \overline{V})^T$ solves
    \begin{equation*}
        \partial_t \mathbf{U} + (M_1   + (S^{-1}M_2))(\mathbf{U}) \partial_x \mathbf{U} + (S^{-1}Q)(\mathbf{U}) + (S^{-1}Q_b)(\mathbf{U})= \mu^2( \ve + \beta) \mathbf{R},
    \end{equation*}
    for any $t \in [0,\frac{\tilde T}{\max\{\ve, \beta\}}]$ and with $S, M_1,M_2,Q,Q_b$ defined as in \eqref{Matrix W-G-N} and $\mathbf{R}  = (0,R) \in L^{\infty}([0,\frac{\tilde T}{\max\{\ve, \beta\}}] ; X_{\mu}^{r}(\R)) $  for some $r\in \N$.

    The next step is to let $v^{\text{\tiny   WGN} }_0 = \overline{V}|_{t=0} \in X^s_{\mu}(\R)$ and then use Theorem \ref{W-P W-G-N} deduce the existence of $T>0$ such that 
    $$\mathbf{U}^{\text{\tiny   WGN} } = (\zeta^{\text{\tiny   WGN} }, v^{\text{\tiny   WGN} } ) \in C([0,\frac{T}{\max\{\ve, \beta\}}] ; Y_{\mu}^s(\R)),$$
    solves  system \eqref{Matrix W-G-N}:
    \begin{equation*} 
    \partial_t \mathbf{U}^{\text{\tiny   WGN} } + (M_1   + (S^{-1}M_2))(\mathbf{U}^{\text{\tiny   WGN} }) \partial_x \mathbf{U}^{\text{\tiny   WGN} }+ (S^{-1}Q)(\mathbf{U}^{\text{\tiny   WGN} }) + (S^{-1}Q_b)(\mathbf{U}^{\text{\tiny   WGN} })=\mathbf{0},
    \end{equation*}
    for any $t \in [0,\frac{T}{\max\{\ve, \beta\}}]$. Consequently, taking the difference between the two solutions 
    $$\mathbf{W} = (\eta, w)^T = \mathbf{U} - \mathbf{U}^{\text{\tiny   WGN} },$$
    we obtain the following system

\begin{equation}\label{lin W}
		\partial_t \bold{W} +  (M_1 + S^{-1}M_2)(\bold{U})\partial_x \bold{W}= \tilde{\bold{F}}, 
	\end{equation}
	similar to \eqref{lin W}  and with
        \begin{align*}
            \tilde{\bold{F}} 
            & =
            -  \Big[(M_1+(S^{-1}M_2))(\bold{U}) - (M_1+S^{-1}M_2)(\bold{U}^{\text{\tiny   WGN} })  \Big] \partial_x\bold{U}^{\text{\tiny   WGN} }
            \\
            & 
            \hspace{0.5cm}
       		- \Big{[}(S^{-1}(Q+Q_b))(\bold{U}) - (S^{-1}(Q+Q_b))(\bold{U}^{\text{\tiny   WGN} })  \Big]  
           +
           \mu^2( \ve + \beta) \mathbf{R}
           \\
           & = 
           \mathbf{F}+\mu^2( \ve + \beta) \mathbf{R},
        \end{align*}
        for any $t \in [0,\frac{\min\{\tilde{T},T\}}{\max\{\ve, \beta\}}]$. Then using the estimates \eqref{Energy 2},\eqref{equiv 2}, and Remark \ref{divergence est} we deduce  for $r>\frac{3}{2}$ that
        \begin{align*}
            \frac{d}{dt} \tilde{E}_r(\bold{W}) 
            & \lesssim  
            |\big(\mathrm{J}^r\tilde{\bold{F}}, S(\mathbf{U} ) \mathrm{J}^r \bold{W}\big)_{L^2}| +  N(r)  \tilde{E}_r(\mathbf{W})
            \\
            & 
            \lesssim \mu^2( \ve + \beta)|\big(\mathrm{J}^r R, \mathscr{T}[h,\beta b] \mathrm{J}^r w\big)_{L^2}|
            +
            N(r+1)  \tilde{E}_r(\mathbf{W}).
        \end{align*}
        However, by definition of $\mathscr{T}[h,\beta b] $  and using integration by parts, Hölder's inequality and the Sobolev embedding we easily obtain the estimate
        \begin{align*}
            |\big(\mathrm{J}^r R, \mathscr{T}[h,\beta b] \mathrm{J}^r w\big)_{L^2}|
            \lesssim   N(r)|R|_{X^r_{\mu}} |w|_{X^r_{\mu}}.
        \end{align*}
        Gathering these estimates, together with \eqref{equiv 2}, we observe
        \begin{equation*}
             \frac{d}{dt} \tilde{E}_r(\bold{W}) 
            \lesssim \mu^2( \ve + \beta)|R|_{X^r_{\mu}}(\tilde{E}_r(\mathbf{W}))^{\frac{1}{2}} +
            N(r+1)  \tilde{E}_r(\mathbf{W}).
        \end{equation*}
        Now, a simple application of Grönwall's inequality and  \eqref{equiv 2} yields
        \begin{equation} \label{convergence est in proof}
            |(\eta, w)|_{Y^{r}_{\mu}} \lesssim 
            \mu^2( \ve + \beta)t \: |R|_{X^r_{\mu}} e^{N(r+1)t}.
        \end{equation}
        Finally, to conclude we use that  $Y^{r}_{\mu}(\R)\subset H^{r}(\R)  \hookrightarrow L^{\infty}(\R)$ for $r > \frac{3}{2}$, and \eqref{convergence est in proof} to get 
        \begin{align*}
            | \mathbf{U}  - \mathbf{U}^{\text{\tiny   WGN} } |_{L^{\infty}([0,t]; \R)} 
            & \lesssim |(\eta, w)|_{L^{\infty}([0,t];Y^{r}_{\mu}(\R))} 
            \\
            & \lesssim   \mu^2( \ve + \beta)t \:  |R|_{X_{\mu}^{r}} e^{N(r+1)t}.
        \end{align*}
        To conclude, we let $s$ be large enough such that $r+1<s$  to get that
        \begin{align*}
             | \mathbf{U}  - \mathbf{U}^{\text{\tiny   WGN} } |_{L^{\infty}([0,t]; \R)} 
            \lesssim \mu^2( \ve + \beta)t,
        \end{align*}
        for all $t \in [0,\frac{\min\{\tilde{T},T\}}{\max\{\ve, \beta\}}]$.

\end{proof}

	\section*{Acknowledgements}
	 
	This research was supported by a Trond Mohn Foundation grant. It was also supported by the Faculty Development Competitive Research Grants Program 2022-2024 of Nazarbayev University: Nonlinear Partial Differential Equations in Material Science, Ref. 11022021FD2929. 
 

    \bibliographystyle{plain}
    \bibliography{Biblio}

\def\cprime{$'$}
\begin{thebibliography}{10}

\bibitem{Alvarez-SamaniegoLannes08a}
Borys Alvarez-Samaniego and David Lannes.
\newblock A {N}ash-{M}oser theorem for singular evolution equations.
  {A}pplication to the {S}erre and {G}reen-{N}aghdi equations.
\newblock {\em Indiana Univ. Math. J.}, 57(1):97--131, 2008.

\bibitem{Arnesen16}
Mathias~Nikolai Arnesen.
\newblock Existence of solitary-wave solutions to nonlocal equations.
\newblock {\em Discrete Contin. Dyn. Syst.}, 36(7):3483--3510, 2016.

\bibitem{BonaSmith75}
J.~L. Bona and R.~Smith.
\newblock The initial-value problem for the {K}orteweg-de {V}ries equation.
\newblock {\em Philos. Trans. Roy. Soc. London Ser. A}, 278(1287):555--601,
  1975.

\bibitem{Carter18}
John~D. Carter.
\newblock Bidirectional equations as models of waves on shallow water.
\newblock {\em Wave Motion}, 82:51--61, 2018.

\bibitem{ChazelBenoit09}
F.~Chazel, M.~Benoit, A.~Ern, and S.~Piperno.
\newblock A double-layer {B}oussinesq-type model for highly nonlinear and
  dispersive waves.
\newblock {\em Proc. R. Soc. Lond. Ser. A Math. Phys. Eng. Sci.},
  465(2108):2319--2346, 2009.

\bibitem{ChazelLannes11}
F.~Chazel, D.~Lannes, and F.~Marche.
\newblock Numerical simulation of strongly nonlinear and dispersive waves using
  a {G}reen-{N}aghdi model.
\newblock {\em J. Sci. Comput.}, 48(1-3):105--116, 2011.

\bibitem{Dingemans94}
M.~W. Dingemans.
\newblock Comparison of computations with {B}oussinesq-like models and
  laboratory measurements.
\newblock Technical report, Deltares, Delft, 1994.

\bibitem{DinvayNilsson21}
E.~Dinvay and D.~Nilsson.
\newblock Solitary wave solutions of a {W}hitham-{B}oussinesq system.
\newblock {\em Nonlinear Anal. Real World Appl.}, 60:Paper No. 103280, 24,
  2021.

\bibitem{Dinvay19}
Evgueni Dinvay.
\newblock On well-posedness of a dispersive system of the
  {W}hitham–{B}oussinesq type.
\newblock {\em Applied Mathematics Letters}, 88:13--20, 2019.

\bibitem{Dinvay20}
Evgueni Dinvay.
\newblock Well-posedness for a {W}hitham-{B}oussinesq system with surface
  tension.
\newblock {\em Math. Phys. Anal. Geom.}, 23(2):Paper No. 23, 27, 2020.

\bibitem{DinvayDutykhKalisch19}
Evgueni Dinvay, Denys Dutykh, and Henrik Kalisch.
\newblock A comparative study of bi-directional systems.
\newblock {\em Appl. Numer. Math.}, 141:248--262, 2019.

\bibitem{DinvaySelbergTesfahun19}
Evgueni Dinvay, Sigmund Selberg, and Achenef Tesfahun.
\newblock Well-posedness for a dispersive system of the {W}hitham-{B}oussinesq
  type.
\newblock arXiv:1902.09438v3, 2019.

\bibitem{DucheneIsrawiTalhouk16}
V.~Duch\^ene, S.~Israwi, and R.~Talhouk.
\newblock A new class of two-layer {G}reen-{N}aghdi systems with improved
  frequency dispersion.
\newblock {\em Stud. Appl. Math.}, 137(3):356--415, 2016.

\bibitem{DucheneMMWW21}
Vincent Duch\^{e}ne.
\newblock Many models for water waves.
\newblock A unified theoretical approach. Open Math Notes, OMN:202109.111309,
  2021.

\bibitem{DucheneIsrawi18}
Vincent Duch\^{e}ne and Samer Israwi.
\newblock Well-posedness of the {G}reen-{N}aghdi and {B}oussinesq-{P}eregrine
  systems.
\newblock {\em Ann. Math. Blaise Pascal}, 25(1):21--74, 2018.

\bibitem{DucheneNilssonWahlen18}
Vincent Duch\^ene, Dag Nilsson, and Erik Wahl\'en.
\newblock Solitary {W}ave {S}olutions to a {C}lass of {M}odified
  {G}reen--{N}aghdi {S}ystems.
\newblock {\em J. Math. Fluid Mech.}, 20(3):1059--1091, 2018.

\bibitem{EhrnstromGroves12}
Mats Ehrnstr\"{o}m, Mark~D. Groves, and Erik Wahl\'{e}n.
\newblock On the existence and stability of solitary-wave solutions to a class
  of evolution equations of {W}hitham type.
\newblock {\em Nonlinearity}, 25(10):2903--2936, 2012.

\bibitem{EhrnstromJohnson19}
Mats Ehrnstr\"{o}m, Mathew~A. Johnson, and Kyle~M. Claassen.
\newblock Existence of a highest wave in a fully dispersive two-way shallow
  water model.
\newblock {\em Arch. Ration. Mech. Anal.}, 231(3):1635--1673, 2019.

\bibitem{EhrstromKalisch09}
Mats Ehrnstr\"{o}m and Henrik Kalisch.
\newblock Traveling waves for the {W}hitham equation.
\newblock {\em Differential Integral Equations}, 22(11-12):1193--1210, 2009.

\bibitem{EhrnstromWahlen19}
Mats Ehrnstr\"{o}m and Erik Wahl\'{e}n.
\newblock On {W}hitham's conjecture of a highest cusped wave for a nonlocal
  dispersive equation.
\newblock {\em Ann. Inst. H. Poincar\'{e} Anal. Non Lin\'{e}aire},
  36(6):1603--1637, 2019.

\bibitem{Emerald21}
Louis Emerald.
\newblock Rigorous derivation from the water waves equations of some full
  dispersion shallow water models.
\newblock {\em SIAM J. Math. Anal.}, 53(4):3772--3800, 2021.

\bibitem{Emerald21b}
Louis Emerald.
\newblock Rigorous derivation of the {W}hitham equations from the water waves
  equations in the shallow water regime.
\newblock {\em Nonlinearity}, 34(11):7470--7509, 2021.

\bibitem{Emerald22}
Louis Emerald.
\newblock Local well-posedness result for a class of non-local quasi-linear
  systems and its application to the justification of whitham-boussinesq
  systems, 2022.

\bibitem{GobbiKirby00}
Maur\'{\i}cio~F. Gobbi, James~T. Kirby, and Ge~Wei.
\newblock A fully nonlinear {B}oussinesq model for surface waves. {II}.
  {E}xtension to {$O(kh)^4$}.
\newblock {\em J. Fluid Mech.}, 405:181--210, 2000.

\bibitem{Hur17}
Vera~Mikyoung Hur.
\newblock Wave breaking in the {W}hitham equation.
\newblock {\em Adv. Math.}, 317:410--437, 2017.

\bibitem{HurJohnson15}
Vera~Mikyoung Hur and Mathew~A. Johnson.
\newblock Modulational instability in the {W}hitham equation with surface
  tension and vorticity.
\newblock {\em Nonlinear Anal.}, 129:104--118, 2015.

\bibitem{HurPandey16}
Vera~Mikyoung Hur and Ashish Pandey.
\newblock Modulational instability in a full-dispersion shallow water model.
\newblock {\em Studies in Applied Mathematics}, 08 2016.

\bibitem{Samer11}
Samer Israwi.
\newblock Large time existence for 1{D} {G}reen-{N}aghdi equations.
\newblock {\em Nonlinear Anal.}, 74(1):81--93, 2011.

\bibitem{JohnsonDouglas20}
Mathew~A. Johnson and J.~Douglas Wright.
\newblock Generalized solitary waves in the gravity-capillary {W}hitham
  equation.
\newblock {\em Stud. Appl. Math.}, 144(1):102--130, 2020.

\bibitem{KenigPonceVega93}
Carlos~E. Kenig, Gustavo Ponce, and Luis Vega.
\newblock Well-posedness and scattering results for the generalized
  {K}orteweg-de {V}ries equation via the contraction principle.
\newblock {\em Comm. Pure Appl. Math.}, 46(4):527--620, 1993.

\bibitem{WWP}
Lannes.
\newblock {\em The water waves problem: mathematical analysis and asymptotics}.
\newblock Mathematical surveys and monographs; volume 188. American
  Mathematical Society, Rhode Island, United-States, 2013.

\bibitem{LinaresPonce15}
Felipe Linares and Gustavo Ponce.
\newblock {\em Introduction to nonlinear dispersive equations}.
\newblock Universitext. Springer, New York, second edition, 2015.

\bibitem{MadsenBingham02}
P.~A. Madsen, H.~B. Bingham, and Hua Liu.
\newblock A new {B}oussinesq method for fully nonlinear waves from shallow to
  deep water.
\newblock {\em J. Fluid Mech.}, 462:1--30, 2002.

\bibitem{NilssonWang19}
Dag Nilsson and Yuexun Wang.
\newblock Solitary wave solutions to a class of {W}hitham-{B}oussinesq systems.
\newblock {\em Zeitschrift für angewandte Mathematik und Physik}, 70(70),
  2019.

\bibitem{Pandey19}
Ashish Pandey.
\newblock The effects of surface tension on modulational instability in
  full-dispersion water-wave models.
\newblock {\em European Journal of Mechanics - B/Fluids}, 77:177--182, 2019.

\bibitem{Paulsen22}
Martin~Oen Paulsen.
\newblock Long time well-posedness of {W}hitham-{B}oussinesq systems.
\newblock {\em Nonlinearity}, 35(12):6284--6348, 2022.

\bibitem{SanfordJodamaKalisch14}
Nathan Sanford, Keri Kodama, John~D. Carter, and Henrik Kalisch.
\newblock Stability of traveling wave solutions to the {W}hitham equation.
\newblock {\em Phys. Lett. A}, 378(30-31):2100--2107, 2014.

\bibitem{SautWang22}
Jean-Claude Saut and Yuexun Wang.
\newblock The wave breaking for {W}hitham-type equations revisited.
\newblock {\em SIAM J. Math. Anal.}, 54(2):2295--2319, 2022.

\bibitem{StefanovWright20}
Atanas Stefanov and J.~Douglas Wright.
\newblock Small amplitude traveling waves in the full-dispersion {W}hitham
  equation.
\newblock {\em J. Dynam. Differential Equations}, 32(1):85--99, 2020.

\bibitem{TruongWahlen22}
Tien Truong, Erik Wahl\'{e}n, and Miles~H. Wheeler.
\newblock Global bifurcation of solitary waves for the {W}hitham equation.
\newblock {\em Math. Ann.}, 383(3-4):1521--1565, 2022.

\bibitem{VargasMaganaPanayotaros16}
R.M. Vargas-Magaña and P.~Panayotaros.
\newblock A {W}hitham–{B}oussinesq long-wave model for variable topography.
\newblock {\em Wave Motion}, 65:156--174, 2016.

\bibitem{WeiKirbyGrilliEtAl95}
Ge~Wei, James~T. Kirby, Stephan~T. Grilli, and Ravishankar Subramanya.
\newblock A fully nonlinear {B}oussinesq model for surface waves. {I}. {H}ighly
  nonlinear unsteady waves.
\newblock {\em J. Fluid Mech.}, 294:71--92, 1995.

\bibitem{Whitham67}
G.~B. Whitham.
\newblock Variational methods and applications to water waves.
\newblock {\em Royal Society, London}, 299:6--25, 1967.

\end{thebibliography}


\begin{thebibliography}{99}{
			\small
			\renewcommand{\baselinestretch}{0.95}
			\setlength{\itemsep}{1.5mm}


			\bibitem{alvarez2008large} B. Alvarez-Samaniego and D. Lannes,
			{\em Large time existence for 3D water-waves and asymptotics}, 
			Inventiones mathematicae, \textbf{171}, (2008), no. 3, 485--541.
	
			\bibitem{BonaSmith1975} J. Bona and R. Smith,  
			\emph{The initial-value problem for the Korteweg-de Vries equation}, 
			Philos. Trans. Roy. Soc. London Ser. A,  \textbf{278}, (1975), no. 1287,  555--601.




        \bibitem{Dawson_M_P_08} L. Dawson, H. McGahagan and G. Ponce,  
			\emph{On the decay properties of solutions to a class
of Schrödinger equations}, 
			Proc. Amer. Math. Soc., \textbf{136} (2008), 2081-2090.
  
			\bibitem{V.D.2022} V. Duchêne, {\em Many models for water waves}, arXiv preprint arXiv:2203.11340, (2022).




                \bibitem{Israwi_GN_WP_2011} S.  Israwi,  
			\emph{Large time existence for 1D Green-Naghdi equations}, Nonlinear Anal.   \textbf{74}, (2011), no. 1, 81–93.


	       \bibitem{KalischPilod2019} H. Kalisch and D. Pilod,
			\emph{On the local well-posedness for a full-dispersion Boussinesq system with surface tension},
			Proc. Amer. Math. Soc., \textbf{147}, (2019), no. 6, 2545--2559.
            
			
   
			\bibitem{KatoPonce1988} T. Kato and G. Ponce, 
			\emph{Commutator estimates and the Euler and Navier-Stokes equations},  
			Comm. Pure Appl. Math., \textbf{41}, (1988), no. 7, 891–907.

   	        \bibitem{KenigPonceVega1993}  C. Kenig, G. Ponce, and L. Vega, 
			\emph{Well-posedness and scattering results for the generalized Korteweg-de Vries equation via the contraction principle},  Comm. Pure Appl. Math., \textbf{46}, no. 4, (1993), 527--620.

                \bibitem{Lannes_Book2013} D. Lannes, 
			\emph{The water waves problem: Mathematical analysis and asymptotics}, 
			Mathematical Surveys and Monographs, \textbf{188}, American Mathematical Society, Providence, RI, 2013, ISBN: 978-0-8218-9470-5.


   
		\bibitem{LinaresPonce2014} F. Linares and G. Ponce, 
			\emph{Introduction to nonlinear dispersive equations}, Second edition, Universitext. Springer, New York, 2015, ISBN: 978-1-4939-2180-5; 978-1-4939-2181-2.
		
		\bibitem{Paulsen22} M. O. Paulsen, 
			\emph{Long time well-posedness of
			Whitham–Boussinesq systems}, Nonlinearity 35 (2022), no. 12, 6284–6348.
			
			
		}
	\end{thebibliography}

\end{document}